
\documentclass
{amsproc}
\usepackage{amsmath}
\usepackage{amssymb}
\usepackage{graphicx}
\usepackage{multirow}
\usepackage{makecell}
\usepackage{array}
\DeclareGraphicsExtensions{.png,.pdf,.eps}
\usepackage[all,2cell]{xy}
\usepackage{tikz}
\usepackage{pgf}
\usepackage[customcolors]{hf-tikz}
\usepackage{enumerate}
\usepackage{mathdots}
\usepackage{enumitem}

\usetikzlibrary{cd}
\usepackage{tikz-cd}
\usepackage{hyperref}
\usepackage{ytableau}
\usepackage{pythonhighlight}

\usepackage{pifont} 
%



\newtheorem{theorem}{Theorem}[section]
\newtheorem{lemma}[theorem]{Lemma}
\newtheorem{corollary}[theorem]{Corollary}
\newtheorem{proposition}[theorem]{Proposition}

\theoremstyle{definition}

\newtheorem{definition}[theorem]{Definition}

\newtheorem{remark}[theorem]{Remark}

\newcolumntype{C}{>{$}c<{$}}

\setcounter{tocdepth}{1}
\setcounter{secnumdepth}{2}

\renewcommand{\arraystretch}{1.2}


\title[Singular del Pezzo surfaces and isotropic flag varieties]{Singular del Pezzo surfaces and isotropic flag varieties} 

\author[Bianco]{Michele Bianco}
\address{Dipartimento di Matematica, Universit\`a degli Studi di Trento, via
Sommarive 14 I-38123, Trento (TN), Italy}
\email{michele.bianco@unitn.it}

\author[Sol\'a Conde]{Luis E. Sol\'a Conde}
\address{Dipartimento di Matematica, Universit\`a degli Studi di Trento, via Sommarive 14 I-38123, Trento (TN), Italy}
\email{eduardo.solaconde@unitn.it}

\subjclass[2020]{Primary 14L30, 14E30; Secondary 14J30, 14L24}
\thanks{}

\usepackage[textsize=tiny]{todonotes}

\usepackage{enumitem}



\newcommand\ignore[1]{}






\DeclareMathOperator{\HH}{H}
\def\conv{\operatorname{Conv}}

\def\C{{\mathbb C}}
\def\F{{\mathbb F}}
\def\G{{\mathbb G}}

\def\P{{\mathbb P}}
\def\Q{{\mathbb Q}}
\def\R{{\mathbb R}}
\def\Z{{\mathbb Z}}

\def\cC{{\mathcal C}}

\def\cG{{\mathcal{G}}}

\def\cI{{\mathcal I}}
\def\cJ{{\mathcal J}}

\def\cN{{\mathcal{N}}}
\def\cO{{\mathcal{O}}}

\def\cQ{{\mathcal{Q}}}

\def\cY{{\mathcal Y}}
\def\Q{{\mathbb{Q}}}
\def\G{{\mathbb{G}}}

\def\fn{{\mathfrak n}}

\def\fg{{\mathfrak g}}
\def\fh{{\mathfrak h}}

\def\fb{{\mathfrak b}}

\def\fsl{\mathfrak {sl}}

\def\fsp{\mathfrak {sp}}
\def\sl{\operatorname{SL}}

\def\operatorname#1{\mathop{\rm #1}\nolimits}

\def\DA{{\rm A}}

\def\DC{{\rm C}}
\def\DD{{\rm D}}

\def\Proj{\operatorname{Proj}}
\def\Aut{\operatorname{Aut}}

\def\Bl{\operatorname{Bl}}
\def\Bir{\operatorname{Bir}}

\def\Hom{\operatorname{Hom}}

\def\Pic{\operatorname{Pic}}
\def\Hom{\operatorname{Hom}}

\def\diag{\operatorname{diag}}

\def\deg{\operatorname{deg}}

\def\conj{\operatorname{conj}}

\def\NE{\operatorname{NE}}

\def\Nef{{\operatorname{Nef}}}

\def\Nu{{\operatorname{N_1}}}
\def\NU{{\operatorname{N^1}}}

\newcommand{\cNE}[1]{\overline{\NE}(#1)}

\def\PGL{\operatorname{PGL}}

\def\Sp{\operatorname{Sp}}
\def\PSP{\operatorname{PSp}}

\DeclareMathOperator{\No}{N}


\def\GZ{\mathcal{G}\! Z}
\def\GU{\mathcal{G} U}

\def\LZ{\mathcal{L} Z}



\def\ss{\operatorname{ss}}

\newcommand{\pb}{\ar@{}[dr]|{\text{\pigpenfont J}}}

\makeatletter
\newcommand{\xleftrightarrow}[2][]{\ext@arrow 3359\leftrightarrowfill@{#1}{#2}}
\makeatother
\newcommand{\xdasharrow}[2][->]{
\tikz[baseline=-\the\dimexpr\fontdimen22\textfont2\relax]{
\node[anchor=south,font=\scriptsize, inner ysep=1.5pt,outer xsep=2.2pt](x){#2};
\draw[shorten <=3.4pt,shorten >=3.4pt,dashed,#1](x.south west)--(x.south east);
}}


\newcommand\lra{\longrightarrow}




\def\Mo{\operatorname{\hspace{0cm}M}}



\begin{document}
\begin{abstract}
We compute the Chow quotient of the complete flag variety of isotropic subspaces of a four dimensional complex vector space with respect to a skew/symmetric form, and show that it is a singular del Pezzo surface of degree four. 
\end{abstract}
\maketitle

\tableofcontents


\section{Introduction}\label{sec:intro}

Geometric invariant theory and its variants provide powerful tools for constructing and studying moduli spaces arising as quotients of algebraic varieties by group actions. Among these constructions, the Chow quotient, introduced by Kapranov, occupies a distinguished place. Unlike GIT quotients, Chow quotients are canonical -- that is independent of a choice of linearization -- and retain all degenerations of general orbit closures. As a consequence, they typically exhibit more complicated singularities, while encoding finer information on boundary phenomena and birational geometry (see, for instance, \cite{Kap, KSZ,AB04})

In recent years, Chow quotients of flag varieties by maximal tori have attracted renewed attention, motivated both by their intrinsic geometry and by their connections with birational geometry, representation theory, and combinatorics. In our recent work 
\cite{BOS2}, we initiated a detailed study of these quotients in low rank, computing explicitly the Chow quotient of the complete flag variety of type $\DA_3$ (parametrizing complete flags in $\P^3$) and describing its birational geometry.

The purpose of the present paper is to extend this analysis to the symplectic setting, focusing on the complete flag variety $F$ of type $\DC_2$. Concretely, this $4$-dimensional variety parametrizes isotropic flags in $\C^4$ with respect to a fixed symplectic form. 
We study the (normalized) Chow quotient $X$ of $F$ by the action of a maximal torus $H$ of $\PSP(4)$. 

The algebraic cycle in $F$ corresponding to a general point in $X$ is a toric surface, which is the minimal resolution of a toric log del Pezzo surface of Picard number two and degree four, having four singularities of type $\DA_1$ (cf. Proposition \ref{prop:genorb}).  A key ingredient in our approach is a detailed analysis of local degenerations of the closure of a general torus orbit in the symplectic flag variety, in the spirit of the toric setting considered in \cite{KSZ,AB04}. By studying these degenerations explicitly, we gain precise control over the boundary of the Chow quotient, that is the locus parametrizing reducible cycles. One of our main results shows that this locus is a simple normal crossing divisor, given by the union of eight rational curves, whose incidence relations reflect the combinatorics of the torus action.

Moreover, we provide an explicit description of the general reducible cycle parametrized by each of these curves, identifying their irreducible components and intersection behavior inside the flag variety. Such reducible cycles naturally arise in Chow-type compactifications and are a primary source of singularities in the quotient, as already observed in related moduli-theoretic contexts (cf. \cite{Ale02}). This analysis plays a crucial role in determining the singularities of the Chow quotient and underlies our subsequent study of its birational geometry.\\

Using this description, our main result is the following. 

\begin{theorem}\label{thm:main}
Let $F$ be the complete flag variety of vector subspaces of $\C^4$, isotropic with respect to a nondegenerate skew-symmetric form, endowed with the natural action of the complex torus $H\subset\PSP(4)$ of homothety classes of diagonal matrices. Then the normalized Chow quotient $X$ of $F$ by the action of $H$ is a singular del Pezzo surface of degree $4$ with two singularities of type $A_1$. 
\end{theorem}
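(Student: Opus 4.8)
The plan is to reconstruct $X$ as a concrete normal projective surface from the two ingredients already in hand: the generic fibre of the quotient, which by Proposition~\ref{prop:genorb} is a fixed smooth toric surface with octagonal weight polytope, and the boundary $\partial X$ parametrizing reducible cycles, which is a simple normal crossing union of eight rational curves with known incidences. The $H$-action on $F$ has isolated fixed points indexed by the Weyl group $W(\DC_2)\cong D_4$, and this group acts on $\Chow(F)$ preserving $X$ and permuting the eight boundary curves; I would use this symmetry systematically to cut down the casework. The statement then breaks into three tasks: (i) pin down the local structure of $X$ and isolate its singular points; (ii) compute the Picard number, equivalently $K_X^2$; and (iii) prove that $-K_X$ is ample.

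For (i), note that the interior $X^{\circ}=X\setminus\partial X$ is the geometric quotient $U/H$ of the open locus $U\subset F$ of points with irreducible two-dimensional orbit closure; since $F$ is smooth and $U$ is $H$-invariant, $X^{\circ}$ is smooth wherever $H$ acts with trivial stabilizer, and its only singularities are cyclic quotient singularities at the images of orbits with nontrivial finite isotropy. A direct analysis of the $2$-torsion of the symplectic diagonal torus shows that the $H$-action on $U$ is free outside exactly two special two-dimensional orbit closures, along each of which the stabilizer is a $\ZZ/2$ acting by $-1$ on the normal slice; these two orbits map to two points of $X^{\circ}$ at which $X$ acquires an $A_1$ singularity, and $X$ is smooth everywhere else -- in particular the boundary $\partial X$, being simple normal crossing, avoids them. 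This localizes all singularities to two $A_1$ points in the interior, so $X$ has canonical (du Val) singularities and is Gorenstein, and the minimal resolution $\pi\colon\widetilde X\to X$ is crepant, replacing each singular point by a single $(-2)$-curve disjoint from the strict transforms of the boundary curves.

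For (ii) and (iii), $\widetilde X$ is a smooth rational surface, so $K_{\widetilde X}^2=10-\rho(\widetilde X)$; I would compute $\rho(\widetilde X)=6$ (equivalently $\rho(X)=4$) from the stratification of $X$ into the interior and the eight boundary curves together with the $W(\DC_2)$-action, which yields $K_X^2=K_{\widetilde X}^2=4$ by crepancy. Ampleness of $-K_X$ I would obtain by checking that $-K_{\widetilde X}$ is nef and big: by the structure of the curve configuration it suffices to verify $-K_{\widetilde X}\cdot C\ge 0$ on the eight boundary curves and the two exceptional curves, and that the only curves on which $-K_{\widetilde X}$ is trivial are exactly the two exceptional $(-2)$-curves contracted by $\pi$; then $-K_X$ is ample by Nakai--Moishezon, so $X$ is a Gorenstein del Pezzo surface of degree $4$ with precisely two $A_1$ singularities, which the classification of such surfaces identifies uniquely. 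The main obstacle is not this final bookkeeping but step (i) together with the global positivity in (iii): extracting the correct \'etale-local models of the degenerations at all boundary strata, including the triple points, and verifying that the resulting global intersection data makes $-K$ nef, is exactly the delicate input supplied by the degeneration analysis, and it is where an error would most easily hide.
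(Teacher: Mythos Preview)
Your step (i) contains a concrete error that undermines the rest of the argument: the two $A_1$ singularities of $X$ do \emph{not} lie in the interior $X^\circ$ but on the boundary, precisely at the points $B_{23}\cap B_{41}$ and $B_{12}\cap B_{34}$. This is forced by the intersection numbers---each $B_{ij}$ has $B_{ij}^2=-\tfrac12$ and $B_{23}\cdot B_{41}=B_{12}\cdot B_{34}=\tfrac12$, the half-integers signalling that these curves pass through and meet at the singular points---and it is confirmed by the anticanonical model, where $X\subset\P^4$ is the intersection of two quadric cones whose vertices are exactly the images of those two boundary intersections. Your proposed mechanism, a $\ZZ/2$ isotropy of $H$ along two special two-dimensional orbits in $U$, therefore cannot be what produces the singularities, and the claim that the boundary ``avoids them'' is false. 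Since your Picard computation, the crepancy argument, and the nefness check are all built on the picture of two interior $(-2)$-curves disjoint from the boundary, none of these steps goes through as written.

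The paper's route is quite different and avoids any direct \'etale-local analysis of the Chow quotient along the boundary strata. Each $H$-invariant affine chart $F_\sigma\simeq\fn$ has a projective combinatorial (toric) quotient $X_\sigma$, which turns out to be a weighted blowup of $\P^2$ already carrying one $A_1$ point on its exceptional curve; one then shows that $X$ is the inverse limit of the $X_\sigma$ under the natural birational gluings $\jmath_\sigma$. The Weyl group is represented explicitly in $\Bir(\P^2)$ by Cremona transformations, and a concrete surface $X'$---$\P^2$ blown up once smoothly and twice with weights---is shown to satisfy the same universal property, giving $X\simeq X'$. All the invariants (Picard number $4$, two $A_1$'s on the weighted exceptional curves, $K_X^2=4$, ampleness of $-K_X$ via Nakai--Moishezon against the eight boundary curves) are then read off from this explicit birational model and its intersection matrix.
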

In particular, $X$ is a Mori Dream Space by \cite{BCHM}. We then study its nef and effective cones, and its anticanonical map, which is an embedding that maps $X$ to a quartic surface in $\P^4$. We also determine the automorphism group of the Chow quotient, showing that it coincides with the Weyl group of $\PSP(4)$.

This work underscores the usefulness of Chow quotients as a framework for understanding degenerations of orbits in flag varieties, from which Chow quotients of general rational homogeneous manifolds should follow. More generally, it supports the view that Chow quotients serve as natural ``master spaces'', dominating GIT quotients while retaining subtle boundary information relevant to birational geometry \cite{Kap,KSZ}.\\

\noindent{\bf Outline.}
We follow the lines of argumentation of \cite{BOS2}, adapted to our particular situation. After a section of preliminaries on torus actions and on the complete flag $F$ of type $\DC_2$,  we recall some known facts on its fixed points by the action of the maximal torus $H$, and introduce some particular $\C^*$-actions, that later on will play a central role in the study of the boundary divisors of the Chow quotient (cf. Section \ref{ssec:fixed}). Section \ref{sec:local} is devoted to the study of the combinatorial quotients of $H$-invariant affine charts in $F$, which contain information about the local degeneration of the closure of the general $H$-orbit in $F$. In Section \ref{sec:mainproof} we prove the main statement of the paper, showing how to construct the Chow quotient $X$ as an inverse limit of combinatorial quotients. We conclude the paper with a section devoted to a description of the geometry of $X$.

\noindent{\bf Acknowledgements.} We would like to thank Gianluca Occhetta for interesting discussions on the topic.

\section{Preliminaries}\label{sec:prelim}

\subsection{Torus actions and associated quotients}\label{ssec:prelimtorus}

In this section we quickly review some general result about torus actions on smooth projective varieties \cite{BWW} and their quotients \cite{KSZ,Kap,MFK}.
	
Consider an algebraic torus $H$ of rank $r$ effectively acting on a (complex) smooth projective variety $Z$. Denote by $\Mo(H)\simeq\Z^r$ its character lattice and by $\No(H)$ its dual lattice. Given an ample line bundle $L$ on $Z$, we get the {\it polarized pair} $(Z,L)$. We choose a {\it linearization} of $L$ by $H$, that is an $H$-action on $L$ that is linear along fibers and makes the projection $Z\to L$ an $H$-equivariant map. In this sense, the torus $H$ acts on the polarized pair $(Z,L)$. Linearizations always exist under these assumptions \cite[Proposition 2.4]{KKLV} and two linearizations of $L$ differ by a character of $H$ \cite[Lemma 2.2]{KKV}. 
	
\subsubsection{Polytopes of fixed points and sections.} In the above setting, the {\it fixed point locus} decomposes in smooth connected components $Z^H=\bigsqcup_{Y\in\cY} Y$ \cite[Theorem 1]{IVERSEN}. The induced $H$-action on the fiber $L_z$ of every $z\in Z^H$ naturally gives a {\it weight} $\mu_L(z)\in\Mo(H)$. Points in the same fixed point component $Y\in\cY$ give the same weight, thus we can define the character $\mu_L(Y):=\mu_L(z)$ for any $z\in Y$. Taking the convex hull of all the characters $\mu_L(Y), Y\in \cY$ in the real vector space $\Mo(H)_\R\simeq\R^r$, we get a convex polytope $\Delta(L)$, called the {\it polytope of fixed points} associated with the $H$-action on $(Z,L)$.
	
The same $H$-action provides another set of characters $\tilde \Gamma(L)\subset\Mo(H)$ corresponding to the decomposition $\HH^0(Z,L)=\bigoplus_{u\in \tilde\Gamma(L)}\HH^0(Z,L)_u,$ where $\HH^0(Z,L)_u$ is the $H$-eigenspace whose associated weight is $u$. The convex hull $\Gamma(L)\subset \Mo(H)_\R$ of $\tilde\Gamma(L)$ is called the {\it polytope of sections}. Remarkably, $\Delta(L)=\Gamma(L)$ holds if $L$ is globally generated \cite[Lemma 2.4]{BWW}. 

\subsubsection{GIT, Chow and limit quotients}
The polytope of sections $\Gamma(L)$ provides a set of GIT quotients of $Z$. In fact, every rational $u\in\Gamma(L)$ defines a finitely generated $\C$-algebra
\[A_u:=\bigoplus_{\substack{m\geq 0\\ mu\in\Mo(H)}} \HH^0(Z,mL)_{mu},\]
and the normal projective variety $\GZ_u:=\mathrm{Proj}(A_u)$ is a GIT quotient of $Z$ by the action of $H$. The map $Z\dashrightarrow \GZ_u$ is defined on a subset of semistable points $Z^{\ss}_u\subset Z$. We call these varieties the {\em GIT quotients of }$(Z,L)$. Every projective GIT quotient of $Z$ is a GIT quotient of a polarized pair $(Z,L)$, for some ample line bundle $L$. 

The following definition, given by Kapranov in \cite{Kap}, works in the broader context of linear group actions on projective varieties. We can always find an open subset $U\subset Z$ such that the closures $\overline{H.z}$ of the $H$-orbits of all points $z\in U$ have constant dimension and homology class. Up to shrinking $U$, we can assume that it is also $H$-invariant. Thus, we have a geometric quotient $\cG U:=U/H$ with a natural embedding in the {\it Chow variety} of algebraic cycles of $Z$ 
\[\psi:\cG U\hookrightarrow \mathrm{Chow}(Z).\]
\begin{definition}
    The {\it Chow quotient} of $Z$ by $H$ is the closure of $\psi(\cG U)$ in the Chow variety $\mathrm{Chow}(Z)$. We denote its normalization by $\cC Z$ and call it the {\it normalized Chow quotient} of $Z$ by $H$.
\end{definition}

It can be proven that Chow quotients by reductive group actions are related to GIT quotients \cite[Theorem 0.4.3]{Kap} but this relation is much tighter in the case of torus actions. Recall that the GIT quotients of $Z$ by the $H$-action form an inverse system. Indeed, if $Z^{\ss}_{u_2}\subset Z^{\ss}_{u_1}$ for $u_1\in \Gamma(L_1)$, $u_2\in \Gamma(L_2)$, there is a projective dominant morphism $f_{u_2,u_1}:\GZ_{u_2}\to \GZ_{u_1}$. 
Since the open set $U$ defined above is contained in every set of semistable points $Z^{\ss}_{u}$, we get a morphism $\GU\to \GZ_u$, for every $u$, and so a morphism from $\GU$ to the inverse limit of the GIT system. 
The unique irreducible component of the inverse limit of the GIT inverse system that contains the image of $\GU$ is called {\it limit quotient of $Z$} by $H$. Its normalization is denoted by $\LZ$ and is called the {\it normalized limit quotient}. 

Analogously, one may consider the inverse system of GIT quotients of $(Z,L)$, and define the {\it limit quotient of $(Z,L)$} by $H$, and its normalization $\LZ(L)$.

The following statement,  that directly links Chow quotient by torus actions to GIT quotients and their inverse limit,  has been proved in \cite{BHK}.


\begin{proposition}\cite[Corollary 2.6, 2.7]{BHK}\label{prp:BHK}
    Let $H$ be an algebraic torus effectively acting on a normal projective variety $Z$. Then $\cC Z\simeq\LZ\simeq \LZ(L)$, for every ample line bundle $L$ on $Z$. 
\end{proposition}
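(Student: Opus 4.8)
The plan is to exhibit a chain of natural proper birational morphisms
\[
\cC Z \xrightarrow{a} \LZ \xrightarrow{b_L} \LZ(L),
\]
and to show that all of them are isomorphisms. First observe that, by construction, each of the three varieties contains the geometric quotient $\GU=U/H$ as a dense open subset: it is dense in $\cC Z$ by definition of the Chow quotient, and it maps isomorphically into every $\GZ_u$ (since $U\subset Z^{\ss}_u$), hence into both limit quotients. Thus all three are normal projective varieties sharing a common dense open, and any morphism between them that restricts to the identity on $\GU$ is automatically birational. The morphism $b_L$ arises because the inverse system defining $\LZ(L)$ is a subsystem of the one defining $\LZ$, so the projection of inverse limits carries the distinguished component (the one containing $\GU$) to the distinguished component; the morphism $a$ is obtained from the universal family of cycles over $\cC Z$, which for each $u$ produces a morphism $\cC Z\to\GZ_u$ extending $\GU\to\GZ_u$ and compatible with the transition maps (cf. \cite[Theorem 0.4.3]{Kap}), and hence a morphism into the inverse limit landing in the distinguished component.

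I would then reduce the whole statement to a single isomorphism by a purely formal argument. Suppose the composite $c_L:=b_L\circ a\colon\cC Z\to\LZ(L)$ is an isomorphism. Then $r:=c_L^{-1}\circ b_L$ is a left inverse of $a$, so $a$ is a section of the separated morphism $r$ and therefore a closed immersion; being also dominant with irreducible target, $a$ is surjective, hence an isomorphism, and then $b_L=c_L\circ a^{-1}$ is an isomorphism as well. Consequently it suffices to prove that $c_L$ is an isomorphism for every ample $L$: proving it for a single $L$ already identifies $\cC Z\cong\LZ$, and running the argument for each $L$ identifies every $\LZ(L)$ in turn.

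To prove that $c_L$ is an isomorphism I would invoke Zariski's Main Theorem: $c_L$ is a proper birational morphism of normal projective varieties, so it is an isomorphism as soon as it is bijective on closed points (a proper quasi-finite morphism is finite, and a finite birational morphism onto a normal variety is an isomorphism). Surjectivity is immediate from properness together with the density of $\GU$. The crux is injectivity, which I would reduce to a reconstruction statement special to torus actions: for general $z$ the orbit closure $\overline{H.z}$ is a polarized toric variety whose moment polytope is $\Gamma(L)$ (replacing $L$ by a multiple we may assume it globally generated, so that $\Delta(L)=\Gamma(L)$ by \cite[Lemma 2.4]{BWW}), and a boundary point of $\cC Z$ is a degeneration of such orbit closures, encoded by a coherent polyhedral subdivision of $\Gamma(L)$, its components and multiplicities corresponding to the cells of the subdivision. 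For each rational $u\in\Gamma(L)$ the image of the cycle in $\GZ_u$ records this subdivision near $u$; letting $u$ range over $\Gamma(L)$ recovers the entire subdivision, hence the cycle, so two distinct cycles have distinct images in the inverse system, which is exactly the injectivity of $c_L$.

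The main obstacle is precisely this last reconstruction step, in two respects. First, one must verify that the set-theoretic assignment sending a Chow cycle to its family of points in the $\GZ_u$ is an actual morphism; this requires producing the comparison maps from a flat family over $\cC Z$ rather than pointwise, and checking compatibility with the transition maps $f_{u_2,u_1}$. Second, and more seriously, one must show that the polyhedral subdivision of $\Gamma(L)$ attached to a degenerate cycle, together with the multiplicities of its components, is faithfully detected by the collection of GIT images. This is where the combinatorial theory of torus-orbit degenerations enters, generalizing the secondary-fan description of Kapranov--Sturmfels--Zelevinsky \cite{KSZ} from the case of projective space to an arbitrary normal polarized pair $(Z,L)$; once this identification of boundary strata is in place, everything else is formal.
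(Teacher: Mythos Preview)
The paper does not prove this proposition: it is quoted verbatim as \cite[Corollary 2.6, 2.7]{BHK} and used as a black box, with no argument supplied. There is therefore no ``paper's own proof'' to compare your attempt against.

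As for the substance of your sketch: the overall architecture (produce compatible morphisms from $\cC Z$ to all GIT quotients, pass to the inverse limit, then argue the resulting proper birational map is quasi-finite and apply Zariski's Main Theorem) is indeed the shape of the argument in \cite{BHK}. You are also honest about where the real content lies: the reconstruction step showing that the polyhedral subdivision of $\Gamma(L)$ attached to a boundary cycle is recoverable from its GIT images. But as written this remains a plan rather than a proof. Two points deserve flagging. First, your formal reduction ``if $c_L$ is an isomorphism then $a$ and $b_L$ are'' is fine, but the existence of the morphism $a$ itself (not just pointwise) is not addressed beyond a citation; one must actually build it from a family, and this is part of what \cite{BHK} does. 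Second, the injectivity argument you outline implicitly assumes that boundary points of $\cC Z$ are classified by coherent subdivisions of $\Gamma(L)$ with suitable multiplicities; establishing this in the generality of an arbitrary normal polarized $(Z,L)$, rather than for toric $Z$ as in \cite{KSZ}, is essentially the theorem you are trying to prove, so invoking it here risks circularity. If you want a self-contained proof you should either reproduce the relevant parts of \cite{BHK} or, more simply, cite the result as the paper does.
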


Let $\chi_u:\LZ\to\GZ_{u}$ be the natural projections for every $u\in\Gamma(L)$. Note that these $\chi_u$ are all birational morphisms. By definition, the normalized limit quotient satisfies the following universal property.

\begin{remark}\label{remark:univprop}
     Let $M$ be an irreducible normal variety with a collection of birational morphisms $\phi_u:M\to\GZ_u$ satisfying $\phi_u=f_{v,u}\circ\phi_v$ for every $u,v\in\Gamma$ such that $Z^{ss}_v\subset Z^{ss}_u$. Then there exists a unique birational morphism $\Phi:M\to\LZ$ such that $\phi_u=\chi_u\circ\Phi$ for all $u\in\Gamma$.
\end{remark}

\subsubsection{Combinatorial quotients} The definition of Chow quotients has a fully combinatorial description in the case of subtorus actions on toric varieties. This construction has been introduced in \cite{KSZ} with the name of {\it combinatorial quotient}. 

Now assume that $Z$ is a normal toric variety with maximal torus $T\subset Z$ and associated fan $\Sigma\subset\No(T)_\R$. Let $H\subset T$ be a subtorus and call $\pi:\No(T)_\R\to\No(T/H)_\R$ the natural projection. The {\it quotient fan} of $\Sigma$ by $H$ is the fan in $\No(T/H)_\R$ whose minimal cone containing $x\in\No(T/H)$ is given by \[\bigcap_{\substack{\sigma\in\Sigma\\x\in\pi(\sigma)}}\pi(\sigma).\]
\begin{definition}
    The normal $T/H$-toric variety whose fan is $\Sigma/H$ is called the {\it combinatorial quotient} of $Z$ by $H$.
\end{definition}

In the projective case, we have the following result.

\begin{proposition}\cite[Theorem 2.1]{KSZ}
    Let $Z$ be a projective $T$-toric variety and $H\subset T$ be a subtorus. The combinatorial quotient of $Z$ by $H$ is isomorphic to the normalized Chow quotient $\cC Z$.
\end{proposition}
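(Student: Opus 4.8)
The plan is to leverage Proposition \ref{prp:BHK}, which identifies $\cC Z$ with the normalized limit quotient $\LZ$, and then to carry out the entire argument combinatorially at the level of fans. Since $Z$ is a $T$-toric variety and $H\subset T$, the residual torus $T/H$ acts with a dense orbit on every quotient in sight, so each GIT quotient $\GZ_u$, as well as $\LZ$ and $\cC Z$, is a normal $T/H$-toric variety. The whole task therefore reduces to computing the fan of $\LZ$ in $\No(T/H)_\R$ and matching it with the quotient fan $\Sigma/H$.

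First I would describe the GIT quotients combinatorially. For rational $u\in\Gamma(L)$, the semistable locus $Z^{\ss}_u$ is a $T$-invariant open subset, obtained from $Z$ by discarding the torus orbits whose cones fail a stability condition determined by $u$; the quotient $\GZ_u=Z^{\ss}_u\git H$ is then the $T/H$-toric variety whose fan $\Sigma_u$ is assembled from the projections $\pi(\sigma)$ of the $u$-semistable cones $\sigma\in\Sigma$. This is the standard toric GIT dictionary, which I would take as known. As $u$ ranges over the rational points of a chamber of the GIT (secondary) fan inside $\Mo(H)_\R$, the fan $\Sigma_u$ is constant, so only finitely many fans arise, indexed by these chambers.

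Next I would invoke the behaviour of inverse limits of toric varieties. Whenever $Z^{\ss}_v\subset Z^{\ss}_u$, the dominant morphism $f_{v,u}\colon\GZ_v\to\GZ_u$ is a proper birational toric morphism, hence corresponds to a refinement of $\Sigma_u$ by $\Sigma_v$. Consequently the normalized limit quotient $\LZ$ -- the normalization of the distinguished component of the inverse limit containing the image of $\GU$ -- is the $T/H$-toric variety whose fan is the coarsest common refinement $\bigwedge_u\Sigma_u$ of all the GIT fans. The final step is then the purely combinatorial identity
\[
\bigwedge_u\Sigma_u \;=\; \Sigma/H,
\]
i.e.\ that this common refinement is exactly the quotient fan, whose minimal cone through a point $x$ is $\bigcap_{\sigma\in\Sigma,\,x\in\pi(\sigma)}\pi(\sigma)$. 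Both sides are governed by the single collection of projected cones $\{\pi(\sigma)\}_{\sigma\in\Sigma}$: the quotient fan is the coarsest fan in $\No(T/H)_\R$ on each of whose cones the assignment $x\mapsto\bigcap_{x\in\pi(\sigma)}\pi(\sigma)$ is constant, and one checks that a wall appears in $\bigwedge_u\Sigma_u$ precisely where this intersection pattern jumps, which is where the GIT chamber structure changes.

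I expect this last identity to be the real obstacle. One must verify carefully that the stability data defining the fans $\Sigma_u$ -- equivalently, the walls of the secondary fan in $\Mo(H)_\R$ -- are faithfully recorded by the projected cones $\pi(\sigma)$, so that no refinement is created or lost in passing between the GIT description of $\bigwedge_u\Sigma_u$ and the intrinsic intersection description of $\Sigma/H$. Concretely this means showing that every wall of the quotient fan is a wall of some $\Sigma_u$ and conversely, a compatibility between the linearization lattice and the fan $\Sigma$ that constitutes the technical heart of \cite[Theorem 2.1]{KSZ}. The normalization built into the definition of $\LZ$ is what lets one disregard any non-normal or non-reduced structure on the inverse limit and conclude by comparing fans alone.
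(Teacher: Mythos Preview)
The paper does not give its own proof of this proposition: it is stated as a citation of \cite[Theorem~2.1]{KSZ} and used as a black box in the preliminaries. There is therefore nothing in the paper to compare your argument against line by line.

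That said, your outline is a coherent strategy, but it is not the route taken in \cite{KSZ}. You deduce the result by first invoking Proposition~\ref{prp:BHK} to replace $\cC Z$ with the normalized limit quotient $\LZ$, and then computing the fan of $\LZ$ as the coarsest common refinement of the GIT fans $\Sigma_u$. The original argument in \cite{KSZ} is more direct: it works with Chow forms and the explicit description of toric cycles, and predates the limit-quotient formalism of \cite{BHK}. Before relying on Proposition~\ref{prp:BHK} here you should check that \cite{BHK} does not itself appeal to \cite[Theorem~2.1]{KSZ} in the toric case, or your argument risks circularity. Your identification of the crux --- the equality $\bigwedge_u\Sigma_u=\Sigma/H$ --- is accurate, and you are right that this requires showing the secondary-fan walls in $\Mo(H)_\R$ are detected exactly by the incidence pattern of the projected cones $\pi(\sigma)$; this is genuine work and not merely a formality.
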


We stress that combinatorial quotients can be performed for any toric variety, and they are still related to algebraic cycles of $Z$, although only topologically \cite[Proposition 2.1]{KSZ}.



\subsection{The complete flag of type $\DC_2$}\label{ssec:prelimFLAG}

In this section we introduce the complete flag variety $F$ of type $\DC_2$ and briefly describe its classical realization as the {\it null-correlation bundle} over $\P^3$.\\

Let $\fg=\fsp_4$ be the simple Lie algebra described by the Dynkin diagram $\DC_2$ 
and consider the associated adjoint linear group $G=\PSP(4)$, that is the \emph{projective symplectic group} of order $4$. By definition, $G$ is the quotient of $\Sp(4)$ by its center. Geometrically, $G$ is the group of automorphisms of $\P^3$ preserving a fixed non-degenerate skew-symmetric form $\Omega:\C^4\times\C^4\to\C$.
		

By a suitable choice of coordinates, we can assume that $\Omega$ is given by
\begin{equation}\label{eq:Omega}
    \Omega=\begin{pmatrix}
    0 & 0 & 0 & 1\\
    0 & 0 & 1 & 0\\
    0 & -1 & 0 & 0\\
    -1 & 0 & 0 & 0\\
\end{pmatrix}.\end{equation}
so that all elements of $\fg=\{g\in\fsl_4\,|\,g^t\Omega+\Omega g=0\}$ have the form
\begin{equation}\label{eq:sp4}
    g=\begin{pmatrix}
    h_1 & x'_1 & y' & z' \\
    x_1 & h_2 & x'_2 & y'\\
    y & x_2 & -h_2 & -x'_1\\
    z & y & -x_1 & -h_1
    \end{pmatrix}.
\end{equation}
In this way, elements of $G=\{M\in\sl(4)\,|\,M^t\Omega M=\Omega\}$ are homothety classes $[e^g]$ of exponential matrices taken all over $g\in\fg$.

Each one of the $8$ off-diagonal variables in (\ref{eq:sp4}) corresponds to one of the root spaces of $\fg=\fh\oplus\bigoplus_{\alpha\in \Phi}\fg_\alpha$, where $\fh$ is the Lie algebra of the rank-$2$ maximal torus 
\begin{equation}\label{eq:H}
    H=\{[\diag(h_1,h_2,h_2^{-1},h_1^{-1})]\in G\,|\, h_1,h_2\in\C^*\}
\end{equation} and $\Phi$ is the root system of $\fg$, represented in Figure \ref{fig:rootsC2}. Note that $\Mo(H)=\Z\Phi$. 

Let $\{e_1,e_2\}$ be the standard basis of $\R^2\simeq \Mo(H)_\R$ with the standard inner product. We choose $\alpha_1:=e_1$ as the simple short root while $\alpha_2:=e_2-e_1$ as the long simple one.

\begin{figure}[ht!]
\centering
\resizebox{0.3\textwidth}{!}{%
\begin{tikzpicture}
     \node at (0,0) {\includegraphics[width=3cm]{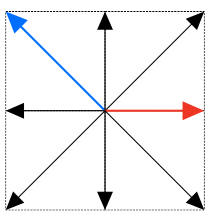}};
     \node at (1.8,0) {$\alpha_1$};
     \node at (-1.6,1.6) {$\alpha_2$};
     \node at (-0.1,1.6) {$\alpha_1+\alpha_2$};
     \node at (1.8,1.6) {$2\alpha_1+\alpha_2$};
\end{tikzpicture}
}
\caption{Root system of type $\DC_2$}\label{fig:rootsC2}
\end{figure}

\begin{remark}
    The four negative roots in $\Phi$ corresponds to the four variables appearing in the strictly lower part of (\ref{eq:sp4}) in the following way:
    \begin{equation}\label{eq:root-var}
			x_1\longleftrightarrow-\alpha_1,\quad x_2\longleftrightarrow-\alpha_2,\quad y\longleftrightarrow-\alpha_1-\alpha_2,\quad z\longleftrightarrow-2\alpha_1-\alpha_2.
		\end{equation}
\end{remark}

The Weyl group $W$ of $G$ is isomorphic to the {dihedral group} $D_4$ of symmetries of a square, a Coxeter group of order $8$ with presentation
\begin{equation}\label{eq:presentation D4}
    D_4=\langle r_1,r_2\,|\, r_1^2,r_2^2,(r_1r_2)^4\rangle.
\end{equation}
By definition, elements of $W$ are classes of $H$-normalizing matrices in $G$ modulo $H$. We choose generators $r_1,r_2\in W$ associated to matrices \begin{equation}\label{eq:r1,r2}
    r_1:=\begin{pmatrix}
    0 & 1 & 0 & 0 \\
    1 & 0 & 0 & 0\\
     0 & 0 & 0 & 1\\
     0 & 0 & 1 & 0
    \end{pmatrix}\in\Sp(4),\qquad 
    r_2:=\begin{pmatrix}
        1 & 0 & 0 & 0\\
        0 & 0 & -1 & 0\\
        0 & 1 & 0 & 0\\
        0 & 0 & 0 & 1
    \end{pmatrix}\in\Sp(4).
\end{equation}
We keep denoting by $r_1,r_2$ the corresponding classes in $G$. The longest element $w_0\in W$ is $w_0=(r_1r_2)^2$ with length $4$. In this way, we can think the $8$ elements of $W$ as matrices \begin{equation}\label{eq:W}
    W=\{e:=\mathrm{id},r_1,r_2,r_1r_2,r_2r_1,r_1r_2r_1,r_2r_1r_2,w_0:=(r_1r_2)^2\}\subset\Sp(4).
\end{equation}


Then, let $B\subset G$ be the Borel subgroup of upper triangular matrices in $G$, which corresponds to the fixed choice of positive roots $\Phi^+$. In other terms, its Lie algebra is 
\begin{equation}\label{eq:b}
    \fb=\fh\oplus\bigoplus_{\alpha\in\Phi^+}\fg_\alpha.
\end{equation}

Since $H\subset B$, the Borel subgroup decomposes as $B=U\rtimes H$ with $U$ the unipotent subgroup of classes of strictly upper triangular matrices in $G$. In this way, we have the following description of the complete flag variety $F=G/B$: 
\begin{proposition}\label{prp:C2(1,2)}
    The complete flag variety $F$ is a rational smooth projective Fano 4-fold of Picard number $2$.
\end{proposition}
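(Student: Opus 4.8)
The plan is to recognize $F=G/B$ as a rational homogeneous space and read off each asserted property from the root-theoretic data already fixed in Section~\ref{ssec:prelimFLAG}, cross-checking against the concrete null-correlation model. First I would recall that for a Borel subgroup $B$ of a semisimple group $G$ the quotient $G/B$ is a smooth projective variety on which $G$ acts transitively; in particular $F$ is smooth and projective. Its dimension equals $\dim G-\dim\fb=|\Phi^+|$, and since the root system of type $\DC_2$ has exactly four positive roots --- equivalently, the four strictly-lower off-diagonal entries $x_1,x_2,y,z$ of \eqref{eq:sp4} --- we get $\dim F=4$.

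For rationality I would invoke the Bruhat decomposition: the big cell $U^-B/B$, with $U^-$ the unipotent radical of the opposite Borel, is an open dense subset of $F$ isomorphic to $\mathbb{A}^{|\Phi^+|}=\mathbb{A}^4$, so $F$ is rational. Alternatively, and more in line with the classical picture, one may use the forgetful map $F\to\P^3$, $(V_1\subset V_2)\mapsto[V_1]$, whose fibre over a point is the $\P^1$ of Lagrangian planes through the corresponding line; this exhibits $F$ as the projectivization $\P(N)$ of the null-correlation bundle $N$ over $\P^3$, and a $\P^1$-bundle over the rational variety $\P^3$ is again rational.

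The Picard number is handled by the fact that $G/B$ depends, up to isogeny, only on the underlying root system, so $\Pic(F)$ is the full weight lattice of type $\DC_2$, free of rank equal to the number of simple roots, namely $2$; a basis is given by the two fundamental-weight line bundles, equivalently by the classes of the two Schubert divisors. This is confirmed by the projective-bundle formula applied to $F=\P(N)\to\P^3$, which yields $\Pic(F)\cong\Pic(\P^3)\oplus\ZZ\cong\ZZ^2$. Finally, $F$ is Fano because its anticanonical class corresponds to the weight $2\rho=\sum_{\alpha\in\Phi^+}\alpha$, which is strictly dominant (regular), so the associated line bundle $\mathcal{L}(2\rho)$ is ample.

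Each of these inputs is classical, so the only genuine care is bookkeeping: matching the abstract root data to the explicit realization so that the two descriptions of $\Pic(F)$ agree and the anticanonical weight is correctly identified as $2\rho$, hence seen to lie in the ample interior. I expect the mild obstacle to be verifying Fano-ness directly in the null-correlation model, where one must compute $-K_{\P(N)}$ via the relative Euler sequence and check positivity against a nef basis; this is avoided entirely if one is content to quote the general ampleness of $\mathcal{L}(2\rho)$ on $G/B$.
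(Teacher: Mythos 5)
Your argument is correct and matches the paper's treatment: the proposition is stated there without a formal proof, being justified by exactly the classical facts you cite (smoothness, projectivity and dimension from $G/B$ with $|\Phi^+|=4$, rationality from the big Bruhat cell, Picard rank equal to the number of simple roots, and ampleness of $\mathcal{L}(2\rho)$), together with the same null-correlation-bundle realization $F\simeq\P(\cN_\Omega)\to\P^3$ that the paper develops immediately afterwards. No gaps.
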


It is known that $F$ parametrizes $\Omega$-isotropic flags of vector subspaces in $\C^4$: $$\{(V_1,V_2)\in\G(1,4)\times\G(2,4)\,|\,V_1\subset V_2\,\,\, \Omega-\textrm{isotropic subspaces}\}$$ where $\G(k,4)$ denote the Grassmannian of $k$-vector subspaces of $\C^4$. Forgetting about planes, we get that $\P^3$ parametrizes isotropic lines in $\C^4$. On the other hand, the smooth quadric hypersurface $\cQ^3\subset\P^4$ parametrizes the isotropic planes of $\C^4$. This description provides two structures of $\P^1$-bundle for $F$, given by the natural {forgetful} maps
\begin{equation}\label{eq:C2(1,2) contractions}
    \begin{tikzcd}
        &F\dlar\drar\\
        \P^3 & &\cQ^3
    \end{tikzcd}
\end{equation}
which are exactly the only two possible contractions of $F$.

Another way to present $F$ is through the {\it null-correlation bundle} of $\P^3$. For details, see \cite[Section 4.2]{OSS}.

It is well-known that skew-symmetrical $2$-forms on $\C^4$ correspond to hyperplane sections of the smooth quadric hypersurface $\cQ^4\subset\P^5$, classically called {\it line complexes} since $\cQ^4\simeq\G(2,4)$. In particular, such forms are non-degenerate if and only if their corresponding hyperplane section of $\cQ^4$ is singular. We can use this last correspondence to construct a rank-$2$ vector bundle on $\P^3$ for any non-degenerate skew-symmetric form on $\C^4$. For example, consider the one denoted by $\Omega$ in (\ref{eq:Omega}). The polarity induced by $\Omega$ and the Euler sequence of $\P^3$ can be arranged together to give the following commutative diagram
\begin{equation*}
    \begin{tikzcd}
        \cO_{\P^3}\rar[hook]\dar[equal]&\Omega_{\P^3}(2)\rar[two heads]\dar[hook]&\cN_\Omega\dar[hook]\\
        \cO_{\P^3}\rar[hook, shorten >=-1.2em]\dar&\quad\,\,\C^4\otimes\cO_{\P^3}(1)\overset{\Omega}{\simeq}(\C^4)^\vee\otimes\cO_{\P^3}(1)\dar[two heads]\rar[two heads]& T_{\P^3}\dar[two heads]\\
        0 \rar &\cO_{\P^3}(2)\rar[equal]&\cO_{\P^3}(2)
    \end{tikzcd}
\end{equation*}
where rows and columns are exact and $\cN_\Omega$ is the rank-2 null-correlation bundle on $\P^3$ defined by $\Omega$. Moreover, taking global section we deduce the isomorphism \[\bigwedge^2(\C^4)^\vee\simeq\HH^0(\P^3,\Omega_{\P^3}(2))\simeq\Hom_{\cO_{\P^3}}(T_{\P^3},\cO_{\P^3}(2))\] which means that every non-degenerate skew-symmetric $2$-forms uniquely defines a null-correlation bundle on $\P^3$. 

Finally, consider the two natural contractions of $\P(T_{\P^3}^\vee)$:
\[
\begin{tikzcd}
    &\P(T_{\P^3}^\vee)\dlar["\pi_1"']\drar["\pi_2"]\\
    \P^3& &\cQ^4
\end{tikzcd}
\]

The form $\Omega$ determines an embedding $\cQ^3\hookrightarrow \cQ^4$, thus a subbundle $\pi_2^{-1}(\cQ^3)$ of $\P(T_{\P^3}^\vee)$. This is clearly isomorphic to the complete flag variety $F$ by our previous discussion about line complexes. Eventually, we see that $\pi_2^{-1}(\cQ^3)$ is also isomorphic to $\P(\cN_\Omega)$ as subbundles of $\P(T_{\P^3}^\vee)$ by construction of the null-correlation bundle. The two natural contractions of $\P(T_{\P^3}^\vee)$ can be lifted up to $\P(\cN_\Omega)$ so that we also naturally recover the two elementary contractions of $F$ presented in Diagram (\ref{eq:C2(1,2) contractions}).

\[\begin{tikzcd}
    &F\simeq\P(\cN_\Omega)\dlar["\pi_1"']\ar[dd,hook]\drar["\pi_2"]\\
    \P^3\ar[dd,equal] & & \cQ^3\ar[dd,hook]\\
    &\P(T_{\P^3}^\vee)\dlar["\pi_1"']\drar["\pi_2"]\\
    \P^3& &\cQ^4
\end{tikzcd}\]


\section{Maximal torus action on the complete flag}\label{sec:Chow}


\subsection{Fixed flags and their weights}\label{ssec:fixed}

In this section we describe the action on the complete flag variety $F$ of the rank two maximal torus $H\subset G$, given by diagonal matrices.

Note that every $h\in H$ uniquely corresponds to a couple $(h_1,h_2)\in(\C^*)^2$ as in (\ref{eq:H}). Thus our chosen positive roots can be described as follows:
\begin{equation}\label{eq:H-weights}
    \parbox{30mm}{$h^{\alpha_1}=h_1=:t_1,$\\ $h^{\alpha_2}=h_1^{-1}h_2=:t_2,$}\qquad \parbox{40mm}{$h^{\alpha_1+\alpha_2}=h_2=:t_1t_2,$\\ $h^{2\alpha_1+\alpha_2}=h_1h_2=:t_1^2t_2.$}
\end{equation}

We have the following description of the $H$-action on $F$. We denote by $\Lambda(H)$ the weight lattice of $H$ and recall that there is a natural isomorphism $\Lambda(H)\simeq\Pic(F)$ that associates to each weight $\lambda\in \Lambda(H)$ the line bundle \[L(\lambda):=G\times^B\C:=(G\times\C)/\sim, \quad (g,v)\sim(gb,\xi(b)^\lambda v)\quad\forall\,b\in B\]
where $\xi:B\to H$ sends an upper triangular matrix to its diagonal.
\begin{lemma}\cite[Propositions 3.6, 3.7]{WORS5}\label{lemma:fixedlocus}
    The fixed point locus $F^H$ is bijective to the Weyl group $W$, that is \[F^H=\{\sigma B\,|\,\sigma \in W\}.\] The weights of the induced $H$-action on the tangent space $T_{F,\sigma B}$ at each $\sigma B$ is $$\{\sigma(\alpha)\,|\,-\alpha\in\Phi^+\}.$$ 
    
    Moreover, for every weight $\lambda\in\Lambda(H)$, there exists an $H$-linearization $\mu^H_L$ of the associated line bundle $L=L(\lambda)\in\Pic(F)$ with \[\mu^H_L(\sigma B)=-\sigma(\lambda),\quad\forall\,\sigma\in W\] 
\end{lemma}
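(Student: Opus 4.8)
The plan is to prove the three assertions in turn; each reduces, via a twisted-equivariance argument, to the corresponding statement at the base point $eB$. For the first, a coset $gB$ is fixed by $H$ if and only if $g^{-1}Hg\subseteq B$. Both $H$ and $g^{-1}Hg$ are maximal tori of the connected solvable group $B$, hence conjugate by an element $u$ of the unipotent radical $U$; then $ug\in N_G(H)$ and $gB=(ug)B=\sigma B$, where $\sigma\in W$ is the class of $ug$. Conversely each $\sigma B$ is $H$-fixed, and if $\sigma B=\sigma'B$ then $\sigma^{-1}\sigma'\in B\cap N_G(H)=N_B(H)=H$, since a maximal torus is its own normalizer in a Borel subgroup containing it; thus $\sigma$ and $\sigma'$ define the same element of $W=N_G(H)/H$. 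This gives the bijection $F^H=\{\sigma B\mid\sigma\in W\}$.

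For the tangent weights I would first treat $eB$, where the canonical identification $T_{F,eB}\cong\fg/\fb$ and the decomposition \eqref{eq:b} give $\fg/\fb\cong\bigoplus_{-\alpha\in\Phi^+}\fg_\alpha$, with $H$ acting on $\fg_\alpha$ through the weight $\alpha$ by the adjoint representation. Hence the weights at $eB$ are $\{\alpha\mid-\alpha\in\Phi^+\}$, which is the case $\sigma=e$. To transport this to a general $\sigma B$, consider the isomorphism $L_\sigma\colon F\to F$, $gB\mapsto\sigma gB$, which maps $eB$ to $\sigma B$. Writing $a_h$ for the automorphism $gB\mapsto hgB$, the fact that $\sigma$ normalizes $H$ yields the identity $a_h\circ L_\sigma=L_\sigma\circ a_{\sigma^{-1}h\sigma}$. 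Differentiating at $eB$ and using $(\sigma^{-1}h\sigma)^\beta=h^{\sigma(\beta)}$ shows that $d(L_\sigma)_{eB}$ sends a weight-$\beta$ vector to a weight-$\sigma(\beta)$ vector; applying this to the weights found at $eB$ produces $\{\sigma(\alpha)\mid-\alpha\in\Phi^+\}$ at $\sigma B$.

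For the last assertion I would linearize $L(\lambda)=G\times^B\C$ by the left action $h\cdot[g,v]=[hg,v]$, which is fibrewise linear and equivariant for the projection $L(\lambda)\to F$, hence a linearization in the sense of Section~\ref{ssec:prelimtorus}. To compute the weight at $\sigma B$, write $h\sigma=\sigma\,(\sigma^{-1}h\sigma)$ with $\sigma^{-1}h\sigma\in H\subset B$ and pull this factor across the defining relation $(g,v)\sim(gb,\xi(b)^\lambda v)$:
\[h\cdot[\sigma,v]=[\sigma\,(\sigma^{-1}h\sigma),v]=[\sigma,\xi(\sigma^{-1}h\sigma)^{-\lambda}v]=[\sigma,h^{-\sigma(\lambda)}v].\]
Thus the induced $H$-action on the fibre $L(\lambda)_{\sigma B}$ has weight $-\sigma(\lambda)$, that is $\mu^H_L(\sigma B)=-\sigma(\lambda)$, as claimed.

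The argument is essentially formal, the substantive input being the twisted-equivariance identity relating $a_h$ to $a_{\sigma^{-1}h\sigma}$. The only genuinely delicate point, and the place where sign errors creep in, is fixing once and for all the convention for the $W$-action on characters so that $(\sigma^{-1}h\sigma)^\beta=h^{\sigma(\beta)}$ holds, and then checking that this single convention simultaneously yields the tangent weights $\sigma(\alpha)$ and the opposite sign $-\sigma(\lambda)$ in the linearization. The minus sign in the latter is not an extra adjustment but is forced by the factor $\xi(b)^{-\lambda}$ that emerges when the $B$-factor is pulled through the associated-bundle relation.
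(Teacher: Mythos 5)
The paper itself offers no proof of this lemma --- it is quoted from \cite[Propositions 3.6, 3.7]{WORS5} --- so there is nothing internal to compare against; your argument is the standard one and is essentially correct. All three reductions to the base point $eB$ work as you describe: the identification $T_{F,eB}\cong\fg/\fb\cong\fn$ gives the weights $\{\alpha\mid-\alpha\in\Phi^+\}$, the intertwining identity $a_h\circ L_\sigma=L_\sigma\circ a_{\sigma^{-1}h\sigma}$ transports them correctly under the convention $(\sigma^{-1}h\sigma)^\beta=h^{\sigma(\beta)}$, and the fibrewise computation in $G\times^B\C$ produces the sign $-\sigma(\lambda)$ exactly as forced by the relation $[gb,v]=[g,\xi(b)^{-\lambda}v]$.

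The one place that needs repair is the first paragraph. From $g^{-1}Hg\subseteq B$ and conjugacy of maximal tori in the connected solvable group $B$, you get an element $u\in U$ with $u^{-1}(g^{-1}Hg)u=H$ (or with the roles reversed); unwinding this gives $gu\in N_G(H)$ (resp.\ $gu^{-1}$), i.e.\ the conjugating element must be absorbed into $g$ \emph{on the right}, so that the coset is unchanged: $gB=(gu)B$. Your version, ``$ug\in N_G(H)$ and $gB=(ug)B$'', is wrong on both counts: $ug$ need not normalize $H$, and $(ug)B\neq gB$ in general since left multiplication by $u$ moves the coset. This is a sign-of-conjugation slip rather than a gap in the idea --- the substantive inputs (conjugacy of maximal tori in $B$, and $N_G(H)\cap B=H$) are exactly the right ones --- but as written the displayed chain of equalities does not hold and should be corrected.
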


\begin{remark}
    In the following, we consider the complete flag variety $F$ polarized with the {\it minimal ample line bundle} $L=L(\lambda_{\mathrm{min}})$  defined by 
    \[\lambda_{\mathrm{min}}=\frac12(e_1+3e_2)\in\Lambda(H)\subset\Mo(H)_\R;\]  it has degree one with the fibers of $\pi_1$ and $\pi_2$ in Diagram (\ref{eq:C2(1,2) contractions}). The $H$-weights associated to the linearization of $L$ described in Lemma \ref{lemma:fixedlocus} are the vertices of their own convex hull in $\Mo(H)_\R$, which is the octagon $P$ in Figure \ref{fig:octagon}. In other terms, $P$ is the polytope of fixed points of the $H$-action on $F$.
\end{remark}
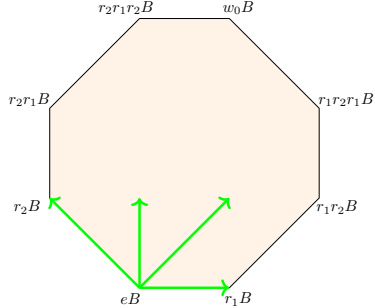
\begin{figure}[ht]
    \centering
    \resizebox{0.4\textwidth}{!}{%
\begin{tikzpicture}
    \node at (1.2,3.25) (nodee) {$w_0B$};
    \node at (-1.2,-3.25) (node1212) {$eB$};
    \node at (-1.3,3.25) (node1) {$r_2r_1r_2B$};
    \node at (1.2,-3.25) (node212) {$r_1B$};
    \node at (3.6,1.2) (node2) {$r_1r_2r_1B$};
    \node at (-3.5,-1.2) (node121) {$r_2B$};
    \node at (-3.45,1.2) (node21) {$r_2r_1B$};
    \node at (3.4,-1.2) (node12) {$r_1r_2B$};
    \fill[opacity=0.1,orange] (1,3) -- (3,1) -- (3,-1) -- (1,-3) -- (-1,-3) -- (-3,-1) -- (-3,1)-- (-1,3) -- (1,3);
    \draw (1,3) -- (3,1) -- (3,-1) -- (1,-3) -- (-1,-3) -- (-3,-1) -- (-3,1)-- (-1,3) -- (1,3);
    \draw[->, green, ultra thick] (-1,-3) -- (1,-3);
    \draw[->, green, ultra thick] (-1,-3) -- (1,-1);
    \draw[->, green, ultra thick] (-1,-3) -- (-1,-1);
    \draw[->, green, ultra thick] (-1,-3) -- (-3,-1);
\end{tikzpicture}
}\caption{\centering The octagon $P$ decorated with \\the positive roots centered at $eB$.}\label{fig:octagon}
\end{figure}


\subsection{Fundamental subtori and their associated BB-cells}\label{ssec:fundasubtori}

Since the character lattice $\Mo(H)$ is generated by the simple roots $\alpha_1,\alpha_2$, we can define the lattice homomorphisms $\mu_i:\Mo(H)\to\Z$, $i=1,2$ given by $\mu_i(\alpha_j)=\delta_{i,j}$. These homomorphisms provide $1$-parametric subgroups of $H$, which act on $F$. Following \cite{BWW}, the fixed point components of these two actions are precisely four $\P^1$'s in each case, each one containing two points of the form $\sigma B,\sigma\in W$. We have represented them as blue and red segments in Figure \ref{fig:2actions}. Note that for these two actions, the sink passes through $eB$ and the source passes through $w_0B$. 

\begin{figure}[ht]
    \centering
    \resizebox{0.25\textwidth}{!}{%
\begin{tikzpicture}
    \node at (1.2,3.25) (nodee) {$w_0B$};
    \node at (-1.2,-3.25) (node1212) {$eB$};
    \node at (-1.3,3.25) (node1) {$r_2r_1r_2B$};
    \node at (1.2,-3.25) (node212) {$r_1B$};
    \node at (3.6,1.2) (node2) {$r_1r_2r_1B$};
    \node at (-3.5,-1.2) (node121) {$r_2B$};
    \node at (-3.45,1.2) (node21) {$r_2r_1B$};
    \node at (3.4,-1.2) (node12) {$r_1r_2B$};
    \node[scale=1.4] at (0,-4.2) {Action $\mu_1$};
    \fill[opacity=0.1,orange] (1,3) -- (3,1) -- (3,-1) -- (1,-3) -- (-1,-3) -- (-3,-1) -- (-3,1)-- (-1,3) -- (1,3);
    \draw (1,3) -- (3,1) -- (3,-1) -- (1,-3) -- (-1,-3) -- (-3,-1) -- (-3,1)-- (-1,3) -- (1,3);
    \draw[blue, ultra thick] (1,3) -- (3,1);
    \draw[blue, ultra thick] (-1,3) -- (3,-1);
    \draw[blue, ultra thick] (1,-3) -- (-3,1);
    \draw[blue, ultra thick] (-1,-3) -- (-3,-1);  
\end{tikzpicture}
}
\hspace{2cm}
    \resizebox{0.25\textwidth}{!}{%
\begin{tikzpicture}
    \node at (1.2,3.25) (nodee) {$w_0B$};
    \node at (-1.2,-3.25) (node1212) {$eB$};
    \node at (-1.3,3.25) (node1) {$r_2r_1r_2B$};
    \node at (1.2,-3.25) (node212) {$r_1B$};
    \node at (3.6,1.2) (node2) {$r_1r_2r_1B$};
    \node at (-3.5,-1.2) (node121) {$r_2B$};
    \node at (-3.45,1.2) (node21) {$r_2r_1B$};
    \node at (3.4,-1.2) (node12) {$r_1r_2B$};
    \node[scale=1.4] at (0,-4.2) {Action $\mu_2$};
    \fill[opacity=0.1,orange] (1,3) -- (3,1) -- (3,-1) -- (1,-3) -- (-1,-3) -- (-3,-1) -- (-3,1)-- (-1,3) -- (1,3);
    \draw (1,3) -- (3,1) -- (3,-1) -- (1,-3) -- (-1,-3) -- (-3,-1) -- (-3,1)-- (-1,3) -- (1,3);
    \draw[red, ultra thick] (1,3) -- (-1,3);
    \draw[red, ultra thick] (3,1) -- (-3,1);
    \draw[red, ultra thick] (3,-1) -- (-3,-1);
    \draw[red, ultra thick] (1,-3) -- (-1,-3);
\end{tikzpicture}
}
    \caption{\centering Fixed point components of the $\C^*$-actions\\ associated to $\mu_1$ and $\mu_2$.}
    \label{fig:2actions}
\end{figure}
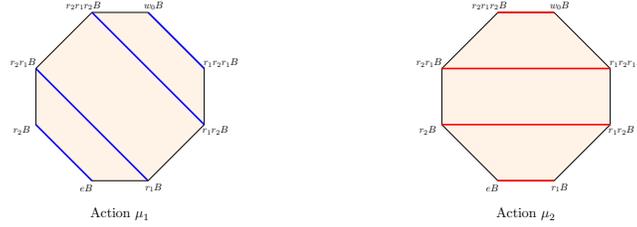

We can now give the following definition.

\begin{definition}\label{def:fundaction}
    The $\C^*$-actions on the flag manifold $F$ defined by the lattice homomorphisms of the form:
    \[\mu_i\circ\sigma:\Mo(H)\stackrel{\sigma}{\lra}\Mo(H)\stackrel{\mu_i}{\lra}\Z,\]
    for some $\sigma\in W$, will be called {\it fundamental $\C^*$-actions on $F$}.
\end{definition}

\begin{remark}\label{rem:fundaction}
 We will be interested in some BB-cells of the above actions; more concretely of the BB-cells associated to inner fixed components.
In this way, it will be enough for us to consider only $8$ out of the above $16$ actions, since the other $8$ can be obtained out of them by composition with the inversion $t\mapsto t^{-1}$. More precisely, we will denote $g:=r_2r_1=(r_1r_2)^3$, and consider:
\begin{enumerate}[leftmargin=6mm]
\item fundamental actions of the form $\mu_i\circ g^{k-1}$, $k=1,2,3,4$, $i=1,2$; 
\item for each of these actions $\mu_i\circ g^{k-1}$ we consider the BB-cells $F^\pm(Y_{i,k})$ of the fixed point component  $Y_{i,k}$ passing by the point $g^{k-1}r_iB$; in other words, this is the {\it inner} fixed point component of the action of minimal weight;
\item the subpolygons $P^\pm(i,k)$ generated by the weights of the $H$-fixed points contained in the closures of $F^\pm(Y_{i,k})$.
 \end{enumerate}
In this way, for each $i=1,2$, $k=1,2,3,4$, the pair $(i,k)$ provides a subdivision of the polygon $P$, that is \[P=P^-(i,k)\cup P^+(i,k).\] We will say that the subdivision is {\it of type $A$} if $i=2$ and {\it of type $B$} if $i=1$; furthermore, the subdivisions will be numbered as shown in Figure \ref{fig:subdiv}, as later explained in Section 6.
\end{remark}

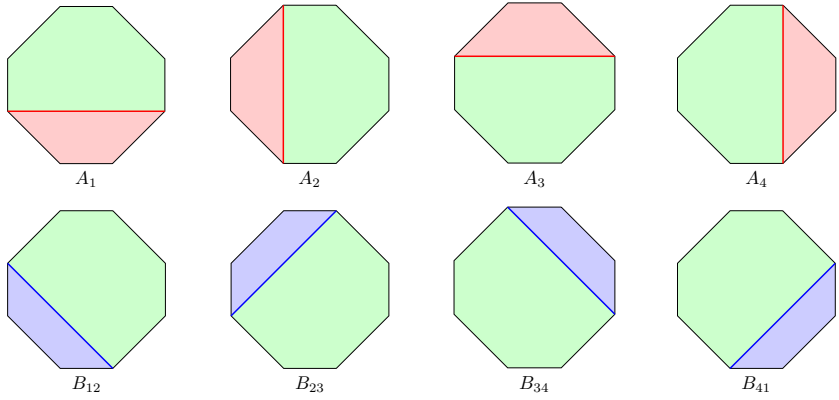
\begin{figure}[ht]
    \centering
    \resizebox{0.17\textwidth}{!}{%
\begin{tikzpicture}
    \node[scale=2] at (0,-3.6) {$A_1$};
    \fill[opacity=0.2,red] (1,-3) -- (3,-1) -- (-3,-1) -- (-1,-3) -- (1,-3);
    \fill[opacity=0.2,green]  (3,-1) -- (3,1) -- (1,3) -- (-1,3) -- (-3,1) -- (-3,-1)-- (3,-1);
    \draw (1,-3) -- (3,-1) -- (3,1) -- (1,3) -- (-1,3) -- (-3,1) -- (-3,-1)-- (-1,-3) -- (1,-3);
    \draw[red, ultra thick]  (3,-1) -- (-3,-1);
\end{tikzpicture}
}
\qquad
\resizebox{0.17\textwidth}{!}{%
\begin{tikzpicture}
    \node[scale=2] at (0,-3.6) {$A_2$};
    \fill[opacity=0.2,red] (-3,1) -- (-3,-1) -- (-1,-3) -- (-1,3) -- (-3,1);
    \fill[opacity=0.2,green]  (-1,-3) -- (1,-3) -- (3,-1) -- (3,1)-- (1,3) -- (-1,3) -- (-1,-3);
    \draw (1,3) -- (3,1) -- (3,-1) -- (1,-3) -- (-1,-3) -- (-3,-1) -- (-3,1)-- (-1,3) -- (1,3);
    \draw[red, ultra thick] (-1,3) -- (-1,-3);
\end{tikzpicture}
}
\qquad
\resizebox{0.17\textwidth}{!}{%
\begin{tikzpicture}
    \node[scale=2] at (0,-3.6) {$A_3$};
    \fill[opacity=0.2,red] (1,3) -- (3,1) -- (-3,1) -- (-1,3) -- (1,3);
    \fill[opacity=0.2,green] (3,1) -- (3,-1) -- (1,-3) -- (-1,-3) -- (-3,-1) -- (-3,1)-- (3,1);
    \draw (1,3) -- (3,1) -- (3,-1) -- (1,-3) -- (-1,-3) -- (-3,-1) -- (-3,1)-- (-1,3) -- (1,3);
    \draw[red, ultra thick] (3,1) -- (-3,1);
\end{tikzpicture}}
\qquad 
\resizebox{0.17\textwidth}{!}{%
\begin{tikzpicture}
    \node[scale=2] at (0,-3.6) {$A_4$};
    \fill[opacity=0.2,red] (3,1) -- (3,-1) -- (1,-3) -- (1,3) -- (3,1);
    \fill[opacity=0.2,green]  (1,-3) -- (-1,-3) -- (-3,-1) -- (-3,1)-- (-1,3) -- (1,3) -- (1,-3);
    \draw (1,3) -- (3,1) -- (3,-1) -- (1,-3) -- (-1,-3) -- (-3,-1) -- (-3,1)-- (-1,3) -- (1,3);
    \draw[red, ultra thick] (1,3) -- (1,-3);
\end{tikzpicture}
}
\\[4pt]
\resizebox{0.17\textwidth}{!}{%
\begin{tikzpicture}
    \node[scale=2] at (0,-3.6) {$B_{12}$};
    \fill[opacity=0.2,blue] (1,-3) -- (-1,-3) -- (-3,-1) -- (-3,1) -- (1,-3);
    \fill[opacity=0.2,green]  (1,-3) -- (-3,1) -- (-1,3) -- (1,3) -- (3,1) -- (3,-1) -- (1,-3);
    \draw (1,3) -- (3,1) -- (3,-1) -- (1,-3) -- (-1,-3) -- (-3,-1) -- (-3,1)-- (-1,3) -- (1,3);
    \draw[blue, ultra thick] (1,-3) -- (-3,1);
\end{tikzpicture}
}
\qquad
\resizebox{0.17\textwidth}{!}{%
\begin{tikzpicture}
    \node[scale=2] at (0,-3.6) {$B_{23}$};
    \fill[opacity=0.2,blue] (1,3) -- (-1,3) -- (-3,1) -- (-3,-1) -- (1,3);
    \fill[opacity=0.2,green]  (1,3) -- (-3,-1) -- (-1,-3) -- (1,-3) -- (3,-1) -- (3,1) -- (1,3);
    \draw (1,-3) -- (3,-1) -- (3,1) -- (1,3) -- (-1,3) -- (-3,1) -- (-3,-1)-- (-1,-3) -- (1,-3);
    \draw[blue, ultra thick] (1,3) -- (-3,-1);
\end{tikzpicture}
}
\qquad
\resizebox{0.17\textwidth}{!}{%
\begin{tikzpicture}
    \node[scale=2] at (0,-3.6) {$B_{34}$};
    \fill[opacity=0.2,blue] (-1,3) -- (1,3) -- (3,1) -- (3,-1) -- (-1,3);
    \fill[opacity=0.2,green] (-1,3) -- (3,-1) -- (1,-3) -- (-1,-3) -- (-3,-1) -- (-3,1) -- (-1,3);
    \draw (1,3) -- (3,1) -- (3,-1) -- (1,-3) -- (-1,-3) -- (-3,-1) -- (-3,1)-- (-1,3) -- (1,3);
    \draw[blue, ultra thick] (-1,3) -- (3,-1);
\end{tikzpicture}}
\qquad 
\resizebox{0.17\textwidth}{!}{%
\begin{tikzpicture}
    \node[scale=2] at (0,-3.6) {$B_{41}$};
    \fill[opacity=0.2,blue] (-1,-3) -- (1,-3) -- (3,-1) -- (3,1) -- (-1,-3);
    \fill[opacity=0.2,green]  (-1,-3) -- (3,1) -- (1,3) -- (-1,3) -- (-3,1) -- (-3,-1) -- (-1,-3);
    \draw (1,3) -- (3,1) -- (3,-1) -- (1,-3) -- (-1,-3) -- (-3,-1) -- (-3,1)-- (-1,3) -- (1,3);
    \draw[blue, ultra thick] (-1,-3) -- (3,1);
\end{tikzpicture}
}
    \caption{Subdivisions of type $A$, $B$.}
    \label{fig:subdiv}
\end{figure}


\section{Affine charts and their quotients}\label{sec:local}


We start by considering the nilpotent subalgebra 

\begin{equation}\label{eq:nilpotent}
    \fn:=\bigoplus_{\alpha\in \Phi^+}\fg_{-\beta}=\fg_{-\alpha_1}\oplus\fg_{-\alpha_2}\oplus\fg_{-\alpha_1-\alpha_2}\oplus\fg_{-2\alpha_1-\alpha_2}\subset \fg,
\end{equation}
that satisfies $\fg=\fb\oplus \fn$.

Composing the exponential map of $\fn$ with the quotient modulo $B$, we get a morphism, that is known to be an embedding of the affine space $\fn$ into the flag variety $F=G/B$ as a dense open subset $F_e\subset F$ 
\[\imath_e:\fn\stackrel{\simeq}{\lra} F_e:=\imath_e(\fn)\subset G/B, \qquad \imath_e(x)=\exp(x)B.\]
The map $\imath_e$ is $H$-equivariant, and so the open set $F_e$ is $H$-invariant. By composing $\imath_e$ with the elements of the Weyl group $W$, we get an $H$-invariant open covering of the flag variety $F$ given by
\[X=\bigcup_{\sigma\in W}F_\sigma,\qquad F_\sigma:=\sigma(F_e)=\sigma(\imath_e(\fn)).\]
\subsection{The general $H$-orbit} As in \cite[\S 3.2]{BOS2}, these affine charts $F_\sigma$ can be characterized in terms of $H$-orbits. The precise statement (which holds, with the same proof, for every complete flag variety) is the following.
\begin{proposition}\label{prop:Fsigma}
    For every $\sigma\in W$, it holds that \[F_\sigma=\{gB\in F\,|\,\sigma B\in\overline{H.gB}\}.\]
\end{proposition}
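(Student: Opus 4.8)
The plan is to reduce to the case $\sigma = e$ and then establish the two inclusions separately. Every $\sigma \in W$ is represented by a matrix normalizing $H$, so the automorphism it induces on $F$ sends $F_e$ to $F_\sigma$ and $eB$ to $\sigma B$, and since $\sigma H\sigma^{-1}=H$ it satisfies $\sigma\cdot\overline{H.gB}=\overline{H.\sigma gB}$. Applying the case $\sigma=e$ to the point $\sigma^{-1}gB$ and then translating back by $\sigma$ yields the equivalence $gB\in F_\sigma\iff\sigma B\in\overline{H.gB}$ for arbitrary $\sigma$. Hence it suffices to prove $F_e=\{gB\mid eB\in\overline{H.gB}\}$.

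For the inclusion $F_e\subseteq\{gB\mid eB\in\overline{H.gB}\}$ I would exhibit an explicit one-parameter degeneration onto $eB$. Writing a point of $F_e$ as $\exp(x)B$ with $x=\sum_{\alpha\in\Phi^+}c_\alpha x_{-\alpha}\in\fn$, and using $H\subset B$, one gets $h\cdot\exp(x)B=\exp(\Ad(h)x)B$, where $\Ad(h)x=\sum_\alpha c_\alpha h^{-\alpha}x_{-\alpha}$. Choosing a regular dominant cocharacter $v\in\No(H)$, so that $\langle\alpha,v\rangle>0$ for every $\alpha\in\Phi^+$, the associated one-parameter subgroup $\lambda$ scales each root coordinate by $\lambda(t)^{-\alpha}=t^{-\langle\alpha,v\rangle}$, which tends to $0$ as $t\to\infty$. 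Since $\imath_e$ is a morphism, $\lambda(t)\cdot\exp(x)B$ converges to $\imath_e(0)=eB$, so $eB\in\overline{\lambda(\C^*).gB}\subseteq\overline{H.gB}$.

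For the reverse inclusion, suppose $eB\in\overline{H.gB}$. Here I would use only that $F_e$ is open in $F$, is $H$-invariant, and contains $eB$. Because $H$ is connected, $\overline{H.gB}$ is irreducible and the orbit $H.gB$ is open and dense in it. Thus $\overline{H.gB}\cap F_e$ is a nonempty open subset of the irreducible variety $\overline{H.gB}$, so it meets the dense orbit: some $h.gB$ lies in $F_e$, and the $H$-invariance of $F_e$ forces $gB\in F_e$.

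The computational core is the degeneration in the second step, and the point I would be most careful about is the direction of the flow: it is precisely the fact that all tangent weights at $eB$ are the negative roots (Lemma~\ref{lemma:fixedlocus}), hence lie strictly on one side, that guarantees a single regular $v$ works and that $eB$ is genuinely the limit inside the projective variety $F$ rather than a point escaping the chart. The reverse inclusion is, by contrast, formal once one invokes that orbits are open in their closures; the only thing to verify there is that $F_e$ is $H$-invariant, which is immediate from the $H$-equivariance of $\imath_e$.
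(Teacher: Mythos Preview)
Your proof is correct and follows essentially the same approach as the paper: reduce to $\sigma=e$, exhibit the limit $eB$ via a regular one-parameter subgroup acting on the root coordinates of $\fn$, and deduce the reverse inclusion from the $H$-invariance of the open set $F_e$. The only cosmetic difference is that for the reverse inclusion the paper argues the contrapositive in one line (if $gB\notin F_e$ then $\overline{H.gB}\subseteq F\setminus F_e$ since the latter is closed and $H$-invariant), whereas you reach the same conclusion via irreducibility of $\overline{H.gB}$ and density of the orbit; both are standard and equivalent.
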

    \begin{proof}
        It suffices to show that the equality holds for $\sigma=e$. Consider a point $gB\in F$, so that we can write $g=\exp\left(\sum_{\alpha\in\Phi^+}g_{-\alpha}\right)$ for some $g_{-\alpha}\in\fg_{-\alpha}$. Then choose any $\mu\in\No(H)$ such that $\mu(\alpha)<0$ for all $\alpha\in\Phi^+$. It follows that \[t^\mu gB=\exp\left(\sum_{\alpha\in\Phi^+}t^{-\mu(\alpha)}g_{-\alpha}\right)B \,\,\,\mbox{for every}\,\, t\in\C^*,\] thus $eB=\lim_{t\to 0}t^\mu gB\in\overline{H.gB}$. Vice versa, since $F\setminus F_e$ is an $H$-invariant closed subset, if $gB\in F\setminus F_e$, we have $\overline{H.gB}\subseteq F\setminus F_e$ so, in particular, $eB\notin \overline{H.gB}$.
    \end{proof}

It is known \cite{CK00,Hu92} that, for  torus actions on projective varieties, the polytope of fixed points associated to a linearization of an ample line bundle coincides with the moment polytope of the toric variety obtained as the closure of the general orbit in the ambient projective variety. As a consequence of Proposition \ref{prop:Fsigma}, the closure of the orbit of a general point in a flag variety $F=G/B$ by the action of a maximal torus $H\subset G$ ($G$ semisimple) is a projective toric variety defined by the polytope of fixed points of the $H$-action on $(F,L)$, for any ample line bundle $L$ on $F$. In our symplectic setting, this general property can be stated as follows.

\begin{proposition}\label{prop:genorb}
    The closure of the general $H$-orbit in $F$ is the $H$-toric variety whose moment polytope is the octagon $P$. In particular, it is a toric degeneration of a del Pezzo surface of degree $4$. 
\end{proposition}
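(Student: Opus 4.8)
The plan is to prove the statement in two stages: first identify the general orbit closure as the toric surface attached to the octagon $P$, and then read off its degree and recognize it as a degeneration of a smooth quartic del Pezzo. For the first stage I would begin by noting that, since $H\cong(\C^*)^2$ acts effectively on $F$, its generic stabilizer is trivial, so for $x$ in a dense $H$-invariant open subset the orbit $H.x$ is isomorphic to $H$ and $S:=\overline{H.x}$ is a $2$-dimensional toric variety with dense torus $H.x$. I would then pin down $S^H$: it is contained in $F^H=\{\sigma B:\sigma\in W\}$ by Lemma \ref{lemma:fixedlocus}, and by Proposition \ref{prop:Fsigma} one has $\sigma B\in\overline{H.x}$ exactly when $x\in F_\sigma$. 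Choosing $x$ in the dense open set $\bigcap_{\sigma\in W}F_\sigma$ (nonempty, as each $F_\sigma$ is dense open), all eight points $\sigma B$ lie in $S$, so $S^H=F^H$. Since the $H$-linearized $L$ sends these eight fixed points to the weights $-\sigma(\lambda_{\mathrm{min}})$, which by the Remark following Lemma \ref{lemma:fixedlocus} are precisely the vertices of the octagon $P$, the polytope of fixed points of $(S,L|_S)$ is $P$; invoking the principle recalled above (\cite{CK00,Hu92}), which identifies this with the moment polytope of the general orbit closure, this realizes $S$ as the $H$-toric surface with moment polytope $P$.

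For the second stage I would compute the normal fan of $P$: its eight edges have primitive normals $(\pm1,0)$, $(0,\pm1)$, $(\pm1,\pm1)$, so $S$ is the smooth complete toric surface on these eight rays (each adjacent pair is a lattice basis). Noether's formula for a smooth rational surface then gives $K_S^2=12-\chi_{\mathrm{top}}(S)=12-8=4$, since the eight rays yield eight torus-fixed points.

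To make the del Pezzo degeneration explicit I would pass to the anticanonical model. The polytope $P_{-K_S}=\{m:\langle m,v_i\rangle\ge-1\}$ is the square $\{|x+y|\le1,\ |x-y|\le1\}$, whose normal fan carries only the four rays $(\pm1,\pm1)$; hence the four axis rays of $S$ are $(-2)$-curves, contracted by $-K_S$ to four $A_1$ points. This produces a Gorenstein toric del Pezzo $\bar S$ of Picard number two and degree $(-K_{\bar S})^2=2\,\mathrm{vol}(P_{-K_S})=4$ with four $A_1$ singularities, of which $S$ is the crepant minimal resolution (so $K_{\bar S}^2=K_S^2=4$). Since $\bar S$ has only Du Val singularities it is smoothable, and its smoothing is a smooth del Pezzo surface of the same degree $4$; thus $S$ is a toric degeneration of a quartic del Pezzo.

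The routine inputs here — triviality of the generic stabilizer, density of $\bigcap_\sigma F_\sigma$, and the fan, Noether, and polytope-volume computations — are standard, and the identification of the orbit closure with the toric surface of $P$ is immediate from the machinery already set up. The one step demanding genuine care, and the main obstacle, is the last one: justifying that $\bar S$ really occurs as the central fibre of a flat family whose general fibre is a smooth degree-$4$ del Pezzo. I would handle this by appealing to the smoothability of Gorenstein (Du Val) del Pezzo surfaces together with the constancy of $K^2$ in flat families, rather than constructing the family explicitly; this is the only place I expect to need an external result beyond the combinatorics of $P$. (One should also record that the orbit closure is normal, so that it coincides with the toric surface of $P$ on the nose; for the general orbit in this surface case this can be checked directly.)
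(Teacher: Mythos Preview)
Your argument for the first claim is essentially the paper's: the paper simply invokes the cited result from \cite{CK00,Hu92} together with Proposition~\ref{prop:Fsigma} to identify the moment polytope of the general orbit closure with the polytope of fixed points $P$, and you spell out the same mechanism in more detail (checking via Proposition~\ref{prop:Fsigma} that $S^H=F^H$, so that the polytope of fixed points of $(S,L|_S)$ agrees with that of $(F,L)$). For the ``in particular'' clause the paper gives no formal proof --- it only records the structure of $S$ in Remark~\ref{rem:genorb} --- so your explicit fan, Noether, and anticanonical-polytope computations go beyond what the paper provides and are correct.

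The one step that needs tightening is the passage from smoothability of the anticanonical model $\bar S$ to the conclusion that $S$ itself is a degeneration of a smooth quartic del Pezzo: a smoothing of $\bar S$ is a family with central fibre $\bar S$, not $S$, so your ``thus $S$ is a toric degeneration'' does not follow immediately. You would need either a simultaneous resolution of the Du Val family (available after base change, by Brieskorn--Tyurina), or --- more directly and avoiding $\bar S$ altogether --- the observation that $S$ is already a \emph{smooth} weak del Pezzo of degree $4$ (as your fan computation shows), hence a blowup of $\P^2$ in five points in almost general position, which deforms to a smooth del Pezzo simply by moving the five points to general position. This second route is cleaner and matches the description in Remark~\ref{rem:genorb}.
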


\begin{remark}\label{rem:genorb}
  The projective toric surface $S$ with moment polytope $P$ can be described as the blowup of a del Pezzo surface of degree six in two points, intersection of $(-1)$-curves, $P_1=E_1\cap E'_1$, $P_2=E_2\cap E'_2$, satisfying that $(E_1\cup E'_1)\cap (E_2\cup E'_2)=\emptyset$. The strict transforms in $S$ of those four $(-1)$-curves have self-intersection $-2$. The anticanonical linear system in $S$ is basepoint free, and the corresponding projective morphism is a contraction of the four $(-2)$-curves described above, onto a toric log del Pezzo surface of Picard number two and degree four, with four $\DA_1$ singularities (see \cite{KKN} for generalities on toric log del Pezzo surfaces).  
\end{remark}

\subsection{Combinatorial quotients of $H$-invariant affine charts} 

We will denote by $X_\sigma$ the combinatorial quotient of $F_\sigma$ by the action of $H$, for every $\sigma\in W$. 

\begin{remark}\label{rem:Xsigma}
    Given $h\in H$, we have a commutative diagram
    \[
    \begin{tikzcd}[column sep = large]
            F_e\rar["h"]\dar["\sigma"']&F_e\dar["\sigma"]\\
            F_\sigma\rar["\conj_\sigma(h)"']& F_\sigma
    \end{tikzcd}
    \]
    So, although $\sigma:F_e\to F_\sigma$ is not $H$-equivariant, we still have that $\sigma$ induces an isomorphism among the combinatorial quotients of $F_e$ and $F_\sigma$, denoted by
    \[
    q_\sigma:X_e\to X_\sigma.
    \]
    On the other hand, we have a $H$-equivariant birational map provided by inclusions $F_e,F_\sigma\subset F$ which then induces a birational map among the corresponding combinatorial quotients, denoted by 
    \[
    \jmath_\sigma:X_e\dashrightarrow X_\sigma.
    \]
\end{remark}

\begin{definition}\label{def:bsigma}
For every $\sigma \in W$ we denote by $b_\sigma\in \Bir(X_e)$ the composition
\[
b_\sigma:=\jmath_{\sigma^{-1}}\circ q_{\sigma}:X_e \lra X_\sigma \dashrightarrow X_e.
\]
The above correspondence obviously defines a group homomorphism, denoted by \[b:W\to \Bir(X_e).\]
\end{definition}

\begin{proposition}\label{prop:combiquot}
The combinatorial quotients $X_\sigma$ are projective surfaces. 
\end{proposition}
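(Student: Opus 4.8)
The plan is to reduce to the case $\sigma=e$ and then realize $X_e$ explicitly as a complete toric surface, so that projectivity becomes automatic in dimension two.

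By Remark \ref{rem:Xsigma} the morphism $q_\sigma:X_e\to X_\sigma$ is an isomorphism for every $\sigma\in W$, so it is enough to treat $X_e$. Through the embedding $\imath_e:\fn\stackrel{\simeq}{\lra}F_e$, with $\fn$ as in \eqref{eq:nilpotent}, the chart $F_e$ is the affine space $\fn\simeq\C^4$ with its standard toric structure: its big torus is $T\simeq(\C^*)^4$ and its fan $\Sigma\subset\No(T)_\R\simeq\R^4$ is the positive orthant $\sigma_0=\cone(u_1,u_2,u_3,u_4)$ together with its faces, the four rays $u_i$ corresponding to the root lines $\fg_{-\alpha_1},\fg_{-\alpha_2},\fg_{-\alpha_1-\alpha_2},\fg_{-2\alpha_1-\alpha_2}$. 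The subtorus $H\subset T$ acts on these four coordinates with the corresponding negative roots as weights; equivalently, the inclusion $H\hookrightarrow T$ is dual to the surjection $\Mo(T)=\Z^4\to\Mo(H)$ sending the standard basis to $-\alpha_1,-\alpha_2,-\alpha_1-\alpha_2,-2\alpha_1-\alpha_2$. This determines $\No(H)\hookrightarrow\No(T)$ and the projection $\pi:\No(T)_\R\to\No(T/H)_\R\simeq\R^2$ with kernel $\No(H)_\R$.

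By construction $X_e$ is the $T/H$-toric variety attached to the quotient fan $\Sigma/H$ in $\No(T/H)_\R\simeq\R^2$; being a normal toric variety for a rank-two torus, it is automatically an irreducible surface, which settles that part of the statement. For projectivity it suffices to prove that $\Sigma/H$ is \emph{complete}, since classically every complete toric surface is projective. The support of $\Sigma/H$ is $\pi(|\Sigma|)=\pi(\sigma_0)=\cone(\pi(u_1),\dots,\pi(u_4))$, so completeness is equivalent to the four projected rays $\pi(u_i)$ positively spanning $\R^2$.

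This last point is the heart of the argument. Since $\pi$ is surjective, the $\pi(u_i)$ span $\R^2$ as a vector space, so positive spanning is equivalent to the existence of a strictly positive relation $\sum_i\lambda_i\,\pi(u_i)=0$; unwinding $\ker\pi=\No(H)_\R$, such a relation is exactly a vector of $\No(H)_\R$ lying in the interior of the orthant $\sigma_0$, that is a one-parameter subgroup $\mu\in\No(H)_\R$ with $\mu(\alpha)<0$ for all $\alpha\in\Phi^+$. But this is precisely the regular cocharacter used in the proof of Proposition \ref{prop:Fsigma}, whose existence reflects the fact that all positive roots lie in an open half-space; concretely, choosing a basis one finds $\pi(u_1),\dots,\pi(u_4)$ equal (up to the unimodular choice of splitting of $\pi$) to $(-1,-2),(-1,-1),(1,0),(0,1)$, for which $\tfrac17\pi(u_1)+\tfrac17\pi(u_2)+\tfrac27\pi(u_3)+\tfrac37\pi(u_4)=0$. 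Hence $\Sigma/H$ is complete, $X_e$ is projective, and the same follows for every $X_\sigma$ via $q_\sigma$. The only real obstacle is establishing completeness; once the positive spanning of the $\pi(u_i)$ is in place --- either through the explicit relation or conceptually through the regular cocharacter of Proposition \ref{prop:Fsigma} --- projectivity is a formal consequence of working in dimension two.
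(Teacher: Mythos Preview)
Your proof is correct and follows essentially the same approach as the paper: reduce to $\sigma=e$ via Remark~\ref{rem:Xsigma}, identify $F_e\simeq\fn\simeq\C^4$ with its $H$-weight toric structure, and verify that the quotient fan in $\No(T/H)_\R\simeq\R^2$ is complete (hence projective in dimension two). The paper simply writes down the cokernel matrix and reads off the four ray generators directly, whereas you supplement the explicit computation with the conceptual observation that completeness amounts to the projected rays positively spanning $\R^2$, equivalently to the existence of a regular anti-dominant cocharacter in $\No(H)$---a pleasant addition, but not a different route.
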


\begin{proof}
By Remark \ref{rem:Xsigma}, we have that $X_\sigma\simeq X_e$, so it is enough to consider the case $\sigma=e$.

Let us consider the $H$-eigenspace decomposition of $\fn$ (see Equation (\ref{eq:nilpotent})), and denote by $T\subset \fn$ the $4$-dimensional torus defined as the set of elements whose projections to the $H$-eigenspaces  are different from zero. By choosing a basis of $H$-eigenvectors of $\fn$ we get induced isomorphisms $T\simeq (\C^*)^4$, $\No(T)\simeq \Z^4$ such that, by identifying $\No(H)=\Z(\alpha_1,\alpha_2)^\vee$ with $\Z^2$, the action of $H$ on $\fn$ is determined by the lattice monomorphism $\No(H)\to \No(T)$ given by the matrix:
\[
-\begin{pmatrix}1&0\\0&1\\1&1\\2&1\end{pmatrix}.
\]
The quotient map $\pi:\No(T)\to \No(T/H)$ can be written as:
\[\xymatrix@R=-2mm@C=3mm{&{\begin{pmatrix}-1&-1&1&0\\-1&0&-1&1\end{pmatrix}}&\\
\No(T)\ar[rr]&&\No(T/H)
}\]
One then easily gets that the quotient fan defining $X_e$ is the complete fan $\Sigma$ having the column vectors of the above matrix as ray generators. We have represented it in Figure \ref{fig:quotfan}.
\begin{figure}[ht!!]
    \centering
    \includegraphics[width=0.2\linewidth]{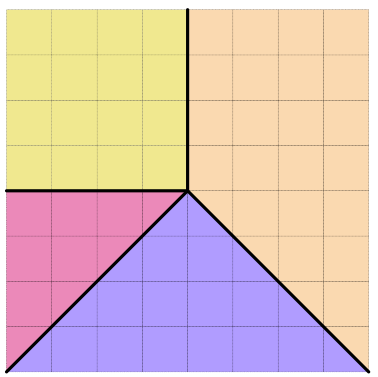}
    \caption{The quotient fan of $\fn$ by the action of $H$}
    \label{fig:quotfan}
\end{figure}
In particular, $X_e$ is a complete normal toric surface, hence projective.
\end{proof}

\begin{remark}\label{rem:combiquot}
    Note that the quotient fan computed above allows us to describe $X_e$ in the following two ways:
    \begin{itemize}[leftmargin=6mm]
        \item $X_e$ is the blowup of a quadric cone surface $Q^2$ on a smooth point;
        \item $X_e$ is a weighted blowup of $\P^2$.
    \end{itemize}

In other words, we have two contractions of $X_e$, one to $Q^2$, and one to $\P^2$, that are represented by the following inclusions of moment polytopes:

\begin{figure}[h!]
\centering
\resizebox{0.6\textwidth}{!}{%
\begin{tikzpicture}
     \node at (0,0) {\includegraphics[width=0.5\textwidth]{ 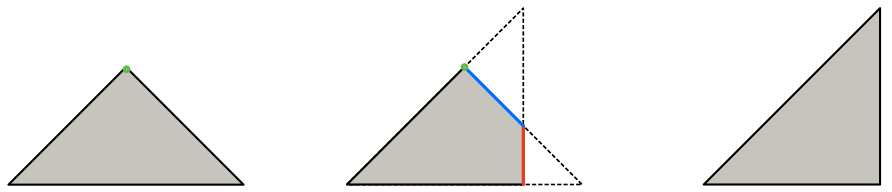}};
     \draw[->] (-0.8,-0.2) -- (-1.6,-0.2);
     \draw[->] (1.1,-0.2) -- (1.9,-0.2);
     \node[scale=0.8] at (-2.5,-1) {$Q^2$};
     \node[scale=0.8] at (0,-1) {$X_e$};
     \node[scale=0.8] at (2.5,-1) {$\P^2$};
     \draw[->] (-0.3,-1) -- (-2.2,-1);
     \draw[->] (0.3,-1) -- (2.2,-1);
     \node[scale=0.8] at (1.25,-1.2) {$p$};
     \node[scale=0.8] at (-1.25,-1.2) {$p'$};
     \node[scale=0.7] at (0.65,-0.5) {\color{red}$A$};
     \node[scale=0.7] at (0.4,0.15) {\color{blue}$B$};
     \end{tikzpicture}
}
\end{figure}

We will denote these maps by $p:X_e\to \P^2$, $p':X_e\to Q^2$, and their exceptional divisors by $A,B$, respectively. Note that $X_e$ has a singular point on the exceptional divisor $B$ (represented as a green point in the above figure).
\end{remark}


\begin{lemma}\label{lem:twoGIT}
With the same notation as above, the images of the contractions $p,p'$ are geometric GIT quotients of $F_e$, and $X_e$ is a minimal resolution of indeterminacies of the induced rational map among these two quotients.
\end{lemma}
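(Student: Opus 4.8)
The plan is to carry out the whole argument combinatorially, identifying the GIT quotients of $F_e$ with those of the $H$-module $\fn$ under the equivariant isomorphism $\imath_e$. As recorded in the proof of Proposition \ref{prop:combiquot}, the coordinates $x_1,x_2,y,z$ of $\fn\simeq\C^4$ carry the $H$-weights $-\alpha_1,-\alpha_2,-\alpha_1-\alpha_2,-2\alpha_1-\alpha_2$ (the rows of the lattice map $\No(H)\to\No(T)$, cf. (\ref{eq:root-var})). These four weights span the effective cone $\cone(-\alpha_1,-\alpha_2)\subset\Mo(H)_\R$, and the two interior weights $-\alpha_1-\alpha_2,\,-2\alpha_1-\alpha_2$ cut it into three maximal chambers. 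The first step is to invoke the standard toric dictionary for linear torus actions: for a stability character $\theta$ in the interior of a chamber, a point is semistable precisely when $\theta$ lies in the cone generated by the weights of its nonvanishing coordinates, so that each chamber produces a toric GIT quotient.

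I would then compute the two extreme chambers, adjacent to the boundary rays $-\alpha_1$ and $-\alpha_2$ of the effective cone (the middle chamber reproduces all four rays $\pi(e_i)$ and recovers $X_e$ itself). For $\theta$ in the chamber bounded by $-\alpha_1$, every semistable support must contain $x_1$, since $-\alpha_1$ is the only weight on that extremal ray; dividing by the free $\C^*$ scaling $x_1$, the residual subtorus $\{h\in H:h^{-\alpha_1}=1\}$ acts on $(x_2,y,z)$ with weights $(1,1,1)$, so the quotient is $\P^2$. Symmetrically, in the chamber bounded by $-\alpha_2$ the coordinate $x_2$ is forced nonzero, and the residual $\C^*$ acts on $(x_1,y,z)$ with weights $(1,1,2)$, giving the quadric cone $Q^2\simeq\P(1,1,2)$. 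Comparing rays, the quotient to $\P^2$ has fan obtained from $\Sigma$ by deleting $\pi(e_1)$, and the quotient to $Q^2$ by deleting $\pi(e_2)$; these are exactly the exceptional rays of the contractions of Remark \ref{rem:combiquot}, so the two quotients coincide with the images of $p$ and $p'$, with exceptional divisors $A$ and $B$.

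For geometricity, since each extreme chamber is full-dimensional and $\theta$ is taken in its interior, there are no strictly semistable points, so semistable equals stable and the quotient map is a geometric quotient. A direct stabilizer computation shows the $H$-action is free on the semistable locus over $\P^2$, while over $Q^2$ the only nontrivial stabilizer is the $\Z/2$ fixing the point with $x_1=y=0$, which accounts for the single $\DA_1$ point; in both cases one obtains an honest orbit space, hence a geometric GIT quotient of $F_e$.

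It remains to identify $X_e$ as the minimal resolution of indeterminacy of $p'\circ p^{-1}\colon\P^2\dashrightarrow Q^2$. Here I would invoke that a toric variety admits a toric morphism to $X_{\Sigma'}$ exactly when its fan refines $\Sigma'$: a common resolution must refine both the fan of $\P^2$ (rays $\pi(e_2),\pi(e_3),\pi(e_4)$) and that of $Q^2$ (rays $\pi(e_1),\pi(e_3),\pi(e_4)$), hence refine their common refinement, which is precisely $\Sigma$. As each of $p,p'$ contracts a single exceptional divisor and no curve of $X_e$ can be contracted while keeping both maps regular, $X_e$ is the minimal such resolution. I expect the only genuinely delicate point to be the geometricity claim: one must rule out strictly semistable points on each extreme chamber and correctly pin down the finite stabilizers, in particular the $\Z/2$ responsible for the $\DA_1$ singularity of $Q^2$; the identification of the quotients and the minimality of the common refinement are then essentially formal.
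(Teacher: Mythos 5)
Your argument is correct, and it arrives at the same two open sets and the same identification of the quotients as the paper, but by a genuinely different route. The paper works entirely with the fan of $F_e$ as a $T$-toric variety: it lists the two subfans $\Sigma(\tau)_p,\Sigma(\tau)_{p'}$ of the orthant explicitly, observes that their cones project isomorphically under $\No(T)\to\No(T/H)$ (so all $H$-orbits in the corresponding open sets $U_p,U_{p'}$ are two-dimensional and the induced toric maps are geometric quotients), notes that $\Sigma$ is the coarsest common refinement of $\Sigma_p,\Sigma_{p'}$, and then invokes \cite[Lemma 2.4]{BOS2} to conclude that these are GIT quotients for suitable linearizations. You instead run variation of GIT for the linear $H$-action on $\fn$: the chamber decomposition of $\cone(-\alpha_1,-\alpha_2)$ by the interior weights $-\alpha_1-\alpha_2,\,-2\alpha_1-\alpha_2$ gives three quotients, the two extreme chambers force $x_1\neq 0$ (resp.\ $x_2\neq 0$), and the residual weights $(1,1,1)$ and $(1,1,2)$ yield $\P^2$ and $Q^2$. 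Your semistable loci are exactly the paper's $U_p,U_{p'}$ (compare Remark \ref{rem:geomP2}), and your chamber-interior argument ruling out strictly semistable points replaces the paper's ``cones project isomorphically'' check; note that geometricity only needs finite stabilizers, so the $\Z/2$ over the vertex of $Q^2$ that you compute is consistent rather than an obstacle. What your approach buys is a self-contained proof that the two surfaces really are (geometric) GIT quotients of the affine chart with respect to characters, a point the paper outsources to the cited lemma; what the paper's bookkeeping buys is that the identical subfan computation is reused in Corollary \ref{cor:combGITlimit} for the middle chamber, which — as you observe in passing — recovers $X_e$ itself. The minimality argument via the coarsest common refinement is the same in both; only mind the small notational slip that the ray generators are the $\pi(w_i)$, not $\pi(e_i)$.
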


\begin{proof}
Let us denote by  $\{w_1,w_2,w_3,w_4\}$ the $\Z$-basis of $\No(T)$ induced by the choice of a basis of $H$-eigenvectors of $\fn$ as in the proof of Proposition \ref{prop:combiquot}. The fan $\Sigma(\tau)$ of $F_e$ as a $T$-toric variety is the fan of faces of the positive orthant $\tau$, generated by the $w_i$'s.
For every subset $I\subset \{1,2,3,4\}$, we will denote by $\tau_I$ the cone generated by $w_i$, $i\in I$, and set $\tau_\emptyset=\{0\}$.

Let us consider the following subfans of $\Sigma(\tau)$:
\begin{itemize}
    \item $\Sigma(\tau)_p\hspace{0.1cm}:=\big\{\tau_I\,\,|\,\,I=\emptyset,\{2\},\{3\},\{4\},\{2,3\},\{2,4\},\{3,4\}\big\},$
    \item $\Sigma(\tau)_{p'}:=\big\{\tau_I\,\,|\,\,I=\emptyset,\{1\},\{3\},\{4\},\{1,3\},\{1,4\},\{3,4\}\big\}.$
\end{itemize}
The inclusion of these subfans into $\Sigma(\tau)$ defines two open subsets  $U_p:=X(\Sigma(\tau)_p)$, $U_{p'}:=X(\Sigma(\tau)_{p'})$ of $F_e=X(\Sigma(\tau))$. Moreover, these two subfans satisfy that their cones project via $\No(T)\to \No(T/H)$ isomorphically to the cones of two fans $\Sigma_p, \Sigma_{p'}$; in other words, all the $H$-orbits on the open sets $U_p,U_{p'}$ are two dimensional,  and the induced maps
\[U_p\lra X(\Sigma_p),\quad U_{p'}\lra X(\Sigma_{p'})\]
are geometric quotients. 
Finally, the fan $\Sigma$ that defines $X_e$ is the coarsest common refinement of $\Sigma_p,\Sigma_{p'}$, so we get a minimal resolution of indeterminacies. 
\[
\begin{tikzcd}
    &X_e=X(\Sigma)\ar[dr,"p"]\ar[dl,"p'"']\\
    X(\Sigma_{p'})=Q^2\ar[rr, dashed]& &\P^2=X(\Sigma_p)
\end{tikzcd}
\] In conclusion, following \cite[Lemma 2.4]{BOS2}, we have that $X(\Sigma_p), X(\Sigma_{p'})$ are GIT quotients of $F$ by suitable linearizations of the $H$-action.
\end{proof}

\begin{corollary}\label{cor:combGITlimit}
    The combinatorial quotient $X_e$ is an inverse limit of GIT quotients of $F$. In particular, there exists an induced morphism $\pi_e:X\to X_e$.
\end{corollary}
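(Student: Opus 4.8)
The plan is to read off the first assertion directly from the combinatorial description in Lemma~\ref{lem:twoGIT}, and then to produce $\pi_e$ from the universal property of the limit quotient supplied by Proposition~\ref{prp:BHK}. First I would recall the global picture: by Proposition~\ref{prp:BHK} the normalized Chow quotient $X=\cC F$ is isomorphic to the normalized limit quotient $\mathcal{L}F$, so it is the normalization of the distinguished component (the one meeting the image of $\GU$) of the inverse limit of the full GIT inverse system of $F$, and it carries compatible birational projections $\chi_u\colon X\to \mathcal{G}F_u$ satisfying $\chi_u=f_{v,u}\circ\chi_v$ whenever $F^{ss}_v\subseteq F^{ss}_u$.

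For the first statement, Lemma~\ref{lem:twoGIT} identifies $\P^2=X(\Sigma_p)$ and $Q^2=X(\Sigma_{p'})$ as GIT quotients of $F$, say $\P^2=\mathcal{G}F_{u_1}$ and $Q^2=\mathcal{G}F_{u_2}$, and exhibits the fan $\Sigma$ of $X_e$ as the coarsest common refinement of $\Sigma_p$ and $\Sigma_{p'}$. I would then invoke the GIT chamber structure to produce a common GIT quotient $W_0=\mathcal{G}F_{u_0}$, attached to a wall separating the chambers of $u_1$ and $u_2$, onto which both $\P^2$ and $Q^2$ contract; combinatorially this means that both $\Sigma_p$ and $\Sigma_{p'}$ refine the fan $\Sigma_0$ of $W_0$. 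Since the coarsest common refinement of two fans over a common coarsening computes the normalization of the associated toric fiber product, this realizes $X_e$ (which is normal by Proposition~\ref{prop:combiquot}) as the normalization of $\P^2\times_{W_0}Q^2$, i.e. as the inverse limit of the finite subsystem $\{\mathcal{G}F_{u_0},\mathcal{G}F_{u_1},\mathcal{G}F_{u_2}\}$ of GIT quotients of $F$. This establishes that $X_e$ is an inverse limit of GIT quotients.

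For the induced morphism I would argue as follows. The projections $\chi_{u_1}\colon X\to\P^2$ and $\chi_{u_2}\colon X\to Q^2$ are compatible over $W_0$, since both compose with the respective contractions to the single projection $\chi_{u_0}\colon X\to W_0$. By the universal property of the fiber product they induce a morphism $X\to\P^2\times_{W_0}Q^2$, and because $X$ is normal with image in the distinguished component, this factors through the normalization $X_e$, yielding the desired $\pi_e\colon X\to X_e$, automatically commuting with $p$, $p'$ and the $\chi_u$.

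The step I expect to be the main obstacle is the combinatorial-to-categorical matching in the second paragraph: verifying that the coarsest common refinement of $\Sigma_p$ and $\Sigma_{p'}$ is precisely the normalization of the fiber product $\P^2\times_{W_0}Q^2$, and that this normalization is the distinguished component met by the image of $X$. This requires pinning down the wall quotient $W_0$ as an honest GIT quotient of $F$ (so that the two contractions really come from the GIT inverse system) and checking the component issue for the fiber product. Once this identification is secured, the existence of $\pi_e$ is a purely formal consequence of the universal properties.
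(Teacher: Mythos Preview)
Your route is workable in spirit but takes a detour around the paper's much shorter argument, and the detour has a real gap. The paper does not realize $X_e$ as a fiber product of $\P^2$ and $Q^2$ over a wall quotient; instead it observes directly that $X_e$ is \emph{itself} a geometric GIT quotient of $F$. Concretely, it exhibits a subfan $\Sigma(\tau)^\circ\subset\Sigma(\tau)$ (consisting of the zero cone, the four rays, and the four $2$-dimensional faces $\tau_{\{1,2\}},\tau_{\{2,4\}},\tau_{\{3,4\}},\tau_{\{1,3\}}$) whose cones are mapped by $\pi\colon\No(T)\to\No(T/H)$ bijectively and dimension-preservingly onto the cones of $\Sigma$. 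Exactly as in the proof of Lemma~\ref{lem:twoGIT}, this means that the open set $X(\Sigma(\tau)^\circ)\subset F_e$ has only $2$-dimensional $H$-orbits and the induced map to $X(\Sigma)=X_e$ is a geometric quotient, hence a GIT quotient of $F$. Then $X_e$ sits in the GIT inverse system (dominating $\P^2$ and $Q^2$), so $\pi_e\colon X\to X_e$ is simply one of the structural projections $\chi_u$ from $X\simeq\mathcal{L}F$; no fiber-product argument is needed.

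The gap in your version is precisely the step you flagged: the assertion that ``the coarsest common refinement of two fans over a common coarsening computes the normalization of the associated toric fiber product'' is not a standard fact and can fail on the nose (when two toric blowups lie over the same cone, the scheme-theoretic fiber product acquires an extra $2$-dimensional fiber and is reducible). You would need to identify the wall quotient $W_0$ explicitly, check that the exceptional loci of $p$ and $p'$ lie over distinct points of $W_0$, and then argue that the main component of the fiber product is already normal and toric with fan $\Sigma$. All of this is bypassed by noticing that $\Sigma$ itself arises from a subfan of $\Sigma(\tau)$ with isomorphic projection, which is a one-line check.
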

\begin{proof}
   Setting:  \[\Sigma(\tau)^\circ:=\big\{\tau_I\,\,|\,\,I=\emptyset,\{1\},\{2\},\{3\},\{4\},\{1,2\},\{2,4\},\{3,4\},\{1,3\}\big\}\subset\Sigma(\tau),\]
    one may check that the projection $\No(T)\to \No(T/H)$ sends $\Sigma(\tau)^\circ$ to $\Sigma$, preserving the dimension of the cones. As in Lemma \ref{lem:twoGIT}, we may then assert that $X_e$ is a geometric GIT quotient of $F$ by the action of $H$. In particular, $X_e$ is the inverse limit of three geometric GIT quotients of $F$ hence, by Proposition \ref{prp:BHK}, there exists an induced morphism from the limit quotient $X$ to $X_e$. 
\end{proof}
\begin{remark}\label{remark:combGITlimit}
    Since the induced map $\pi_e:X\to X_e$ is birational by construction, and $X_e$ is normal, then it is a contraction of $X$. In the same way, for every $\sigma\in W$ we get a contraction
    \[\pi_\sigma:X\lra X_\sigma.\]
    Note that, by construction, $\pi_\sigma\circ\pi_e^{-1}=\jmath_\sigma$, for every $\sigma\in W$.

    Now denote by $\tilde X$ the normalization of the image of the product morphism
    \begin{equation*}
        (\pi_\sigma)_{\sigma\in W}: X\longrightarrow\prod_{\sigma\in W} X_\sigma
    \end{equation*}
    and let $\tilde \pi: X\to\tilde X$ be the induced morphism.
\end{remark}

\begin{proposition}\label{prp:invlim}
    The map $\tilde\pi:X\to \tilde X$ is an isomorphism.
    \begin{proof}
        By construction, $\tilde\pi$ is a birational map between two normal varieties, thus it suffices to prove that it is finite. Assume by contradiction that $\tilde\pi$ contracts an irreducible curve $C\subset X$. Then every $\pi_\sigma$ contracts $C$, too, and so every $H$-invariant cycle parametrized by $C$ has the same fixed component in each $F_\sigma$. This contradicts that $X=\bigcup_{\sigma\in W} F_\sigma$.
    \end{proof}
\end{proposition}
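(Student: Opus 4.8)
The plan is to prove the statement via Zariski's Main Theorem. By construction (Remark \ref{remark:combGITlimit}), $\tilde\pi$ is a birational morphism, and both source and target are normal: $X$ is normal as a normalized limit quotient, and $\tilde X$ is normal as a normalization. Moreover both are projective surfaces, since $\pi_e\colon X\to X_e$ is a birational morphism onto the projective surface $X_e$ of Proposition \ref{prop:combiquot} (so $\dim X=2$, and $\dim\tilde X=2$ because $\tilde\pi$ is dominant). A finite birational morphism onto a normal variety is an isomorphism, and for a morphism of projective surfaces finiteness is equivalent to contracting no curve; hence it suffices to show that $\tilde\pi$ contracts no irreducible curve. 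I would therefore argue by contradiction.

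Suppose $\tilde\pi$ contracts an irreducible curve $C\subset X$ to a point. Since $\tilde X$ is, by definition, the normalization of the image of the product morphism $(\pi_\sigma)_{\sigma\in W}\colon X\to\prod_{\sigma\in W}X_\sigma$, and the normalization map $\tilde X\to\prod_{\sigma\in W}X_\sigma$ is finite, the hypothesis that $\tilde\pi(C)$ is a point forces $(\pi_\sigma)_{\sigma\in W}$ to be constant on $C$. As the product is constant on $C$ if and only if each factor is, every individual contraction $\pi_\sigma\colon X\to X_\sigma$ must then send $C$ to a single point.

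The next step is to read this off at the level of cycles. A point $x\in X$ parametrizes an $H$-invariant algebraic cycle $Z_x\subset F$ through the embedding of the Chow quotient into $\Chow(F)$, and, via the identification of $X_\sigma$ with the combinatorial quotient of the chart $F_\sigma$ together with the cycle-theoretic interpretation of combinatorial quotients in \cite[Proposition 2.1]{KSZ}, the map $\pi_\sigma$ records the local cycle $Z_x\cap F_\sigma$. Thus $\pi_\sigma$ contracting $C$ means that $Z_x\cap F_\sigma$ is independent of $x\in C$, for each fixed $\sigma$. Since the charts $\{F_\sigma\}_{\sigma\in W}$ cover $F$, a cycle is determined by its restrictions to these charts, so all the cycles $Z_x$ with $x\in C$ coincide; equivalently, the finite map $X\to\Chow(F)$ would be constant on $C$, which is absurd. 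This is the contradiction that completes the argument.

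I expect the only genuine difficulty to lie in the third paragraph, namely the cycle-theoretic translation: making precise that $\pi_\sigma$ contracting $C$ is \emph{equivalent} to the local cycles $Z_x\cap F_\sigma$ being constant along $C$, and that this local data pins down the global cycle. This rests on the description of $X_\sigma$ as a combinatorial quotient of $F_\sigma$ and on the compatibility $\pi_\sigma\circ\pi_e^{-1}=\jmath_\sigma$ of Remark \ref{remark:combGITlimit}, rather than on any new geometric input; the reduction to ``no contracted curve'' in the first two paragraphs is purely formal.
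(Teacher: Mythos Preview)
Your proposal is correct and follows essentially the same approach as the paper: both reduce to showing finiteness (via Zariski's Main Theorem), argue by contradiction that a contracted curve $C$ would be contracted by every $\pi_\sigma$, and then use the cycle-theoretic interpretation together with the covering $F=\bigcup_{\sigma\in W}F_\sigma$ to derive a contradiction. Your write-up is more explicit about the cycle-theoretic step (which the paper states tersely as ``every $H$-invariant cycle parametrized by $C$ has the same fixed component in each $F_\sigma$''), but the underlying argument is the same.
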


\begin{remark}\label{rem:invlim}
    By Proposition \ref{prp:invlim}, the Chow quotient $X$ satisfies the following universal property: if $Y$ is a normal irreducible variety with birational morphisms $\varphi_\sigma: Y\to X_\sigma$ such that $\varphi_\sigma=\jmath_\sigma\circ\varphi_e$ for every $\sigma\in W$, then there exists a unique (birational) morphism $\varphi: Y\to X$ satisfying $\varphi_\sigma=\pi_\sigma\circ\varphi$ for all $\sigma\in W$. In this sense, we say that $X$ is the {\it inverse limit} of the combinatorial quotients $X_\sigma$.
\end{remark}

The following observation on the geometric quotients $X(\Sigma_p),X(\Sigma_{p'})$ will be useful later on.

\begin{remark}\label{rem:geomP2}
    The subsets $U_p,U_{p'}$ can be written as:
    \[\begin{array}{l}
U_p=(\fg_{-\alpha_1}\setminus\{0\})\oplus\big(\fg_{-\alpha_2}\oplus\fg_{-\alpha_1-\alpha_2}\oplus\fg_{-2\alpha_1-\alpha_2}\setminus\{(0,0,0)\}\big)\subset\fn,\\
U_{p'}=(\fg_{-\alpha_2}\setminus\{0\})\oplus\big(\fg_{-\alpha_1}\oplus\fg_{-\alpha_1-\alpha_2}\oplus\fg_{-2\alpha_1-\alpha_2}\setminus\{(0,0,0)\}\big)\subset\fn.
\end{array}
    \]
    Note then that the quotients of these open sets by $H$ can be described as certain quotients of the fundamental $\C^*$-actions, namely 
    \[
(\fg_{-\alpha_2}\oplus\fg_{-\alpha_1-\alpha_2}\oplus\fg_{-2\alpha_1-\alpha_2}\setminus\{(0,0,0)\})/\mu_2,\]\[(\fg_{-\alpha_1}\oplus\fg_{-\alpha_1-\alpha_2}\oplus\fg_{-2\alpha_1-\alpha_2}\setminus\{(0,0,0)\})/\mu_1.
    \]
    These are the standard projective space $\P^2$, and the weighted projective space $\P(1,1,2)\simeq Q^2$, respectively. In particular, the image into $\P^2$ corresponding to the closure of orbit of a general element $(x_1,x_2,y,z)\in \fn$ is the point $(x_2:x_1^{-1}y:x_1^{-2}z)$.
\end{remark}

\subsection{The quotient fan and the BB-decomposition}\label{ssec:combiquotBB}

Let us now provide an interpretation of the exceptional divisors $A,B\in X_e$ introduced previously. As we shall see, they are related to the polygon subdivisions shown in Remark \ref{rem:fundaction}.

For brevity, we will consider only the case of the divisor $A$; the case of the divisor $B$ is analogous. We have represented in Figure \ref{fig:combiquotBBA} the cones of the fan of $\Sigma$ involved in the contraction $p'$, and we will consider their counterparts in the fan $\Sigma(\tau)$, which are $\langle w_1,w_4\rangle, \langle w_2\rangle $. 

Consider now the subdivision of the octagon \[P=P^-(2,2)\cup P^+(2,2),\] that we have denoted by $A_2$ in Remark \ref{rem:fundaction}. One may immediately check that the corresponding BB-cells $F^\pm(Y_{2,2})$ satisfy the following property
\[\begin{cases}T_{F^-(Y_{2,2}),eB}=\fg_{-\alpha_2}\oplus\fg_{-\alpha_1-\alpha_2}\\T_{F^+(Y_{2,2}),eB}=\fg_{-\alpha_1-\alpha_2}\oplus \fg_{-2\alpha_1-\alpha_2}\oplus\fg_{-\alpha_1}.\end{cases}\]

\begin{figure}[ht]
    \centering    
    \resizebox{0.2\textwidth}{!}{%
\begin{tikzpicture}
    \node[scale=2] at (0,-3.6) {$A_2$};
    \fill[opacity=0.2,red] (-3,1) -- (-3,-1) -- (-1,-3) -- (-1,3) -- (-3,1);
    \fill[opacity=0.2,green]  (-1,-3) -- (1,-3) -- (3,-1) -- (3,1)-- (1,3) -- (-1,3) -- (-1,-3);
    \draw (1,3) -- (3,1) -- (3,-1) -- (1,-3) -- (-1,-3) -- (-3,-1) -- (-3,1)-- (-1,3) -- (1,3);
    \draw[red, ultra thick] (-1,-3) -- (-1,3);
    \draw[->, ultra thick, green](-1,-3) -- (1,-3);
    \draw[->, ultra thick, green](-1,-3) -- (1,-1);
    \draw[->, ultra thick, green](-1,-3) -- (-1,-1);
    \draw[->, ultra thick, green](-1,-3) -- (-3,-1);
\end{tikzpicture}
}
\hspace{3cm}
    \resizebox{0.25\textwidth}{!}{%
\begin{tikzpicture}
     \node at (0,0) {\includegraphics[width=0.25\textwidth]{ 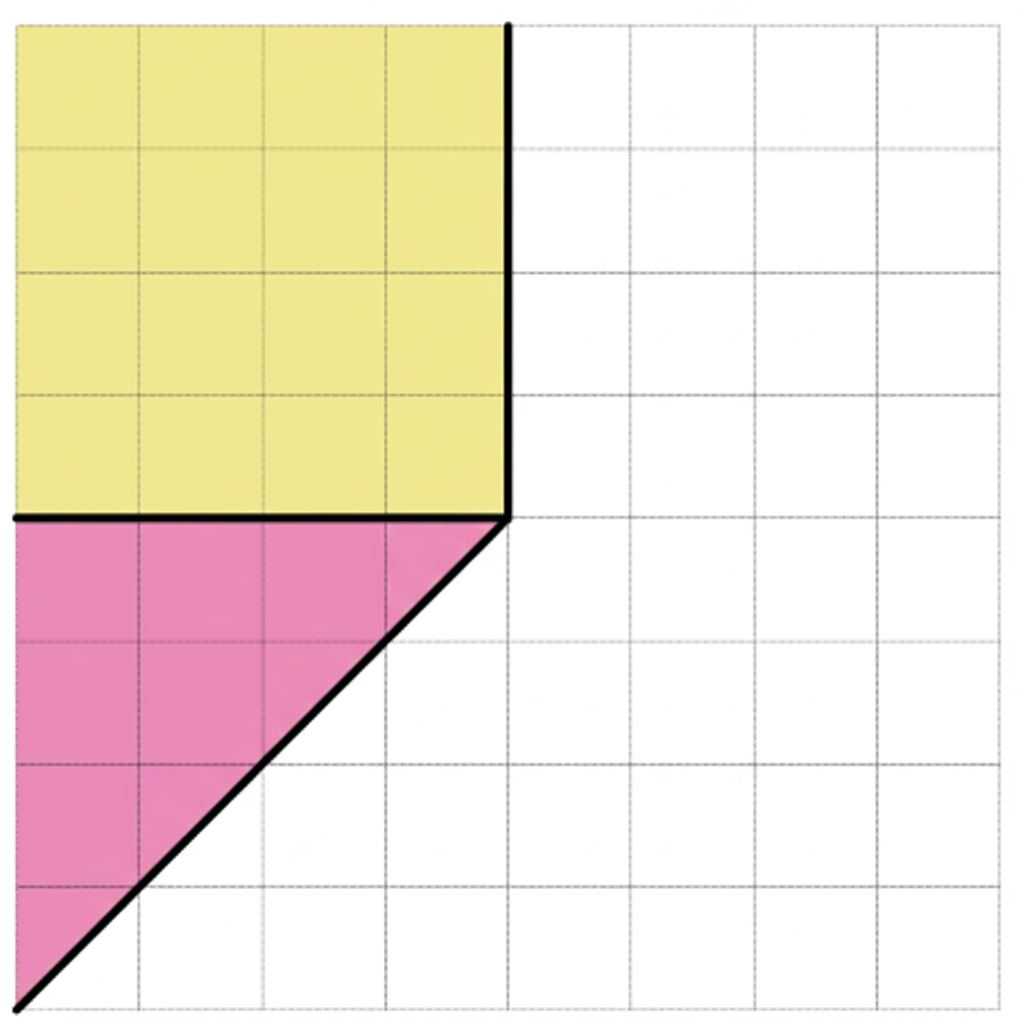}};
     
     \node[scale=0.7] at (0.4,0.65) {$\pi(w_4)$};
     \node[scale=0.7] at (-0.6,0.2) {$\pi(w_2)$};
     \node[scale=0.7] at (-0.45,-0.9) {$\pi(w_1)$};
     \end{tikzpicture}
}
    \caption{\centering The subdivision $A_2$ and a local toric contraction \\of the corresponding contraction.}
    \label{fig:combiquotBBA}
\end{figure}

In other words,  $F^\pm(Y_{2,2})\cap F_e$ are the closures of the $T$-orbits associated to the faces of the orthant $\tau$:
\[\rho^-=\langle w_1,w_4\rangle,\,\, \rho^+=\langle w_2\rangle,\] which turn out to be the cones involved in the contraction $p'$.

     

The interpretation of the above fact is the following. Consider the $T/H$-orbit in $X_e$  associated to the cone $\pi(\rho^+)=\langle\pi(w_2)\rangle$; 
it parametrizes a $1$-dimensional family of $H$-invariant subvarieties, such that:
\begin{itemize}[leftmargin=6mm]
\item in the open set $U_p$ it consists of the $1$-dimensional family of $H$-orbits contained in $O(\rho^+)$;
\item in the open set $U_{p'}$ it consists of a single $H$-orbit, namely $O(\rho^-)$;
\item the closure of $O(\rho^-)$ and the closures of the $H$-orbits in $O(\rho^+)$ contain the orbit $O(\langle w_1,w_2,w_4\rangle )$, which consists of a single $1$-dimensional $H$-orbit.
\end{itemize}
In other words, the curve $O(\pi(\rho^+))\subset X_e$ may be interpreted as a family of reducible $H$-invariant subvarieties, each one consisting of an $H$-orbit in $O(\rho^-)$, the $H$-orbit $O(\rho^+)$, and their common boundary $O(\langle w_1,w_2,w_4\rangle )$. 

\subsection{Boundary divisors} 
    In Section \ref{ssec:combiquotBB} we interpreted the exceptional divisors $A,B$ of the combinatorial quotient $X_e$ in relation with certain subdivisions of the octagon $P$. Repeating the quotient construction for every $F_\sigma$, we get a pair of analogous divisors of type $A,B$ in every combinatorial quotient $X_\sigma$. Such divisors correspond to families of reducible $H$-invariant cycles and their contractions onto the corresponding geometric GIT quotients describe their components in $F_\sigma$.

\begin{definition}
    We call {\it boundary divisors} (or {\it boundary curves}) of $X$ the strict transforms via $\pi_\sigma:X\to X_\sigma$ of the exceptional divisors of type $A,B$ in $X_\sigma$, for every $\sigma\in W$.
\end{definition}

\begin{remark}
    By Proposition \ref{prp:invlim}, boundary divisors of $X$ parametrizes all the reducible $H$-invariant cycles of $F$.
\end{remark}

\begin{remark}
In virtue of their local nature, exceptional divisors in different $X_\sigma$'s may give rise to coincident boundary divisors in $X$. The following result tells us that  boundary divisors correspond to the $8$ subdivisions  of the octagon described above  ($A_1,A_2,A_3,A_4$, and $B_{12},B_{23},B_{34},B_{41}$). In particular, this will allow us to denote boundary divisors by their unique corresponding subdivision of $P$. 
\end{remark}

\begin{proposition}\label{prop:8bounddiv}
    There are $8$ distinct boundary divisors in $X$ corresponding to the subdivisions of type $A,B$ of the octagon $P$.
    \begin{proof}
        The exceptional divisor of type $A$ (resp. $B$) in $X_\sigma$ corresponds to the unique subdivision of type $A$ (resp. $B$) of $P$ associated to the fixed point component through $\sigma B\in F^H$ of some fundamental $\C^*$-action. Therefore, the strict transform in $X$ of two exceptional divisors from different combinatorial quotients coincide if and only if the components of their parametrized $H$-invariant reducible cycles agree along the same fixed point component. Equivalently, exceptional divisors in different combinatorial quotients get transformed into the same boundary divisor of $X$ if and only if they correspond to the same subdivision of $P$. 
    \end{proof}
\end{proposition}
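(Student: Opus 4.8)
The plan is to show that the assignment sending an exceptional divisor of type $A$ or $B$ in some $X_\sigma$ to its strict transform in $X$ factors through the finite set of subdivisions of $P$ from Remark \ref{rem:fundaction}, and that the induced map is a bijection onto the boundary divisors. The first step is to check that the subdivision is an intrinsic invariant of a boundary divisor. By Proposition \ref{prp:invlim} and the inverse-limit description of $X$ (Remark \ref{rem:invlim}), a general point of a boundary divisor corresponds to a reducible $H$-invariant cycle in $F$, whose decomposition into irreducible components is intrinsic. Extending the local computation of Section \ref{ssec:combiquotBB} --- where the divisor $A$ of $X_e$ was shown to parametrize cycles whose two components have moment polytopes $P^-(2,2)$ and $P^+(2,2)$ --- the two half-polygons $P^{\pm}(i,k)$ of a subdivision are exactly the moment polytopes of the two irreducible components of the general cycle parametrized by the corresponding boundary divisor. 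Since moment polytopes of $H$-invariant subvarieties of $F$ are chart-independent, the subdivision $P=P^-(i,k)\cup P^+(i,k)$ is determined by the boundary divisor alone.

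Next I would read off the subdivision from $\sigma$ and the type. Under the isomorphism $q_\sigma\colon X_e\to X_\sigma$ of Remark \ref{rem:Xsigma}, the two exceptional divisors of $X_\sigma$ are the images of $A$ and $B$, hence are again of type $A$ and type $B$. Transporting the computation for $\sigma=e$, the type-$A$ (resp. type-$B$) divisor of $X_\sigma$ corresponds to the unique type-$A$ (resp. type-$B$) subdivision of $P$ whose dividing segment is the inner fixed point component of a suitable fundamental action of type $\mu_2$ (resp. $\mu_1$) passing through the fixed flag $\sigma B$; equivalently, the subdivision whose dividing segment has $\sigma B$ as one of its two endpoints.

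Combining the two previous steps yields the coincidence criterion: two exceptional divisors, from possibly different $X_\sigma$, have coincident strict transforms in $X$ if and only if they correspond to the same subdivision of $P$. The forward direction is the invariance established above, while the reverse direction holds because a subdivision records the entire family of reducible cycles together with the moment polytopes --- hence the components --- of its general member, so two exceptional divisors with equal subdivision parametrize the same family and have the same closure in the inverse limit $X$. It then remains to count. The dividing segment of each type-$A$, and each type-$B$, subdivision joins exactly two vertices of the octagon $P$, i.e.\ two fixed flags; hence exactly two elements of $W$ produce each subdivision. Running over the $8$ elements of $W$ and both types gives $16$ exceptional divisors, which therefore collapse precisely $2$-to-$1$ onto the $4+4=8$ distinct subdivisions $A_1,\dots,A_4,B_{12},\dots,B_{41}$, so $X$ has exactly $8$ boundary divisors.

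The main obstacle is the reverse implication in the coincidence criterion: one must verify that the subdivision is a complete invariant of the family, that is, that the two half-polygons, together with the gluing of the corresponding components along the shared edge, determine the reducible cycle up to the $H$-action. This is what forces the two families to have literally the same closure in $X$ rather than merely intersecting, and it is precisely here that the intrinsic nature of the components of an $H$-invariant cycle and the inverse-limit universal property of $X$ must be used together.
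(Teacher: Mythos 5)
Your argument is correct and follows essentially the same route as the paper's proof: identifying each exceptional divisor of $X_\sigma$ with the unique subdivision of $P$ of the corresponding type whose dividing segment (the inner fixed component of a fundamental $\C^*$-action) passes through $\sigma B$, and showing that strict transforms in $X$ coincide exactly when the subdivisions, i.e.\ the components of the parametrized reducible cycles, agree. Your explicit $2$-to-$1$ count over the $16$ exceptional divisors and your flagging of the reverse implication as the delicate point are welcome elaborations, but they do not change the substance of the argument.
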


\begin{remark}
    The subdivisions of the octagon $P$ also provide a description of the reducible cycles parametrized by the general element in the corresponding boundary divisor in $X$; they will have two components $\overline{F^\pm(Y_{i,k})}$, which are $H$-toric projective varieties whose moment polytopes are the subpolytopes $P^\pm(i,k)\subset P$, respectively.
    By means of the $W$-action on $X$, cycles parametrized by boundary divisors of the same type ($A$ or $B$) have isomorphic components, so it is enough to describe $A_2$ and $B_{41}$. Below we will denote by $S$ the closure of the general orbit in $F$ (see Remark \ref{rem:genorb}).
    \begin{itemize}[leftmargin=6mm]
        \item[($A_2$)] 
        In this case 
        $\overline{F^-(Y_{2,2})}$ is a Hirzebruch surface $\F_2$, and $\overline{F^+(Y_{2,2})}$ is the blowup of $\P^1\times\P^1$ in $2$ points. Both are smooth weak Del Pezzo surfaces, of degrees $8$ and $6$, respectively. The component $\overline{F^+(Y_{2,2})}$ can be obtained as a toric contraction of two $(-1)$-curves  in $S$ (intersecting a common $(-2)$-curve).
        \item[($B_{41}$)] The component $\overline{F^-(Y_{1,4})}$ is a Hirzebruch surface $\F_1$, and $\overline{F^+(Y_{1,4})}$ is singular weak Del Pezzo surface of degree $6$ that can be obtained as a toric contraction of two $(-2)$-curves  in $S$ (intersecting a common $(-1)$-curve). 
    \end{itemize}
    We conclude noting that, in both cases, $Y_{i,k}=\overline{F^-(Y_{i,k})}\cap\overline{F^+(Y_{i,k})}$ is a rational curve, whose points are fixed by the fundamental action $\mu_i\circ g^{k-1}$ (see Remark \ref{rem:fundaction}).
\end{remark}


\section{Proof of the Main Theorem}\label{sec:mainproof}



The main tool of the proof of our result is a control of the group of birational automorphisms of $X_e$ induced by the action of $W$ on $F$, i.e. the image of the homomorphism $b:W\to \Bir(X_e)$ introduced in Definition \ref{def:bsigma}. For simplicity, we will use also the contraction $p:X_e\to \P^2$ in order to identify $\Bir(X_e)\simeq\Bir(\P^2)$.

\begin{definition}\label{def:bsigma2}
The image of the Weyl group $W$ via the group homomorphism $W\to \Bir(X_e)\to \Bir(\P^2)$ will be denoted by $W_p$, and called the {\em Cremona representation of $W$} in the geometric quotient $\P^2$ of $F$.
\end{definition}

\begin{proposition}\label{prop:CremonaWeyl}
    There exists a system of homogeneous coordinates $(x:y:z)$ in $\P^2$ such that
    \begin{itemize}[leftmargin=4mm]
    \item the images of $r_1,r_2\in W$ into $\Bir(\P^2)$ are given by
        \[\begin{cases}
        r_1 : \P^2\longrightarrow\P^2, & (x : y : z)\longmapsto (x-y+z : z : y),\\
        r_2 : \P^2\dashrightarrow\,\P^2, & (x : y : z)\longmapsto (y^2:xy:-xz),
        \end{cases}
        \]
        \item the images in $\P^2$ of the boundary curves are described as in the table below (see Figure \ref{fig:boundary}).
\begin{table}[ht!!]
\centering
\begin{tabular}{|c|c|c|c|}
\hline
 $A_1$ (point) & $A_2$ (line) & $A_3$ (conic) & $A_4$ (line) \\\hline
$(1:0:0)$ & $x = 0$ & $xy-y^2+xz = 0$ & $x-y+z = 0$ \\
\hline\hline
$B_{12}$ (point)& $B_{23}$ (line)& $B_{34}$ (line) & $B_{41}$ (point) \\\hline
$(1:1:0)$ & $y = 0$ & $z = 0$ & $(0:0:1)$ \\\hline
\end{tabular}
\end{table}
    \end{itemize}
\end{proposition}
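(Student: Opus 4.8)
The plan is to compute the birational action of $W$ on the geometric quotient $\P^2$ directly, exploiting the big-cell parametrization $\imath_e\colon\fn\xrightarrow{\sim}F_e$ and the explicit quotient map of Remark~\ref{rem:geomP2}. Identifying $\Bir(X_e)\simeq\Bir(\P^2)$ through $p$ as in Definition~\ref{def:bsigma2}, an element $\sigma\in W$ acts on $\P^2$ by transporting the class of a general orbit $[\exp(n)B]$, $n\in\fn$, to the class of $[\sigma\exp(n)B]$; since $W$ preserves the big cell birationally it suffices to treat the two Coxeter generators $r_1,r_2$ and to read off the induced self-maps of $\P^2$ in the natural coordinates $(x_2:x_1^{-1}y:x_1^{-2}z)$ of Remark~\ref{rem:geomP2}.

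Concretely, I would write a general point as $\exp(n(x_1,x_2,y,z))B$ with $n$ the strictly lower part of \eqref{eq:sp4}, multiply on the left by the representative of $r_i$ in \eqref{eq:r1,r2}, and perform the symplectic Gauss/Bruhat decomposition $r_i\exp(n)=\exp(n')\,b$ with $n'\in\fn$, $b\in B$; substituting the new coordinates $(x_1',x_2',y',z')$ into $(x_2':x_1'^{-1}y':x_1'^{-2}z')$ yields the image in $\P^2$. Because $r_1$ fixes $\alpha_1+\alpha_2$ and interchanges $-\alpha_2\leftrightarrow-(2\alpha_1+\alpha_2)$ while inverting $x_1$, the induced map is already projective-linear in the natural coordinates and a single coordinate change normalizes it to $(x-y+z:z:y)$. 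By contrast $r_2$ sends $\alpha_1\mapsto\alpha_1+\alpha_2$, i.e.\ it moves the coordinate $x_1$ that is inverted in the quotient map into another root space, so the powers $x_1^{-1},x_1^{-2}$ become genuine quadratic expressions and $r_2$ comes out as a degenerate quadratic transformation, normalized to $(y^2:xy:-xz)$ by the same change of coordinates. One then checks directly that these satisfy $r_1^2=\id$, $r_2^2=\id$, $(r_1r_2)^4=\id$, matching \eqref{eq:presentation D4}.

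For the boundary curves I would first locate the images of the two exceptional divisors of $X_e$ itself, using the quotient fan from the proof of Proposition~\ref{prop:combiquot} together with Remark~\ref{rem:geomP2}. By Section~\ref{ssec:combiquotBB} the divisor $O(\pi(w_2))$ contracted by $p'$ is the boundary curve $A_2$; as the ray $\pi(w_2)$ persists in the fan $\Sigma_p$ of $\P^2$, the morphism $p$ carries it to the toric line $\{x_2=0\}=\{x=0\}$, giving the entry $A_2\colon x=0$. The other exceptional divisor $O(\pi(w_1))$, contracted by $p$, is of type $B$; since $\pi(w_1)=(-1,-1)$ lies in the interior of the cone $\langle\pi(w_2),\pi(w_3)\rangle$ of $\Sigma_p$, the morphism $p$ sends it to the torus-fixed point $\{x=0\}\cap\{y=0\}=(0:0:1)$, the entry $B_{41}$. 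The remaining six entries I would obtain by transporting these two seeds via the $W_p$-action, invoking Proposition~\ref{prop:8bounddiv}: the eight boundary curves are a faithful copy of the eight subdivisions of the octagon $P$, and $W$ permutes them preserving the two types, with $g=r_2r_1$ acting as the order-four rotation cycling $A_1,A_2,A_3,A_4$ and $B_{12},B_{23},B_{34},B_{41}$. Thus each image is computed by applying the explicit maps above: $r_1(\{x=0\})=\{x-y+z=0\}$ gives $A_4$, $r_2(\{x=0\})=(1:0:0)$ gives $A_1$, $r_2(\{x-y+z=0\})$ lands on $\{xy-y^2+xz=0\}$ giving $A_3$, while $r_1(0:0:1)=(1:1:0)$ gives $B_{12}$, the line $\{y=0\}$ is contracted by $r_2$ onto $(0:0:1)$ giving $B_{23}$, and $r_1(\{y=0\})=\{z=0\}$ gives $B_{34}$. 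Each assignment is cross-checked against the reflection or rotation it induces on the octagon in Figure~\ref{fig:subdiv}, so the labelling matches.

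The main obstacle is the generator computation of the second paragraph. The symplectic constraint forces nontrivial relations among the entries of $\exp(n)$, so the decomposition is not a plain $LU$ factorization, and the $x_1$-twist in the quotient map produces denominators that must cancel to leave polynomial formulas. Equally delicate is exhibiting one linear change of coordinates on $\P^2$ that normalizes $r_1$ and $r_2$ \emph{simultaneously} to the stated shapes, and verifying that the base locus and the contracted line of the quadratic map $r_2$ are exactly the points and curves dictated by the boundary table. These compatibility checks, rather than any single algebraic step, are where the care lies.
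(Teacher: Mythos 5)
Your proposal follows essentially the same route as the paper's proof: the paper likewise computes $r_1,r_2$ on the big cell by writing out the Gauss/Bruhat factorization $r_i\exp(n)=\exp(n')b$ explicitly, pushes the result through the quotient map $(x_1,x_2,y,z)\mapsto(x_2:x_1^{-1}y:x_1^{-2}z)$ to obtain exactly the stated linear and quadratic formulas, and then seeds the boundary table with $A_2\mapsto\{x=0\}$ and $B_{41}\mapsto(0:0:1)$ (read off from the toric structure of $X_e$) before propagating the remaining six entries by applying $r_1$ and $r_2$. The only minor remark is that the delicate ``simultaneous normalization'' you worry about at the end does not arise: the natural quotient coordinates already produce both generators in the stated form with no further change of coordinates.
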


\begin{figure}[ht!!]
\centering
\resizebox{0.35\textwidth}{!}{%
\begin{tikzpicture}
     \node at (0,0) {\includegraphics[width=0.5\textwidth]{ 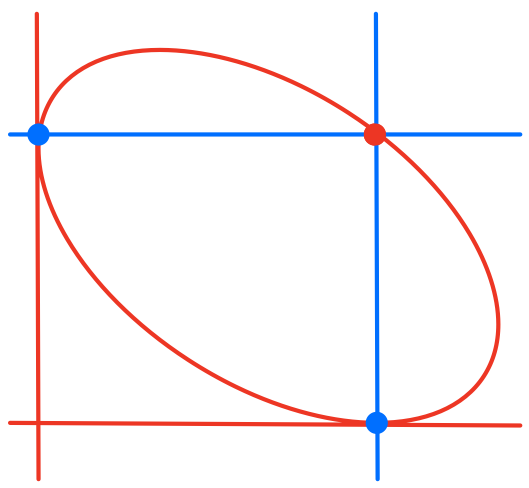}};
     \node at (1.7,-2.4)  {\color{blue}$B_{12}$};
     \node at (-0.6,1.7) {\color{blue}$B_{23}$};
     \node at (1.7,-0.3) {\color{blue}$B_{34}$};
     \node at (-3.1,1.7){\color{blue}$B_{41}$};
     \node at (1.7,1.7) {\color{red}$A_1$};
     \node at (-3,-0.3) {\color{red}$A_2$};
     \node at (-0.8,-1) {\color{red}$A_3$};
     \node at (-0.6,-2.4) {\color{red}$A_4$};
\end{tikzpicture}
}
\caption{The images in $\P^2$ of the boundary curves.}\label{fig:boundary}
\end{figure}
\begin{proof}
   The proof relies essentially in a direct computation of the maps $r_1$, $r_2$ as birational automorphisms of $\P^2$. 

In order to do so, we will describe first $r_i$, $i=1,2$, as a birational automorphisms of the open set $F_e=\exp(\fn)B\subset F$, which is isomorphic to $\fn$. We start by noting that an element $gB$ in $F_e$ can be written uniquely as the class modulo $B$ of (the homothety class of) a matrix:
\[g=\left(\begin{array}{rrrr}
1 & 0 & 0 & 0 \\
x_{1} & 1 & 0 & 0 \\
y & x_{2} & 1 & 0 \\
z & -x_{1} x_{2} + y & -x_{1} & 1
\end{array}\right), \quad x_1,x_2,y,z\in \C.\]
We will use $x_1,x_2,y,z$ as coordinates in $F_e\simeq \fn$, noting that these are different from the coordinates provided by the Cartan decomposition of $\fn$, but the action of $H$ on $F_e$ written in these coordinates is still the same: an element $h\in H$ sends $(x_1,x_2,y,z)$ to $(h^{\alpha_1}x_1,h^{\alpha_2}x_2,h^{\alpha_1+\alpha_2}y,h^{2\alpha_1+\alpha_2}z)$.
As already noted in Remark \ref{rem:geomP2}, the quotient map $F_e\dashrightarrow \P^2$ can be written as:
\[
(x_1,x_2,y,z)\in F_e\longmapsto (x_2:yx_1^{-1}:zx_1^{-2})\in\P^2.
\]
We use these as our homogeneous coordinates in $\P^2$, denoting them $(x:y:z)$. 

Now, for a general $gB\in F_e$, one can easily check that 
\[\begin{array}{l}
r_1g=\begin{pmatrix}
 1 & 0 & 0 & 0 \\
 {x_{1}^{-1}} & 1 & 0 & 0 \\
 {x_{1}^{-1}}z & x_{1}^{2} x_{2} - x_{1} y + z & 1 & 0 \\
  {x_{1}^{-1}}y & -x_{1} x_{2} + y & {-x_{1}^{-1}} & 1
 \end{pmatrix}
 \begin{pmatrix}
 {x_{1}^{-1}} & 1 & 0 & 0 \\
 0 & -x_{1} & 0 & 0 \\
 0 & 0 & {-x_{1}^{-1}} & 1 \\
 0 & 0 & 0 & x_{1}
 \end{pmatrix},\\[8pt]
r_2g=\begin{pmatrix}
 1 & 0 & 0 & 0 \\
 y & 1 & 0 & 0 \\
 -x_{1} & -{x_{2}^{-1}} & 1 & 0 \\
 z & -x_{1} + yx_{2}^{-1} & -y & 1
 \end{pmatrix}
 \begin{pmatrix}
 1 & 0 & 0 & 0 \\
 0 & x_{2}^{-1} & -1 & 0 \\
 0 & 0 & x_{2} & 0 \\
 0 & 0 & 0 & 1
 \end{pmatrix}.
\end{array}\]
In other words, in our chosen set of coordinates in $F_e$, the maps $r_1,r_2$ are given by
\[\begin{array}{l}r_1(x_1,x_2,y,z)=(x_{1}^{-1},x_{1}^{2} x_{2} - x_{1} y + z,{x_{1}^{-1}}z,{x_{1}^{-1}}y),\\[5pt]
r_2(x_1,x_2,y,z)=(y,-x_{2}^{-1},-x_{1},z).
\end{array}\]
Passing now to the quotient $\P^2$, the induced maps are then given by
\[\begin{array}{l}r_1(x:y:z)=(x - y + z:z:y),\\[5pt]
r_2(x:y:z)=(-x^{-1}:-y^{-1}:y^{-2}z)=(y^2:xy:-xz).
\end{array}\]

For the second part of the statement, note first that, by definition of our boundary curves, the images in $\P^2$ of $A_2$ and $B_{41}$ are the line $x=0$ and the point $(0:0:1)$, respectively. By applying $r_1$ to them, we see that $A_4$ is the line $x-y+z=0$ and $B_{12}$ the point $(1:1:0)$. 
Now applying $r_2=r_2^{-1}$ to $A_2$ and $B_{41}$, we get that $A_1$ is $(1:0:0)$ and $B_{23}$ is the line $y=0$. Finally, we may compute $A_3$ by applying $r_2$ to $A_4$, and $B_{34}$ as the image of $B_{23}$ by $r_1$.
\end{proof}

\begin{remark}\label{rem:r1r2}
    While the map $r_1\in W_p$ is a linear automorphism, the Cremona transformation $r_2\in W_p$ is the rational map induced by the linear system of conics of $\P^2$ passing through $A_1$ and tangent to the line $A_2$ at the point $B_{41}$.
\end{remark}

\begin{remark}\label{rem:Xr1}
We note that, by our description of the map $r_1$ above, it follows that the combinatorial quotient $X_{r_1}$ contracts to our base geometric quotient $\P^2$ and the map $p_{r_1}:X_{r_1}\to \P^2$ is a weighted blowup along the point $B_{12}$. Note also that the map $X_e\dashrightarrow X_{r_1}$ is not toric, since the weighted blowup at $B_{12}$ is performed with respect to the system of coordinates $((x-y+z)/y, z/y)$.  
\end{remark}

As an immediate consequence of Proposition \ref{prop:CremonaWeyl} we have the following result.
\begin{corollary}
    The Cremona representation $W_p\subset\Bir(\P^2)$ is isomorphic to the Weyl group $W\simeq D_4$. Moreover, $W_p$ acts effectively on the set of boundary divisors preserving their type. 
\end{corollary}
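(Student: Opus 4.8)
The plan is to reduce the statement to the combinatorics of the permutation action of $W$ on the set $\mathcal{B}$ of eight boundary divisors, exploiting the explicit description of $r_1,r_2$ and of the images of the boundary curves supplied by Proposition \ref{prop:CremonaWeyl}. Since $W$ acts on $F$ by automorphisms, it acts on the Chow quotient $X$ and hence permutes its boundary divisors; restricting to $X_e$ and pushing forward along $p:X_e\to\P^2$, the induced action is precisely the one generated by the birational involutions $r_1,r_2\in W_p$ on the images of $A_1,\dots,B_{41}$ listed in the table of Proposition \ref{prop:CremonaWeyl}. The whole corollary then follows once these two permutations are determined, since $b:W\twoheadrightarrow W_p$ factors the composite $W\to\mathrm{Sym}(\mathcal{B})$, and faithfulness of the latter forces $b$ to be injective and the $W_p$-action on $\mathcal{B}$ to be effective.

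Concretely, I would first compute the image of each of the eight boundary curves under $r_1$ and under $r_2$. For $r_1$, the \emph{linear} involution $(x:y:z)\mapsto(x-y+z:z:y)$, this is a direct substitution: one finds that $r_1$ fixes $A_1$ and $A_3$ and induces the transpositions $A_2\leftrightarrow A_4$, $B_{12}\leftrightarrow B_{41}$, $B_{23}\leftrightarrow B_{34}$. For $r_2$, the quadratic Cremona involution, the substitution must be handled with care at its two base points $A_1=(1:0:0)$ and $B_{41}=(0:0:1)$ (the latter infinitely near, reflecting the tangency of Remark \ref{rem:r1r2}): there the set-theoretic image of a point is indeterminate, so one instead uses $r_2=r_2^{-1}$ together with the contractions $A_2\mapsto A_1$ and $B_{23}\mapsto B_{41}$ to read off the blown-up images. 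The outcome is that $r_2$ fixes $B_{12}$ and $B_{34}$ and induces $A_1\leftrightarrow A_2$, $A_3\leftrightarrow A_4$, $B_{23}\leftrightarrow B_{41}$. In particular both generators preserve the partition of $\mathcal{B}$ into type $A$ and type $B$, so the same holds for all of $W_p$.

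With the two permutations in hand, the remaining steps are purely group-theoretic. Composing them gives $r_1r_2=(A_1\,A_4\,A_3\,A_2)(B_{12}\,B_{41}\,B_{34}\,B_{23})$, a product of two disjoint $4$-cycles, hence an element of order exactly $4$. Thus the subgroup of $\mathrm{Sym}(\mathcal{B})$ generated by the involutions $r_1,r_2$ satisfies the dihedral relations of (\ref{eq:presentation D4}) and contains an element of order $4$, so it is a copy of $D_4$ of order $8$. Therefore $W\to\mathrm{Sym}(\mathcal{B})$ is an isomorphism onto its image; since it factors through $b:W\to W_p$, the map $b$ is injective, giving $W_p\cong W\simeq D_4$, and the $W_p$-action on $\mathcal{B}$ is faithful, i.e.\ effective, and type-preserving.

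I expect the only delicate point to be the computation of $r_2$ on the boundary curves passing through, or contracted to, its base locus. Away from the base points $r_2$ is an honest morphism and the images are obtained by substitution (as already done in the proof of Proposition \ref{prop:CremonaWeyl} for $A_4\mapsto A_3$), but at $A_1$ and $B_{41}$ one must pass to strict transforms on $X$ rather than to set-theoretic images in $\P^2$; invoking the involutivity of $r_2$ and the already-computed contractions circumvents any explicit blow-up computation.
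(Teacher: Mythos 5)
Your proposal is correct and follows essentially the approach the paper intends: the corollary is stated there as an immediate consequence of Proposition \ref{prop:CremonaWeyl}, and your fill-in (computing the permutations of the eight boundary curves induced by $r_1$ and $r_2$, checking that $r_1r_2$ has order $4$ on them, and deducing injectivity of $b$ and type-preservation) is exactly the intended argument. Your careful treatment of $r_2$ at its base points via involutivity and the contractions $A_2\mapsto A_1$, $B_{23}\mapsto B_{41}$ matches how the paper itself handles these curves in the proof of Proposition \ref{prop:CremonaWeyl}.
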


The next step for the proof of the Main Theorem is to find a resolution for the rational maps of $W_p$. 

The variety we are going to define is a birational modification of $\P^2$ via a sequence of suitable (weighted) blowups, which we will prove to be eventually isomorphic to the Chow quotient $X$ of $F$. 

\begin{definition}\label{def:X'}
    We denote by $X'$ the normal $\Q$-Cartier surface obtained upon $\P^2$ (in the usual coordinates $(x:y:z)$) by the following procedure: \begin{itemize}[leftmargin=4mm]
        \item given the maps $p:X_e\to \P^2$, $p_{r_1}:X_{r_1}\to \P^2$, we consider the fiber product
        $Z:=X_e\times_{\P^2}X_{r_1}$ defined by
        
        \[
\xymatrix{Z\ar[r]\ar[d]\ar[rd]^s&X_e\ar[d]^p\\X_{r_1}\ar[r]_{p_{r_1}}&\P^2}
        \]
        \item we then define $X'$ as the smooth blowup $t:X'\to Z$ of $Z$ along $s^{-1}(A_1)$.
    \end{itemize}
    We denote by $q=s\circ t:X'\to \P^2$ 
    the defining contraction.
\end{definition}

\begin{remark}\label{remark:X'}
    In other words, $X'$ is obtained by blowing up $\P^2$ along $A_1$, and  $B_{41}$, $B_{12}$ with certain weights. The construction of $X'$ is not affected by changing the order of these blowups.
    Alternatively, we may have defined:
    \[
X'\simeq\Bl_\cI\P^2=\Proj_{\P^2}\left(\bigoplus_{d\geq 0}\cI^d\right),\quad \cI=(x z^{2}, (x-y) y z, (x-y+z) y^{2})\cO_{\P^2}.
    \]
     The ideal sheaf $\cI$ is obtained as the intersection of the three ideal sheaves 
     \[(y,z)\cO_{\P^2},\quad (xz,y^2)\cO_{\P^2},\quad ((x-y+z)y,z^2)\cO_{\P^2}.\] 
\end{remark}

\begin{lemma}\label{lemma:p'r'=rp'}
    There exist unique automorphisms $r'_i:X'\to X',i=1,2$ such that 
    \begin{equation}\label{eq:p'r'=rp'}
    q\circ r'_i=r_i\circ q.\end{equation}
    \begin{proof}
        Consider first $r_2:\P^2\dashrightarrow\P^2$ and observe that $r_2=\phi\circ\rho_2$ where \[\rho_2:\P^2\dashrightarrow\P^2,\quad (x:y:z)\longmapsto(y^2:xy:xz)\] is a toric birational map and $\phi:(x:y:z)\in\P^2\mapsto(x:y:-z)\in\P^2$. First, $\rho_2$ lifts to a toric birational map $X_e\dashrightarrow X_e$, resolving the indeterminacy at $B_{41}$, and then to a toric automorphism $\rho''_2:\Bl_{A_1}(X_e)\to\Bl_{A_1}(X_e)$ that resolves the indeterminacy at (the inverse image of) $A_1$. Since $\phi$ fixes the points $A_1,B_{41}$ and the pencil of conics $\langle xz,y^2\rangle\subset|\cO_{\P^2}(2)|$, it also lifts to an automorphism $\phi'':\Bl_{A_1}(X_e)\to\Bl_{A_1}(X_e)$. Now the composition $r_2'':=\phi''\circ\rho''_2$ satisfies 
        \begin{equation}\label{eq:r''2}
            (p\circ t')\circ r''_2=r_2\circ(p\circ t')
        \end{equation} where $t':\Bl_{A_1}(X_e)\to X_e$ is the (smooth) blowup. Finally, note that $r_2$ is a local isomorphism in the fixed point $B_{12}$, therefore $r''_2$ extends to an automorphism $r'_2:X'\to X'$ via the natural 
        projection $f:X'\simeq\Bl_{A_1}(X_e)\times_{\P^2} Y\to\Bl_{A_1}(X_e)$. By Remark \ref{remark:X'} we have $q=p\circ t'\circ f$, thus (\ref{eq:r''2}) implies (\ref{eq:p'r'=rp'}) for $i=2$. 

        Now consider $r_1:\P^2\to\P^2$ and observe that, by Remark \ref{rem:Xr1}, it extends to an automorphism: $r_1'':Z\to Z$. Moreover, since $r_1$ 
        fixes the point $A_1$, then $r_1''$ extends to an automorphism $r_1':X'\to X'$, which satisfies (\ref{eq:p'r'=rp'}) for $i=1$.
    \end{proof}
\end{lemma}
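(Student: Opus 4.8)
The plan is to prove Lemma~\ref{lemma:p'r'=rp'} by lifting each Cremona transformation $r_i$ from $\P^2$ up to $X'$, exploiting the explicit formulas of Proposition~\ref{prop:CremonaWeyl} together with the factorization of $X'$ as a sequence of (weighted) blowups described in Remark~\ref{remark:X'}. The key structural fact to use is that $X'$ is built over $\P^2$ by blowing up precisely the three distinguished loci $A_1=(1:0:0)$, $B_{41}=(0:0:1)$, and $B_{12}=(1:1:0)$ (with suitable weights and tangency conditions). Since $r_i$ is birational and the contraction $q:X'\to\P^2$ is fixed, equation~\eqref{eq:p'r'=rp'} determines $r_i'$ uniquely wherever it exists; the whole content is therefore existence, i.e. showing that the birational self-map of $X'$ induced by $r_i$ is in fact an everywhere-defined automorphism. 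Uniqueness is then immediate because $q$ is birational and $X'$ is normal.

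First I would treat $r_1$, which is the easier case. Because $r_1$ is a \emph{linear} automorphism of $\P^2$ (Remark~\ref{rem:r1r2}) it permutes the three blown-up loci: by Proposition~\ref{prop:CremonaWeyl} it sends $A_2\mapsto A_4$, $B_{41}\mapsto B_{41}$ (indeed $r_1$ fixes $(0:0:1)$), and $B_{12}\mapsto B_{12}$, while fixing $A_1$. More to the point, Remark~\ref{rem:Xr1} already records that $r_1$ lifts to an automorphism $r_1''$ of $Z=X_e\times_{\P^2}X_{r_1}$, since $X_{r_1}$ is exactly the weighted blowup of $\P^2$ at $B_{12}$ in the coordinates $((x-y+z)/y,\,z/y)$ adapted to $r_1$. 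It then remains only to observe that, $r_1$ fixing the point $A_1$, the automorphism $r_1''$ commutes with the final smooth blowup $t:X'\to Z$ along $s^{-1}(A_1)$ and hence extends to $r_1':X'\to X'$ with $q\circ r_1'=r_1\circ q$.

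For $r_2$ the argument is more delicate, and this is where I expect the main obstacle to lie, because $r_2$ is a genuine Cremona (quadratic) transformation with base scheme supported at $A_1$, tangent to the line $A_2$ at $B_{41}$ (Remark~\ref{rem:r1r2}). The strategy is to factor $r_2=\phi\circ\rho_2$, separating the toric part $\rho_2:(x:y:z)\mapsto(y^2:xy:xz)$ from the sign involution $\phi:(x:y:z)\mapsto(x:y:-z)$. The toric map $\rho_2$ lifts functorially through the toric modifications: first to a toric birational self-map of $X_e$ resolving the indeterminacy at $B_{41}$, and then to a genuine toric automorphism $\rho_2''$ of $\Bl_{A_1}(X_e)$ resolving the indeterminacy at the inverse image of $A_1$. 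Since $\phi$ fixes $A_1$, $B_{41}$, and the pencil $\langle xz,y^2\rangle$ of conics, it too lifts to an automorphism $\phi''$ of $\Bl_{A_1}(X_e)$, so that $r_2''=\phi''\circ\rho_2''$ is an automorphism satisfying the analogue of~\eqref{eq:p'r'=rp'} over $\Bl_{A_1}(X_e)$.

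The final—and genuinely subtle—point is to promote $r_2''$ from $\Bl_{A_1}(X_e)$ to all of $X'$, i.e. to account for the extra blowup over $B_{12}$ built into $X'$ but not into $\Bl_{A_1}(X_e)$. Here I would use that $r_2$ is a \emph{local isomorphism} at the fixed point $B_{12}$: since $r_2$ neither contracts nor has base points near $B_{12}$, the lifted map does not interfere with the exceptional locus over $B_{12}$, and so $r_2''$ extends across the natural projection $f:X'\simeq\Bl_{A_1}(X_e)\times_{\P^2}Y\to\Bl_{A_1}(X_e)$ to an automorphism $r_2':X'\to X'$. Combining with the identification $q=p\circ t'\circ f$ from Remark~\ref{remark:X'} then upgrades the intermediate relation~\eqref{eq:r''2} to the desired~\eqref{eq:p'r'=rp'} for $i=2$. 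The care needed is precisely in checking that the various birational lifts are everywhere-defined morphisms (not merely birational maps) at each stage, which is why controlling the base loci of $r_2$ via its explicit resolution is the crux of the argument.
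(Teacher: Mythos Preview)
Your proposal follows essentially the same argument as the paper: factor $r_2=\phi\circ\rho_2$ with $\rho_2$ toric, lift through $X_e$ and $\Bl_{A_1}(X_e)$, then extend across the blowup over $B_{12}$ using that $r_2$ is a local isomorphism there; for $r_1$, use Remark~\ref{rem:Xr1} to lift to $Z$ and then extend over $A_1$. There is one computational slip: $r_1(x:y:z)=(x-y+z:z:y)$ does \emph{not} fix $B_{41}=(0:0:1)$ and $B_{12}=(1:1:0)$ but rather swaps them (and correspondingly swaps the tangent data $A_2\leftrightarrow A_4$), which is precisely what makes $r_1$ interchange the two factors $X_e$ and $X_{r_1}$ of the fiber product $Z$; this does not affect the structure of the argument, only your parenthetical description of the permutation.
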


\begin{corollary}\label{cor:autgp}
    There exists a unique group morphism \[b':w\in W\longmapsto b'_\sigma\in\Aut(X')\quad\textrm{such that}\quad b'_{r_i}=r'_i\quad\textrm{for}\,\,i=1,2.\] In particular, the Weyl group $W$ acts faithfully on $X'$.
\end{corollary}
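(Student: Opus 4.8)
The plan is to exploit the Coxeter presentation $W\simeq D_4=\langle r_1,r_2\mid r_1^2,r_2^2,(r_1r_2)^4\rangle$ of (\ref{eq:presentation D4}). By the universal property of a group presentation, producing a homomorphism $b':W\to\Aut(X')$ with $b'_{r_i}=r'_i$ amounts to checking that the automorphisms $r'_1,r'_2$ supplied by Lemma \ref{lemma:p'r'=rp'} satisfy the defining relations of $D_4$, i.e.
\[
(r'_1)^2=(r'_2)^2=(r'_1r'_2)^4=\id_{X'}.
\]
Uniqueness of $b'$ is then automatic, since $r_1,r_2$ generate $W$.

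The key technical ingredient, and the step I expect to carry the weight of the argument, is a \emph{uniqueness of lifts} property for the birational contraction $q:X'\to\P^2$. I would first establish that, for any birational self-map $w$ of $\P^2$, there is at most one $\gamma\in\Aut(X')$ with $q\circ\gamma=w\circ q$: since $q$ is birational and $X'$ is integral, any two such $\gamma$ agree on the dense open locus where $q$ restricts to an isomorphism onto its image and $w$ is defined, hence they coincide on all of $X'$ by separatedness. This is precisely what upgrades the merely birational identities (\ref{eq:p'r'=rp'}) into genuine equalities of automorphisms.

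Granting this, I would verify the relations formally. Composing instances of (\ref{eq:p'r'=rp'}) shows that every word in $r'_1,r'_2$ is a lift through $q$ of the corresponding word in $r_1,r_2\in W_p$:
\[
q\circ(r'_{i_1}\cdots r'_{i_m})=(r_{i_1}\cdots r_{i_m})\circ q.
\]
Because $r_1^2=r_2^2=(r_1r_2)^4=\id_{\P^2}$ hold in the Cremona representation $W_p\simeq W$ (Proposition \ref{prop:CremonaWeyl}), each of $(r'_1)^2$, $(r'_2)^2$, $(r'_1r'_2)^4$ lifts the identity of $\P^2$ through $q$; since $\id_{X'}$ is itself such a lift, the uniqueness property forces all three to equal $\id_{X'}$. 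This yields the three relations, and hence the existence and uniqueness of $b'$.

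Finally, for faithfulness I would use that the homomorphism property together with (\ref{eq:p'r'=rp'}) gives $q\circ b'_\sigma=\bar\sigma\circ q$ for every $\sigma\in W$, where $\sigma\mapsto\bar\sigma$ denotes the Cremona representation $W\to W_p$. If $b'_\sigma=\id_{X'}$, then $\bar\sigma\circ q=q$, so $\bar\sigma$ restricts to the identity on the dense image of $q$ and is therefore the identity in $W_p$; since the Cremona representation is faithful (Proposition \ref{prop:CremonaWeyl}), this forces $\sigma=e$. Thus $\ker b'$ is trivial and the $W$-action on $X'$ is faithful. The only genuinely nontrivial point is the uniqueness-of-lifts step; the remainder is formal bookkeeping with the presentation of $D_4$.
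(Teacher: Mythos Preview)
Your argument is correct and is exactly the natural way to fill in the details that the paper leaves implicit: the corollary is stated without proof, and the intended justification is precisely the presentation of $W\simeq D_4$, the compatibility $q\circ r'_i=r_i\circ q$ from Lemma~\ref{lemma:p'r'=rp'}, and the faithfulness of the Cremona representation. Your uniqueness-of-lifts observation is the right mechanism to turn the birational identities into equalities of automorphisms, and the rest is indeed formal; the only minor quibble is that the faithfulness of $W\to W_p$ is stated in the (unlabeled) corollary following Proposition~\ref{prop:CremonaWeyl} rather than in the proposition itself.
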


\begin{theorem}
    The normalized Chow quotient $X$ is isomorphic to $X'$. In particular, $X$ is a $\Q$-Cartier rational surface with Picard number $4$ and only two singularities of type $\DA_1$.
\end{theorem}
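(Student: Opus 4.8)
The plan is to prove that the normalized Chow quotient $X$ is isomorphic to $X'$ by exploiting the universal property of $X$ as an inverse limit of the combinatorial quotients $X_\sigma$ (Remark \ref{rem:invlim}), together with the $W$-equivariant resolution $X'$ constructed in Definition \ref{def:X'} and Lemma \ref{lemma:p'r'=rp'}. The strategy is to produce mutually inverse birational morphisms between $X$ and $X'$, and then to read off the geometric consequences (Picard number and the two $\DA_1$ singularities) from the explicit description of $X'$.

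Let me sketch the main steps. First, I would construct a birational morphism $X'\to X$. By Corollary \ref{cor:autgp}, the Weyl group $W$ acts on $X'$ via $\sigma\mapsto b'_\sigma$, and by Lemma \ref{lemma:p'r'=rp'} the contraction $q:X'\to\P^2$ intertwines this action with $W_p\subset\Bir(\P^2)$. Since $X_e$ is one of the two geometric GIT quotients built into the fiber product $Z$, the composition $X'\to Z\to X_e$ gives a birational morphism $\varphi_e:X'\to X_e$; composing with $b'_\sigma$ (and using $\jmath_\sigma=\pi_\sigma\circ\pi_e^{-1}$) yields birational morphisms $\varphi_\sigma:X'\to X_\sigma$ for every $\sigma\in W$, which by construction satisfy $\varphi_\sigma=\jmath_\sigma\circ\varphi_e$. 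Applying the universal property of $X$ from Remark \ref{rem:invlim} to the normal irreducible variety $Y=X'$, I obtain a unique birational morphism $\varphi:X'\to X$ with $\varphi_\sigma=\pi_\sigma\circ\varphi$. The central verification here is that the $W$-equivariance of $X'$ really does force all the $\varphi_\sigma$ to be \emph{morphisms} (not merely rational maps) and that the compatibility $\varphi_\sigma=\jmath_\sigma\circ\varphi_e$ holds on the nose; this is where the careful construction of $X'$ as resolving exactly the indeterminacies of $r_1,r_2$ pays off.

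Second, I would show $\varphi$ is an isomorphism. Since $\varphi$ is a birational morphism between normal surfaces, by Zariski's main theorem it suffices to check that $\varphi$ contracts no curve, equivalently that $X'$ and $X$ have the same Picard number, or directly that the fibers of $\varphi$ are finite. I would argue as in Proposition \ref{prp:invlim}: if $\varphi$ contracted a curve $C\subset X'$, then every $\pi_\sigma\circ\varphi=\varphi_\sigma$ would contract $C$, meaning the corresponding $H$-invariant cycle is the same in all charts $F_\sigma$, contradicting $F=\bigcup_\sigma F_\sigma$. Alternatively, and perhaps more cleanly, I would directly count: $X'$ is obtained from $\P^2$ by three weighted blowups (at $A_1$, $B_{41}$, $B_{12}$ per Remark \ref{remark:X'}), so $\rho(X')=1+3=4$; matching this against the eight boundary divisors of $X$ realized on a $W$-orbit, and checking the blowup centers account for all of them up to the $W$-action, pins down that $\varphi$ is a finite birational morphism, hence an isomorphism by normality.

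Finally, the geometric conclusions follow from the explicit model $X'$. The Picard number $\rho(X)=\rho(X')=4$ is immediate from the three-fold weighted blowup description in Remark \ref{remark:X'}. The singularities of $X'$ come from the weighted (non-smooth) blowups: the weighted blowup of $\P^2$ at $B_{41}=(0:0:1)$ contributes, via the $\P(1,1,2)\simeq Q^2$ structure noted in Remark \ref{rem:geomP2}, exactly one $\DA_1$ singularity, and the weighted blowup at $B_{12}$ described in Remark \ref{rem:Xr1} contributes a second; the blowup along $A_1$ is smooth. I expect the main obstacle to be the second step, namely rigorously ruling out any contracted curve and thereby confirming $\varphi$ is finite: the universal property gives the morphism $\varphi:X'\to X$ formally, but promoting it to an isomorphism requires controlling the exceptional geometry of the (weighted) blowups against the boundary structure of $X$, and in particular verifying that the local analytic types of the two singular points of $X'$ are genuinely $\DA_1$ and are not smoothed or worsened in passing to the Chow quotient.
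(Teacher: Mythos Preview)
Your approach is essentially the paper's: construct $\varphi_\sigma:X'\to X_\sigma$ using the $W$-action on $X'$, invoke the universal property of Remark \ref{rem:invlim} to get $\varphi:X'\to X$, and then show $\varphi$ is finite.

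The one genuine gap is in your finiteness argument. You propose to argue ``as in Proposition \ref{prp:invlim}'', concluding that if every $\varphi_\sigma$ contracts $C\subset X'$ then ``the corresponding $H$-invariant cycle is the same in all charts $F_\sigma$''. But that argument in Proposition \ref{prp:invlim} works because points of $X$ \emph{are} $H$-invariant cycles in $F$; points of $X'$ carry no such interpretation a priori, so the phrase is meaningless for $C\subset X'$. Concretely: if $\varphi$ contracts $C$ to a point $P\in X$, then $\varphi_\sigma(C)=\pi_\sigma(P)$ is a single point for each $\sigma$, which is no contradiction at all via cycle reasoning. Your alternative Picard-number count is circular, since $\rho(X)=4$ is part of what you are proving.

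The paper's fix is purely internal to $X'$: if $\varphi$ contracts $C$, then $C$ lies in the exceptional locus of every $\varphi_\sigma=q_\sigma\circ\varphi_e\circ b'_{\sigma^{-1}}$. Since $b'_{\sigma^{-1}}$ and $q_\sigma$ are isomorphisms, this exceptional locus is $b'_\sigma(\Exc(\varphi_e))$, i.e.\ the $W$-translate of the three exceptional divisors of $X'\to\P^2$. One then checks directly from the explicit construction (Definition \ref{def:X'}, Remark \ref{remark:X'}) that these $W$-translates have empty common intersection. This is the missing step you should supply, and it replaces both of your proposed arguments.
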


    \begin{proof}
        Denote by $\pi'_{e}:X'\to X_e$ the composition $X'\to Z\to X_e$ of the contractions in Definition \ref{def:X'}. For every $\sigma\in W$ we define the birational morphism: \[\pi'_\sigma:=q_\sigma\circ \pi'_e\circ b'_{\sigma^{-1}}:X'\to X_\sigma.\]
    On the other hand, by Lemma \ref{lemma:p'r'=rp'}, it holds that \[\pi'_e\circ b'_\sigma=b_\sigma\circ\pi'_e,\quad\textrm{for all }\sigma\in W\] as birational maps. Summing up, this implies that \[\pi'_\sigma\circ{\pi'_e}^{-1}=(q_\sigma\circ\pi'_e\circ b'_{\sigma^{-1}})\circ{\pi'_e}^{-1}=q_\sigma\circ b_{\sigma^{-1}}=\jmath_\sigma.\] By Remark \ref{rem:invlim}, we have a birational morphism $\psi:X'\to X$. Since $X$ and $X'$ are normal, in order to conclude that $\psi$ is an isomorphism it suffices to show that it is finite. Suppose by contradiction that $\psi$ contracts an irreducible curve $C\subset X'$. Then every $\pi'_\sigma$ contracts $C$ but this is impossible since the exceptional loci of all the $\pi'_\sigma$ have empty intersection.
    \end{proof}


\section{Geometry of the Chow quotient}\label{sec:constructX}


We will consider the numbering the edges of the moment octagon presented in Figure \ref{fig:numbering}.
\begin{figure}[!ht]
\resizebox{0.3\textwidth}{!}{%
\begin{tikzpicture}
     \node[rotate = 180] at (0,0) {\includegraphics[width=0.46\textwidth]{ 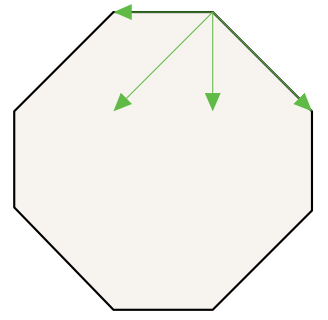}};
     \node at (-1.2,-2.8) {$eB$};
     %
     \node at (0,3) {$3$};
     \node at (0,-3) {$1$};
     \node at (-3,0) {$2$};
     \node at (3,0) {$4$};
     \node at (2,2) {$34$};
     \node at (-2,-2) {$12$};
     \node at (-2,2) {$23$};
     \node at (2,-2) {$41$};
\end{tikzpicture}
}
\caption{Numbering the edges of the octagon.\label{fig:numbering}}
\end{figure}

Taking into account the action of $W$ on the octagon $P$, this numbering is consistent with the representation of $W$ as the group of permutations of the set $\{1,2,3,4\}$, which identifies $r_1, r_2$ with the permutations $(2,4)$, $(1,2)(3,4)$, respectively. This numbering is consistent with the names given to the subdivisions of $P$ in Figure \ref{fig:subdiv}. We also recall the images of the eight boundary divisors via the contraction $q:X\simeq X'\to \P^2$, depicted in Figure \ref{fig:boundary}. 
\subsection{Intersection theory on $X$}\label{ssec:inttheory}

\begin{proposition}\label{prop:int}
With the above notation, the intersections of the boundary divisors in $X$ are the ones presented in Table \ref{tab:int}.

\begin{table}[ht!!]
\begin{tabular}{|c|c|c|c|c|c|c|c|c|}
\hline
&$A_1$&$A_2$&$A_3$&$A_4$&$B_{12}$&$B_{23}$&$B_{34}$&$B_{41}$\\
\hline
$A_1$&$-1$&$0$&$1$&$0$&$0$&$1$&$1$&$0$\\\hline
$A_2$&$0$&$-1$&$0$&$1$&$0$&$0$&$1$&$1$\\\hline
$A_3$&$1$&$0$&$-1$&$0$&$1$&$0$&$0$&$1$\\\hline
$A_4$&$0$&$1$&$0$&$-1$&$1$&$1$&$0$&$0$\\\hline
$B_{12}$&$0$&$0$&$1$&$1$&$-1/2$&$0$&$1/2$&$0$\\\hline
$B_{23}$&$1$&$0$&$0$&$1$&$0$&$-1/2$&$0$&$1/2$\\\hline
$B_{34}$&$1$&$1$&$0$&$0$&$1/2$&$0$&$-1/2$&$0$\\\hline
$B_{41}$&$0$&$1$&$1$&$0$&$0$&$1/2$&$0$&$-1/2$\\\hline
\end{tabular}
\vspace{0.5cm}\caption{Intersection numbers of boundary curves}\label{tab:int}
\vspace{-0.8cm}
\end{table}

    \begin{proof}
        Note first that if two boundary curves have nonzero intersection number then the intersection of the supports of their images in $\P^2$ must be nonempty. This already gives us all the zero values contained in the table. In order to conclude, by using the action of the group $W$ on $X$, it is enough to compute some particular intersection numbers:
        \begin{itemize}[leftmargin=4mm]
            \item $A_1^2=-1$ because $A_1$ is the exceptional divisor of a smooth blowup;
            \item $A_2\cdot A_4=1$ and $B_{23}\cdot A_4=1$ because, in both cases, these are intersection numbers of the strict transforms of two lines in $\P^2$, after blowing up points outside their intersection;
            \item the numbers $B_{41}^2=-1/2$ and $B_{41}\cdot B_{23}=1/2$ can be computed easily in $X_e$, by means of toric methods, noting that in Figure \ref{fig:quotfan}, the curves $B_{41}$ and $B_{23}$ corresponds to the rays generated by $(-1,-1)$ and $(1,-1)$, respectively.\qedhere 
        \end{itemize}
    \end{proof}
    
\end{proposition}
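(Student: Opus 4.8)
The plan is to exploit the faithful action of $W\simeq D_4$ on $X$ (Corollary \ref{cor:autgp}) to collapse the symmetric $8\times 8$ table to a handful of orbit representatives, and then to compute those few numbers by combining the birational contraction $q\colon X\simeq X'\to\P^2$ with the toric model of $X_e$. Under the permutation action recorded above ($r_1=(2,4)$, $r_2=(1,2)(3,4)$), the four curves of type $A$ form a single $W$-orbit and the four of type $B$ form another; viewing $A_1,\dots,A_4$ as the vertices and $B_{12},B_{23},B_{34},B_{41}$ as the edges of a square, the dihedral symmetry distinguishes only an \emph{adjacent} and an \emph{opposite} relation within each type, together with a single ``vertex lies on edge'' incidence between the two types. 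Consequently it suffices to determine five numbers, namely the self-intersections $A_1^2$ and $B_{41}^2$, one opposite $A$-$A$ value ($A_2\cdot A_4$), one opposite $B$-$B$ value ($B_{23}\cdot B_{41}$), and one non-incident $A$-$B$ value ($A_4\cdot B_{23}$), and to check that all adjacent/incident pairs vanish; every remaining entry of Table \ref{tab:int} is then obtained by transporting these under a suitable element of $W$.

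First I would dispose of the vanishing entries. A strict transform of a plane curve can meet another boundary curve in $X$ only over a point where their images in $\P^2$ meet, so whenever the images computed in Proposition \ref{prop:CremonaWeyl} are disjoint, the intersection number is zero; reading off Figure \ref{fig:boundary} this accounts for the bulk of the zeros. The delicate zeros are those in which the images \emph{do} meet, but only at one of the blown-up centres $A_1=(1:0:0)$, $B_{12}=(1:1:0)$, $B_{41}=(0:0:1)$: for instance $A_2\cdot A_3=0$ even though in the chart $z=1$ the conic $A_3$ has local equation $x=y^{2}+O(y^{3})$ and is therefore tangent to the line $A_2=\{x=0\}$ at $B_{41}$. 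Here I would argue that the blowups at $B_{12}$ and $B_{41}$ are \emph{weighted} -- they realise the $\P(1,1,2)$-contraction $p'$ and the weighted blowup $p_{r_1}$ of Remarks \ref{rem:geomP2} and \ref{rem:Xr1} -- so that in the weighted local chart the line and the tangent conic acquire distinct weighted leading terms; their strict transforms then become disjoint, forcing the intersection to vanish. This refinement is exactly what the naive ``images meet'' heuristic misses.

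For the nonzero representatives I would proceed as follows. The value $A_1^2=-1$ is immediate, since $A_1$ is the exceptional divisor of the smooth blowup $t$ of Definition \ref{def:X'}. The transverse numbers $A_2\cdot A_4=1$ and $A_4\cdot B_{23}=1$ are intersections of strict transforms of two plane lines meeting at a single point of $\P^2$ that is \emph{not} one of the three blown-up centres; since blowing up points away from a transverse intersection preserves that intersection number, both equal $1$. The fractional values $B_{41}^2=-1/2$ and $B_{23}\cdot B_{41}=1/2$ are cleanest to compute torically inside $X_e$: these two curves already appear there and are untouched by the later smooth blowup at $A_1$, and in the quotient fan of Figure \ref{fig:quotfan} they correspond to the rays $(-1,-1)$ and $(1,-1)$. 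The cone they span has lattice index $\bigl|\det\left(\begin{smallmatrix}-1&1\\-1&-1\end{smallmatrix}\right)\bigr|=2$, giving $B_{23}\cdot B_{41}=1/2$, while the toric self-intersection relation with the two neighbouring rays of $(-1,-1)$ yields $B_{41}^2=-1/2$.

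The main obstacle is the bookkeeping at the weighted centres $B_{12}$ and $B_{41}$: both the hidden zeros (such as $A_2\cdot A_3$, where the images meet only tangentially) and the half-integer values $\pm 1/2$ originate from the non-smoothness of these blowups, where the usual transversality and smooth-blowup formulas do not apply verbatim. I expect the most reliable route to be to treat everything supported over $B_{12}$ and $B_{41}$ entirely within the toric model $X_e$ of Figure \ref{fig:quotfan}, reserving the classical $\P^2$ blow-up computation for the genuine strict transforms $A_2,A_3,A_4,B_{23},B_{34}$, and to verify by an explicit weighted-chart calculation at $B_{41}$ that the tangent conic $A_3$ is truly separated from the line $A_2$. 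Once these five numbers and the three vanishing patterns are established, the full Table \ref{tab:int} follows by applying the $W$-action.
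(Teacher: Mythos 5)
Your proposal is correct and follows essentially the same route as the paper: reduce to one representative per $W$-orbit of pairs, get the vanishing entries from disjointness of the images in $\P^2$, read off $A_1^2=-1$ from the smooth blowup, compute the two transverse values as intersections of strict transforms of lines meeting away from the centres, and obtain the half-integer values torically from the rays $(-1,-1)$ and $(1,-1)$ of the quotient fan of $X_e$. Your extra discussion of the ``hidden zeros'' (pairs like $A_2$, $A_3$ whose images meet tangentially at a weighted centre) is a sound consistency check but is not logically needed, since within your own $W$-reduction each vanishing orbit admits a representative with genuinely disjoint images (e.g.\ $A_1\cdot A_2$, $B_{12}\cdot B_{23}$, $A_1\cdot B_{12}$); this is also the point where the paper's phrasing is slightly terser than the argument actually requires.
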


We know, from our constructive description of $X$, that its divisor class group is generated by the classes of our boundary divisors and, therefore, their numerical classes generate $\NU(X)=\Nu(X)$.

A straightforward linear algebra computation provides a basis for the space of numerical relations among the boundary divisors.

\begin{corollary}\label{cor:rels}
The subspace of numerical equivalence relations in the vector space $\R(A_1,\dots, B_{41})$ is generated by    
\[
\begin{array}{l}
A_1-A_2+A_3-A_4=B_{12}-B_{23}+B_{34}-B_{41}=0\\[3pt]
A_1-A_4-B_{12}+B_{34}=A_1-A_2+B_{23}-B_{41}=0.\qedhere
\end{array}
\]
\end{corollary}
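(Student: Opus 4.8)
The plan is to realize the space of numerical relations as the kernel of the evaluation map $\phi\colon \R(A_1,\dots,B_{41})\to\NU(X)$ sending each generator to the numerical class of the corresponding boundary divisor, and then to exhibit the four displayed combinations as a basis of that kernel.

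First I would pin down the dimension. By the remark preceding the corollary, the eight boundary divisors generate $\NU(X)$, so $\phi$ is surjective; since $X$ has Picard number $4$ (established in Section \ref{sec:mainproof}), the target $\NU(X)$ is $4$-dimensional, and hence $\ker\phi$ -- which is by definition the space of numerical relations -- has dimension $8-4=4$ by rank--nullity.

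Next I would translate membership in $\ker\phi$ into an explicit linear condition. A combination $\sum_i c_i D_i$ is numerically trivial exactly when it pairs to zero against every class of $\NU(X)$; as the $D_j$ span $\NU(X)$, this holds if and only if $\big(\sum_i c_i D_i\big)\cdot D_j=0$ for every $j$, i.e. if and only if $Mc=0$, where $c=(c_i)$ is the coefficient vector and $M$ is the symmetric intersection matrix recorded in Table \ref{tab:int}. Thus the relation space is precisely $\ker M$.

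Finally I would verify, by direct substitution into the rows of $M$, that each of the four displayed combinations satisfies $Mc=0$, and then check their linear independence -- for instance, their projections onto the four coordinates $A_3,B_{12},B_{23},B_{41}$ already form an invertible matrix. Four linearly independent vectors in the $4$-dimensional space $\ker\phi=\ker M$ constitute a basis, which is exactly the assertion. No genuine obstacle arises beyond this bookkeeping: the single conceptual point is the reduction of numerical triviality to vanishing against the spanning divisors $D_j$, after which the argument is the routine linear-algebra computation announced in the text.
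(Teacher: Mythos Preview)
Your proposal is correct and is precisely the ``straightforward linear algebra computation'' the paper alludes to: the paper offers no further argument beyond that phrase, and your reduction to $\ker M$ via the spanning property of the boundary divisors, followed by the dimension count and direct verification, is exactly the intended route.
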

In particular, we may easily compute a cokernel map for the inclusion of the space of relations. One possible choice provides the following description.
\begin{corollary}\label{cor:class}
With the above notation, there is an isomorphism $\NU(X)\simeq \R^4$ that sends the classes of the boundary divisors $A_1,\dots, A_4,B_{12},\dots B_{41}$ respectively to the columns of the matrix
\[\begin{pmatrix}
    1&1&1&1&0&0&0&0\\
    0&0&0&0&1&1&1&1\\
    1&0&-1&0&0&-1&-1&0\\
    0&-1&0&1&-1&-1&0&0
\end{pmatrix}.
\]
\end{corollary}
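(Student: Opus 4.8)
The plan is to reduce the statement to elementary linear algebra, using two facts already established: that the eight boundary classes generate $\NU(X)$, and that their numerical relations are precisely those listed in Corollary \ref{cor:rels}. First I would consider the surjective linear map $\kappa\colon\R^8\to\NU(X)$ sending the standard basis vector $\epsilon_i$ to the $i$-th boundary class, in the fixed order $A_1,\dots,A_4,B_{12},\dots,B_{41}$. By the discussion preceding Corollary \ref{cor:rels} this map is onto, and by Corollary \ref{cor:rels} its kernel is the four-dimensional subspace $R\subset\R^8$ spanned by the relation vectors
\[
r_1=(1,-1,1,-1,0,0,0,0),\quad r_2=(0,0,0,0,1,-1,1,-1),
\]
\[
r_3=(1,0,0,-1,-1,0,1,0),\quad r_4=(1,-1,0,0,0,1,0,-1).
\]
Thus $\kappa$ induces an isomorphism $\NU(X)\simeq\R^8/R$ onto a four-dimensional space, in accordance with the Picard number $4$.

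To realize this quotient concretely as $\R^4$ with the prescribed images, it suffices to exhibit any linear surjection $M\colon\R^8\to\R^4$ whose kernel equals $R$: such an $M$ factors as $M=\overline{M}\circ\kappa$ for a unique isomorphism $\overline{M}\colon\NU(X)\xrightarrow{\ \sim\ }\R^4$, and by construction $\overline{M}$ sends the $i$-th boundary class $\kappa(\epsilon_i)$ to $M(\epsilon_i)$, namely the $i$-th column of the matrix of $M$. I would then take $M$ to be the map given by the displayed $4\times 8$ matrix and check the two required properties directly. On the one hand, a short computation gives $Mr_j=0$ for $j=1,\dots,4$, so that $R\subseteq\ker M$. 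On the other hand, the four rows of $M$ are linearly independent: inspecting the first four columns, a vanishing linear combination of the rows forces each coefficient to be zero, whence $\rank M=4$ and so $\dim\ker M=4$. Since $R$ is four-dimensional and contained in the four-dimensional space $\ker M$, we conclude $R=\ker M$, as required.

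The argument presents no real obstacle; its only content is the verification that the chosen matrix is a valid cokernel map, that is, that the four relations lie in its kernel and that it has full rank. I would stress that the cokernel map, and hence the identification $\NU(X)\simeq\R^4$, is far from canonical: any other surjection with kernel $R$ would serve equally well and would merely amount to a change of coordinates on $\R^4$. The displayed matrix is one convenient choice, arranged so that the resulting coordinates interact transparently with the $W$-action on the boundary divisors and with the intersection data recorded in Table \ref{tab:int}.
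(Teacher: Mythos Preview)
Your argument is exactly the paper's: the corollary is stated as the computation of a cokernel map for the inclusion of the relation space from Corollary~\ref{cor:rels}, and you carry this out by checking that the displayed matrix kills the four relations and has rank~$4$. One small slip: the first four columns alone do \emph{not} force all coefficients of a vanishing row combination to zero, since the second row vanishes identically there; you need one of the last four columns to pin down the coefficient of row~$2$.
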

Finally, another linear algebra computation provides the following fact.
\begin{corollary}\label{cor:intform}
With the above notation, the bilinear form in $\R^4$ induced by the intersection product in $\NU(X)=\Nu(X)$ is given by the matrix
\[
\dfrac{1}{2}\begin{pmatrix}
    0&1&0&0\\1&-1&-1&-1\\0&-1&-2&0\\0&-1&0&-2
\end{pmatrix}.
\]
\end{corollary}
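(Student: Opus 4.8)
The plan is to transport the intersection pairing through the identification of Corollary \ref{cor:class}. Write $c_1,\dots,c_8\in\R^4$ for the eight columns of the matrix in Corollary \ref{cor:class}, so that the isomorphism $\NU(X)\simeq\R^4$ sends the boundary classes $A_1,\dots,B_{41}$ to $c_1,\dots,c_8$, and let $G\in\R^{8\times 8}$ be the symmetric matrix of intersection numbers recorded in Table \ref{tab:int}. Since the intersection product is a well-defined symmetric bilinear form on $\NU(X)=\Nu(X)$, what we must exhibit is the unique symmetric matrix $Q\in\R^{4\times 4}$ with $c_i^{\,T}Qc_j=G_{ij}$ for all $i,j$; the assertion is then that $Q$ coincides with the displayed matrix.

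To produce $Q$, first observe that the $c_i$ span $\R^4$, because the map of Corollary \ref{cor:class} is an isomorphism. Choosing four of them that form a basis, say the matrix $C$ whose columns are $c_1,c_2,c_4,c_8$ (attached to $A_1,A_2,A_4,B_{41}$), which one checks to be invertible, the sixteen equations $c_i^{\,T}Qc_j=G_{ij}$ with $i,j\in\{1,2,4,8\}$ read $C^{T}QC=G_{\mathrm{sub}}$, where $G_{\mathrm{sub}}$ is the corresponding $4\times 4$ principal submatrix of $G$. This determines $Q=(C^{-1})^{T}G_{\mathrm{sub}}C^{-1}$ uniquely, and a direct evaluation returns the matrix in the statement.

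It remains to verify that this $Q$ also reproduces every remaining pairing $c_i^{\,T}Qc_j=G_{ij}$. This consistency is automatic: the relations of Corollary \ref{cor:rels} span the kernel of the surjection $\R^8\to\NU(X)$ of Corollary \ref{cor:class}, and the intersection pairing descends to $\NU(X)$ by the very definition of numerical equivalence, so $G$ factors through this quotient and is represented there by a well-defined bilinear form, necessarily equal to $Q$. In practice the check is short rather than a tabulation of all thirty-six pairings: by Corollary \ref{cor:autgp} the Weyl group $W$ acts on $X$ by automorphisms, hence preserves the intersection form while permuting the boundary curves and their types, so it suffices to test $Q$ against one representative of each $W$-orbit of pairs. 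The only conceivable obstacle is bookkeeping, which the $W$-symmetry removes; there is no genuine difficulty beyond the arithmetic.
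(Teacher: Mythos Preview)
Your proposal is correct and follows the same approach as the paper, which simply records the statement as the outcome of ``another linear algebra computation'' without further detail. You have spelled out exactly what that computation is: transport the Gram matrix of Table~\ref{tab:int} through the isomorphism of Corollary~\ref{cor:class} by solving $C^{T}QC=G_{\mathrm{sub}}$ on a chosen basis, with consistency guaranteed because the intersection form descends to numerical equivalence.
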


\subsection{Anticanonical embedding of $X$}\label{ssec:antican}
From now on, we will denote the numerical class of a divisor $D$ in $X$ by the same symbol $D$. Recall also the construction of the contraction $q:X\simeq X'\to\P^2$ in Definition \ref{def:X'}.

The standard ramification formula for (weighted) blowups provides
\begin{equation} \label{eq:antK}
\begin{aligned}
    -K_X&=q^*(-K_{\P^2})-2B_{41}-2B_{12}-A_1=\\
        &=2A_2+4B_{41}+A_4+2B_{12}-2B_{41}-2B_{12}-A_1=\\
        &=A_2+A_3-(A_1-A_2+A_3-A_4)+2B_{41}=\\
        &=A_2+A_3+2B_{41}.
\end{aligned}
\end{equation}
Note that the last equality follows by Corollary \ref{cor:rels}. Note also that, since $2B_{41}$ is Cartier, it follows that $-K_X$ is Cartier.

By means of Proposition \ref{prop:int}, one may directly check that the anticanonical degree of every boundary curve is $1$, a property that follows also by our constructive description of $X$. In particular, we may also write
\begin{equation} \label{eq:antK2}
-K_X=\dfrac{1}{2}(A_1+A_2+A_3+A_4+B_{12}+B_{23}+B_{34}+B_{41}),
\end{equation}
since the right hand side of the equality has degree $1$ with each boundary curve.
We may now prove the following fact.
\begin{corollary}\label{cor:nefantiK}
The variety $X$ is a singular Del Pezzo surface of degree $4$.
\end{corollary}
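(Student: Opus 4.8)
The plan is to verify directly the two numerical properties characterising a degree-four del Pezzo surface — namely that $(-K_X)^2=4$ and that $-K_X$ is ample — and then to invoke the preceding theorem for the singularity type. For the degree I would exploit the symmetric expression (\ref{eq:antK2}), which writes $-K_X=\tfrac12(A_1+A_2+A_3+A_4+B_{12}+B_{23}+B_{34}+B_{41})$ as half the sum of the eight boundary curves, together with the observation recorded just before (\ref{eq:antK2}) that each boundary curve has anticanonical degree $1$. This gives immediately $(-K_X)^2=\tfrac12\sum_{D}(-K_X\cdot D)=\tfrac12\cdot 8=4$, the sum running over the eight boundary divisors; one may equally read this off Table \ref{tab:int} from the representation $-K_X=A_2+A_3+2B_{41}$.

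For ampleness I would apply the Nakai--Moishezon criterion on the normal projective surface $X$. Since $(-K_X)^2=4>0$, it remains to check that $-K_X\cdot C>0$ for every irreducible curve $C\subset X$. If $C$ is one of the eight boundary curves, then $-K_X\cdot C=1$ by the anticanonical degree. If instead $C$ is any other irreducible curve, then for each boundary divisor $D$ we have $D\cdot C\ge 0$, two distinct irreducible curves meeting nonnegatively, whence $-K_X\cdot C=\tfrac12\sum_{D}D\cdot C\ge 0$. The point to settle is strict positivity in this second case.

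To rule out equality I would argue numerically. Suppose $-K_X\cdot C=0$ for an irreducible $C$ not among the boundary curves; since all eight summands $D\cdot C$ are then nonnegative and sum to zero, each must vanish, so $D\cdot C=0$ for all eight boundary divisors $D$. But the classes of the boundary divisors generate $\NU(X)=\Nu(X)$, as noted above, and the intersection pairing on $\NU(X)$ is nondegenerate (by construction of numerical equivalence, equivalently by the Hodge index theorem). Hence $C$ would be numerically trivial, contradicting $H\cdot C>0$ for any ample class $H$. Therefore $-K_X\cdot C>0$ for every irreducible curve, and Nakai--Moishezon yields that $-K_X$ is ample.

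Combining the ampleness of $-K_X$ with $(-K_X)^2=4$ and with the fact, established in the preceding theorem, that $X$ has Picard number $4$ and exactly two singular points of type $\DA_1$ — which are canonical (du Val), hence compatible with the del Pezzo property — we conclude that $X$ is a singular del Pezzo surface of degree $4$. I expect the only genuinely delicate step to be the strict inequality $-K_X\cdot C>0$ for curves disjoint from the boundary; it is precisely the spanning of $\NU(X)$ by the boundary classes together with the nondegeneracy of the intersection form that makes this go through, which is why the intersection data of Table \ref{tab:int} and Corollaries \ref{cor:rels}--\ref{cor:intform} were assembled beforehand.
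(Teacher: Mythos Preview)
Your proposal is correct and follows essentially the same route as the paper: express $-K_X$ via (\ref{eq:antK2}) as half the sum of the boundary curves, observe that this forces any curve with nonpositive anticanonical degree to be a boundary curve, compute $(-K_X)^2=4$ from Proposition \ref{prop:int}, and invoke Nakai--Moishezon. Your treatment is in fact slightly more careful than the paper's: the paper only argues explicitly that $-K_X\cdot C<0$ is impossible before appealing to Nakai--Moishezon, whereas you also exclude $-K_X\cdot C=0$ for non-boundary $C$ by using that the boundary classes span $\NU(X)$ together with nondegeneracy of the intersection form --- a clean way to close that small gap.
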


 \begin{proof}
We claim first that $-K_X$ is nef. If this were not the case, it would have negative intersection with a curve $C$, and then (\ref{eq:antK2}) would imply that $C$ is a boundary curve, contradicting that boundary curves have anticanonical degree one.

We now conclude the proof by using Nakai--Moishezon Theorem, and noting that, again by Proposition \ref{prop:int}, one gets $(-K_X)^2=4$.
 \end{proof}

Furthermore, the anticanonical linear system is very ample and embeds $X$ into $\P^4$ as a complete intersection of two quadrics. This property, known for general Del Pezzo's of degree $4$, can be checked explicitly in our case. In the next statement we will denote by
\[p_1:=x-y+z,\quad p_2:=xy-y^2+xz\]
the homogeneous polynomials defining $A_4$ and $A_3$, respectively.

\begin{proposition}\label{prop:anticanembed}
    The anticanonical linear system on $X$ is globally generated and its corresponding morphism $\varphi:X\to\P^4$ factors through $q:X\to\P^2$ via \[\varphi':\P^2 \dashrightarrow \P^4,\quad(x:y:z)\longmapsto(xyp_1:xz^2:xp_2:yp_2:
zp_2).\] 
\end{proposition}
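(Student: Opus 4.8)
The plan is to prove two things: that the anticanonical system is globally generated (equivalently, that $\varphi$ is a morphism), and that it factors through $q$ via the explicit map $\varphi'$. We begin with the factorization, since it essentially dictates the statement. By Equation (\ref{eq:antK2}) we have $-K_X=\tfrac12\sum(\text{boundary curves})$, but the more useful identity for computation is (\ref{eq:antK}), namely $-K_X=A_2+A_3+2B_{41}$. The idea is to exhibit five explicit anticanonical sections, pull them back to $\P^2$, and check they are the five cubic forms $xyp_1$, $xz^2$, $xp_2$, $yp_2$, $zp_2$. Recall from the table in Proposition \ref{prop:CremonaWeyl} that $A_2=\{x=0\}$, $A_3=\{p_2=0\}$ with $p_2=xy-y^2+xz$, $A_4=\{p_1=0\}$ with $p_1=x-y+z$, and that $B_{41}=(0:0:1)$, $B_{12}=(1:1:0)$ are the two points blown up (with weights), while $A_1=(1:0:0)$ is the center of the smooth blowup. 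Since $-K_X$ is Cartier (as noted after (\ref{eq:antK})), the anticanonical sections correspond to global sections of $q^*\cO_{\P^2}(3)$ that vanish to the prescribed orders along the exceptional loci; concretely, to cubic forms on $\P^2$ vanishing at $A_1$ once and vanishing appropriately at $B_{41}$ and $B_{12}$ with the multiplicities dictated by the weighted blowups.

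The first main step is therefore to compute the sub-linear-system of $H^0(\P^2,\cO_{\P^2}(3))$ cut out by these vanishing conditions and verify it is exactly $\langle xyp_1,\ xz^2,\ xp_2,\ yp_2,\ zp_2\rangle$, a five-dimensional space matching $h^0(X,-K_X)=5$ for a degree-$4$ del Pezzo. I would read off the precise vanishing orders from the description of $X'$ in Remark \ref{remark:X'}, where the defining ideal is
\[
\cI=(xz^2,\ (x-y)yz,\ (x-y+z)y^2)\cO_{\P^2}
\]
presented as the intersection of $(y,z)$, $(xz,y^2)$, and $((x-y+z)y,z^2)$; these three ideals encode the infinitely near structure at $B_{41}=(0:0:1)$, the weighted blowup at $A_1=(1:0:0)$, and at $B_{12}=(1:1:0)$ respectively. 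The point is to match the ramification contributions $-2B_{41}-2B_{12}-A_1$ appearing in (\ref{eq:antK}) against the order of vanishing required of a cubic at each center, and to confirm that the five listed cubics are precisely those passing these local conditions. Each verification is a local monomial computation in suitable affine charts, checking that each of $xyp_1,xz^2,xp_2,yp_2,zp_2$ lies in the appropriate symbolic power while a general cubic does not.

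For global generation, the cleanest route is to observe that $-K_X$ is nef of positive self-intersection (Corollary \ref{cor:nefantiK}), so base-point-freeness will follow once we show the five pulled-back cubics have no common zero on $X$; equivalently, that their proper transforms plus exceptional behavior leave no base point. On $\P^2$ minus the three centers the five cubics clearly have no common zero (they include $xz^2$ and $yp_2$ and $zp_2$, whose common vanishing forces $x=0$ and then $y=z=0$, impossible), so any base point must sit over $A_1$, $B_{41}$, or $B_{12}$; one then checks in the local charts of $X'$ that the sections separate these exceptional loci, using that the weighted blowups were chosen exactly to resolve the base locus of the relevant pencils of conics (as in Remarks \ref{rem:r1r2} and \ref{rem:Xr1}). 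I expect the main obstacle to be the bookkeeping of vanishing orders at the two weighted centers: unlike the smooth blowup at $A_1$, the points $B_{41}$ and $B_{12}$ are blown up with nontrivial weights, so one must be careful translating the Cartier divisor $-K_X=q^*(-K_{\P^2})-2B_{41}-2B_{12}-A_1$ into the correct symbolic-power conditions on cubics and confirming the count gives exactly five sections rather than accidentally four or six. Once the dimension count and the local separation are pinned down, global generation and the stated factorization follow immediately, and together with Corollary \ref{cor:nefantiK} this completes the identification of $\varphi$ with the complete-intersection-of-two-quadrics embedding into $\P^4$.
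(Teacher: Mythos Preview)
Your approach is essentially the same as the paper's: identify $H^0(X,-K_X)$ via $q$ with the linear system of plane cubics satisfying local vanishing conditions at the three centers $A_1,B_{41},B_{12}$, verify that the five listed cubics form a basis, and then check global generation by showing that the indeterminacy locus of $\varphi'$ is precisely the scheme resolved by $q$ (the paper phrases the conditions geometrically --- cubics through $A_1$ and tangent to $A_2$, $A_4$ at $B_{41}$, $B_{12}$ --- and computes the localizations $\cJ_{A_1},\cJ_{B_{41}},\cJ_{B_{12}}$, matching them against Remark~\ref{remark:X'}). One small slip to fix when you carry out the computation: your attribution of the three primary components of $\cI$ to the centers is permuted. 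The ideal $(y,z)$ is the maximal ideal of $A_1=(1:0:0)$ (the \emph{smooth} blowup center), while $(xz,y^2)$ is supported at $B_{41}=(0:0:1)$ (weighted) and $((x-y+z)y,z^2)$ at $B_{12}=(1:1:0)$ (weighted); this matches the ramification $-A_1-2B_{41}-2B_{12}$ in \eqref{eq:antK}.
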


\begin{proof}
First of all, recall that the map $q:X\to \P^2$ blows up $A_1=(1:0:0)$ smoothly and $B_{12}=(1:1:0), B_{41}=(0:0:1)$ with suitable weights.  
Via this map we may identify the anticanonical linear system of $X$ with the linear system of cubics in $\P^2$ passing through $A_1=(1:0:0)$ and passing through $B_{12}=(1:1:0)$, $B_{41}=(0:0:1)$ tangent to the lines $A_4:x-y+z=0$, $A_2:x=0$, respectively. It is then a straightforward computation to check that $xyp_1,xz^2,xp_2,yp_2,
zp_2$ form a basis of that linear system. 

We denote by $\varphi':\P^2 \dashrightarrow \P^4$ the corresponding rational map, so that we get a factorization $\varphi=\varphi'\circ q$. It remains to show that $\varphi$ is globally generated, i.e. that $\varphi$ resolves the indeterminacies of $\varphi'$.

Note that the indeterminacy locus of $\varphi'$ is supported on the three points $A_1,B_{12}, B_{41}\in \P^2$. More precisely, denoting by $\cJ$ the ideal sheaf of the indeterminacy locus, its localizations on these three points are 
\[\cJ_{A_1}=(y/x,z/x)\cO_{A_1},\quad 
\cJ_{B_{41}}=(x/z,(y/z)^2)\cO_{B_{41}},\quad
\cJ_{B_{12}}=(p_1/y,(z/y)^2)\cO_{B_{12}}.
\] 
Therefore, such indeterminacies are precisely resolved by the blowup $q:X\to\P^2$, as shown in Remark \ref{remark:X'}, so the map $\varphi$ is a morphism. 
\end{proof}

\begin{remark}\label{rem:anticanembed}
   From this description of $\varphi$, we immediately infer that it is an embedding in $X\setminus (A_1\cup A_3\cup B_{12}\cup B_{41})$. In fact, since $q(A_1\cup A_3\cup B_{12}\cup B_{41})=A_3$, it is enough to prove that $\varphi'$ is an embedding in $\P^2\setminus A_3$. Since $A_3$ is the set of zeroes of the homogeneous polynomial $p_2$, we conclude by noting that $\varphi'$ composed with the linear projection $\varphi(X)\subset\P^4\dashrightarrow\P^2$ sending $(x_0:\cdots:x_4)\longmapsto(x_2:x_3:x_4)$ is the identity of $\P^2$. 
\end{remark}

\begin{remark}\label{rem:anticanaction}
The action of $W$ on $X$ extends to an action on the projective space $\P^4=\P(\HH^0(X,-K_X)^\vee)$. We may use Proposition \ref{prop:CremonaWeyl} to compute the images via $r_1^*,r_2^*$ of the chosen generators of $\HH^0(X,-K_X)$. In this way we compute the images of $r_1$ and $r_2$ in $\P^4$ written in the coordinates of our choice (Proposition \ref{prop:anticanembed}), which generate the image of $W$ into $\PGL(5)$. They are the projectivities associated to the matrices
\[
R_1=\begin{pmatrix}-1 & 1 & 1 & 0 & 0\\
0 & 1 & 0 & 1 & -1\\
0 & 0 & 1 & -1 & 1\\
0 & 0 & 0 & 0 & 1\\
0 & 0 & 0 & 1 & 0\end{pmatrix},\quad 
R_2=\begin{pmatrix}0 & 0 & 0 & -1 & 0\\
0 & 1 & 0 & 0 & 0\\
0 & -1 & 0 & -1 & 1\\
-1 & 0 & 0 & 0 & 0\\
-1 & 1 & 1 & 0 & 0\end{pmatrix}.
\]
\end{remark}

\begin{proposition}\label{prop:lines}
    The image via $\varphi$ of the eight boundary curves $A_i$, $B_{ij}$ are eight lines in $\P^4$. Each line of type $A$ (resp. $B$) meets exactly another line of type $A$ (resp. $B$) and two lines of type $B$ (resp. $A$).
\end{proposition}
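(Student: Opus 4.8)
The plan is to reduce everything to two facts already at our disposal: the anticanonical morphism $\varphi$ is a closed embedding with $\varphi^*\cO_{\P^4}(1)=-K_X$, and the mutual intersection numbers of the eight boundary curves are recorded in Table \ref{tab:int}, each boundary curve having anticanonical degree one.

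First I would argue that each boundary curve is sent to a line. Since $\varphi$ is an embedding, every boundary curve $C$ maps isomorphically onto $\varphi(C)$, and the degree of this image is $\deg\varphi(C)=\big(\varphi^*\cO_{\P^4}(1)\big)\cdot C=(-K_X)\cdot C=1$; an irreducible curve of degree one in $\P^4$ is a line. As $\varphi$ is injective and the eight boundary curves are pairwise distinct (Proposition \ref{prop:8bounddiv}), we obtain eight distinct lines.

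Next I would recast incidence of the image lines as intersection theory on $X$. Because $\varphi$ is injective, for distinct boundary curves $C,C'$ one has $\varphi(C)\cap\varphi(C')=\varphi(C\cap C')$, so the two image lines meet in $\P^4$ precisely when $C$ and $C'$ meet on $X$. For two distinct irreducible curves on the normal surface $X$, this is equivalent to $C\cdot C'>0$: every point of $C\cap C'$ contributes a strictly positive local intersection multiplicity, which at the two $\DA_1$-singularities is a positive half-integer — this is exactly the origin of the $\pm 1/2$ entries in Table \ref{tab:int}. Consequently the incidence pattern of the eight lines coincides with the graph of positive off-diagonal entries of the table.

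Finally I would read off the pattern. Within each type the positive entries pair the curves as $\{A_1,A_3\}$, $\{A_2,A_4\}$ and $\{B_{12},B_{34}\}$, $\{B_{23},B_{41}\}$, so each line of type $A$ meets exactly one other $A$-line and each line of type $B$ exactly one other $B$-line; and in every row the entries in the columns of the opposite type are positive in exactly two places, yielding the two cross-type intersections. I expect the only genuinely delicate point to be the equivalence between set-theoretic incidence and positivity of the (possibly half-integral) intersection numbers on the singular surface $X$ — one must be certain that these fractional numbers are honest sums of positive local contributions, hence cannot vanish while the curves meet nor be positive while they are disjoint; once that is secured, the remainder is bookkeeping on the table.
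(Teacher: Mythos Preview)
Your argument is correct in spirit and considerably cleaner than the paper's, which proceeds by explicit computation: the authors use the formula for $\varphi'$ in Proposition~\ref{prop:anticanembed} to write down the images in $\P^4$ of the five boundary curves whose $q$-images are not points, recover the remaining three as $R_2(A_2)$, $R_2(B_{23})$, $R_1(B_{41})$ via the matrices of Remark~\ref{rem:anticanaction}, and then tabulate the pairwise intersections of the resulting eight concrete lines. Your route bypasses all of this by reading the degrees from $(-K_X)\cdot C=1$ and the incidences from Table~\ref{tab:int}. What the explicit approach buys is independence from later results and, as a byproduct, the actual coordinates of the intersection points, which the paper uses in Remark~\ref{rem:quadrics}.

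There is, however, a logical wrinkle you should address. You take as given that $\varphi$ is a closed embedding, but in the paper this is Corollary~\ref{cor:anticanembed}, proved \emph{after} Proposition~\ref{prop:lines}; its injectivity step appeals precisely to the fact that the eight boundary curves have distinct images in $\P^4$ --- information supplied by the table produced in the proof of Proposition~\ref{prop:lines}. So within the paper's logical flow your argument is circular. The repair is not hard: the degree-one part needs only that $\varphi$ is finite, which follows from ampleness of $-K_X$ (Corollary~\ref{cor:nefantiK}); and for the incidence statement you could either invoke the classical fact that $|-K_X|$ is very ample on a del Pezzo surface of degree $\geq 3$ with at worst Du Val singularities, or argue directly using the partial embedding of Remark~\ref{rem:anticanembed} together with its $W$-translates. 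Either way, you should not treat the embedding as ``already at our disposal'' without justifying it independently of the statement you are proving.
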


\begin{proof}
Proposition \ref{prop:anticanembed} allows us to compute the images of the boundary curves different from $A_1$, $B_{41}$, $B_{12}$, showing that they are lines. The three remaining curves can then be computed, for instance, as $R_2(A_2)$, $R_2(B_{23})$, $R_1(B_{41})$. Once these lines are obtained, it is straightforward to describes their intersections.
The following table contains the final outcome of the above computations.
\begingroup
\setlength{\tabcolsep}{1pt} 
\renewcommand{\arraystretch}{1.3} 
\begin{table}[ht!!]
\resizebox{\textwidth}{!}{%
\begin{tabular}{|C||C|C|C|C|C|C|C|C|}
     \hline
     \varphi&A_1&A_2&A_3&A_4&B_{12}&B_{23}&B_{34}&B_{41}  \\\hline\hline
     A_1    &&&[e_0]&&&[e_{ 2 }]&[e_0\!+\!e_{ 2 }]&  \\\hline
     A_2    &&&&[e_3\!+\!e_4]&&&[e_3]&[e_4]  \\\hline
     A_3    &[e_0]&&&&[e_0\!+\!e_{ 1 }]&&&[e_{ 1 }]  \\\hline
     A_4    &&[e_3\!+\!e_4]&&&[e_{ 1 }\!-\!e_{ 2 }\!-\!e_3]&[e_{ 1 }\!-\!e_{ 2 }\!+\!e_4]&&  \\\hline
     B_{12} &&&[e_0\!+\!e_{ 1 }]&[e_{ 1 }\!-\!e_{ 2 }\!-\!e_3]&&&[e_0\!+\!e_{ 2 }\!+\!e_3]&  \\\hline
     B_{23} &[e_{ 2 }]&&&[e_{ 1 }\!-\!e_{ 2 }\!+\!e_4]&&&&[e_{ 1 }\!+\!e_4] \\\hline
     B_{34} &[e_0\!+\!e_{ 2 }]&[e_3]&&&[e_0\!+\!e_{ 2 }\!+\!e_3]&&&  \\\hline
     B_{41} &&[e_4]&[e_{ 1 }]&&&[e_{ 1 }\!+\!e_4]&&  \\\hline
\end{tabular}}
\end{table}
\endgroup
\end{proof}

\begin{corollary}\label{cor:anticanembed}
    The anticanonical morphism $\varphi:X\to \P^4$ is an embedding.
\end{corollary}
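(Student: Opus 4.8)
The plan is to prove that the complete anticanonical system separates points and separates tangent vectors, which by the standard very ampleness criterion is equivalent to $\varphi$ being a closed immersion. Recall that by Proposition \ref{prop:anticanembed} the system $|-K_X|$ is base point free, so $\varphi$ is a genuine morphism onto $\P^4$, and by Corollary \ref{cor:nefantiK} the pullback $\varphi^*\cO_{\P^4}(1)=-K_X$ is ample; in particular $\varphi$ has positive degree on every curve, so it contracts nothing and is finite. It therefore suffices to establish that $\varphi$ is injective (separates points) and that $d\varphi$ is injective at every point (separates tangent vectors).

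First I would exploit the $W$-equivariance to get the immersivity and local injectivity off a finite set. By Remark \ref{rem:anticanembed}, $\varphi$ is already a closed immersion on the dense open set $U:=X\setminus(A_1\cup A_3\cup B_{12}\cup B_{41})$. Since Remark \ref{rem:anticanaction} shows that $\varphi$ intertwines the $W$-action on $X$ with a linear $W$-action on $\P^4$, this property propagates to a closed immersion on every translate $\sigma(U)$, $\sigma\in W$. I would then compute the $W$-orbit of the reducible divisor $A_1\cup A_3\cup B_{12}\cup B_{41}$, which consists of the four divisors
\[
\begin{array}{ll}
A_1\cup A_3\cup B_{12}\cup B_{41}, & A_2\cup A_4\cup B_{12}\cup B_{23},\\
A_2\cup A_4\cup B_{34}\cup B_{41}, & A_1\cup A_3\cup B_{23}\cup B_{34},
\end{array}
\]
and use the intersection numbers of Table \ref{tab:int} to check that their common intersection is exactly the two points $P_1:=B_{12}\cap B_{34}$ and $P_2:=B_{23}\cap B_{41}$, which are precisely the two $\DA_1$ singularities of $X$. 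This shows $\bigcup_{\sigma\in W}\sigma(U)=X\setminus\{P_1,P_2\}$, so $\varphi$ is a local closed immersion, hence unramified and locally injective, away from these two points.

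It then remains to treat the two singular points and to rule out collisions between different charts. For the tangent spaces at $P_1,P_2$ I would pass to an \'etale or analytic chart presenting the $\DA_1$ point as a node $uv=w^2$ and compute $d\varphi$ directly in the coordinates of Proposition \ref{prop:anticanembed}, checking that the three dimensional Zariski tangent space injects into $T\P^4$; the lines $\varphi(B_{12}),\varphi(B_{34})$ (respectively $\varphi(B_{23}),\varphi(B_{41})$) through $\varphi(P_i)$ computed in Proposition \ref{prop:lines} already supply two independent tangent directions, so the computation reduces to producing a third. For global injectivity I would use the factorization $q=\pi\circ\varphi$ through the linear projection $\pi:\P^4\dashrightarrow\P^2$, $(x_0:\dots:x_4)\mapsto(x_2:x_3:x_4)$ of Remark \ref{rem:anticanembed}: a coincidence $\varphi(a)=\varphi(b)$ with image off the center $\langle e_0,e_1\rangle$ of $\pi$ forces $q(a)=q(b)$, confining $a,b$ to a common fibre of $q$, i.e. to one of the curves $A_1,B_{12},B_{41}$, each of anticanonical degree one and hence mapped isomorphically onto a line. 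A coincidence over the center is excluded because $\varphi^{-1}(\langle e_0,e_1\rangle)=q^{-1}(A_3)$, among whose components only $A_3$ maps into the center (again with degree one), while $A_1,B_{12},B_{41}$ meet the center only at the single points $[e_0],[e_0+e_1],[e_1]$ recorded in Proposition \ref{prop:lines}.

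The step I expect to be the main obstacle is the local analysis at the two $\DA_1$ points $P_1,P_2$: these are exactly the points the symmetric $W$-covering fails to reach, the surface is singular there, and verifying injectivity of $d\varphi$ on the three dimensional tangent space of a node is the one place where the clean geometric arguments must be supplemented by an explicit local computation. Everything else reduces either to the embedding on $U$ combined with the $W$-symmetry, or to the line configuration and intersection data established in the preceding results.
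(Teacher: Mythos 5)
Your route is the paper's own --- the closed immersion on $U=X\setminus(A_1\cup A_3\cup B_{12}\cup B_{41})$ from Remark \ref{rem:anticanembed}, propagated by the $W$-equivariance of Remark \ref{rem:anticanaction}, plus an injectivity check on the boundary configuration --- but you execute the covering step more carefully than the paper does. Your computation of the $W$-orbit of $A_1\cup A_3\cup B_{12}\cup B_{41}$ is correct ($r_1$ stabilizes this set and the orbit consists of the four unions you list), and Table \ref{tab:int} confirms that the translates $\sigma(U)$ cover exactly $X\setminus\{P_1,P_2\}$, where $P_1=B_{12}\cap B_{34}$ and $P_2=B_{23}\cap B_{41}$ are the two $\DA_1$ points. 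So the equivariance argument alone does not yield a local immersion at the two singular points, and your global injectivity argument via the projection $(x_0:\dots:x_4)\mapsto(x_2:x_3:x_4)$ is sound.

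The one genuine gap is the step you yourself flag: injectivity of $d\varphi$ on the three-dimensional Zariski tangent spaces at $P_1,P_2$ is announced but never carried out, and it genuinely requires a computation --- the two lines $\varphi(B_{23}),\varphi(B_{41})$ account for only a two-dimensional subspace (for $uv=w^2$ the two branches are the $u$- and $v$-axes, and neither sees $\partial_w$). The check does succeed: near $P_2$ the map $X\to X_e$ is an isomorphism, and on the singular toric chart one may take $u=x/z$, $w=y/z$, $v=y^2/(xz)$, so that $uv=w^2$; dividing the five generators of Proposition \ref{prop:anticanembed} by $z^3$ and then by $u$ (a local equation of $2B_{41}$) gives
\[
\bigl(\,w(u-w+1)\;:\;1\;:\;u(w-v+1)\;:\;w(w-v+1)\;:\;w-v+1\,\bigr),
\]
whose value at the origin is $[e_1+e_4]$ and whose Jacobian in $(u,v,w)$ has rank $3$; the point $P_1$ is then handled by the $W$-action. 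With this (or any equivalent local verification) inserted, your argument is complete, and it in fact supplies the justification for the assertion that $\varphi$ is ``locally an embedding at every point of $X$'' which the covering argument by itself does not deliver.
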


\begin{proof}
    We have already discussed in Remark \ref{rem:anticanembed} that $\varphi_{|X\setminus(A_1\cup A_3 \cup B_{12}\cup B_{41})}$ is an embedding. Since $\varphi$ is $W$-equivariant (Remark \ref{rem:anticanaction}), it must be an embedding at $\sigma(X\setminus(A_1\cup A_3 \cup B_{12}\cup B_{41}))$, for every $\sigma\in W$, hence it is locally an embedding at every point of $X$. Then it remains to show that $\varphi$ is injective, and this follows from the fact that it is an embedding on every $\sigma(X\setminus(A_1\cup A_3 \cup B_{12}\cup B_{41}))$ and that the set of boundary curves maps injectively to $\P^4$ by   Proposition \ref{prop:8bounddiv}.
\end{proof}

\begin{remark}\label{rem:quadrics} 
Another straightforward computation tells us what is the linear system of quadrics in $\P^4$ containing $\varphi(X)$. Denoting our chosen homogeneous coordinates in $\P^4$ by $(X_0:\dots:X_4)$, our linear system of quadrics is generated by
\[Q_1:(X_0-X_2)X_4+X_1(X_2-X_3)=0,\quad Q_2:(X_0-X_2)X_3+X_2(X_4-X_1)=0.\]
These are two quadric cones, whose vertices are, respectively, 
\[[e_1+e_4]=\varphi(B_{23}\cap B_{41})\,\mbox{ and }\,[e_0+e_{2}+e_3]=\varphi(B_{12}\cap B_{34}),\] 
i.e. the images of the two singular points of $X$. Note that the line in $\P^4$ joining these two points is contained in the two quadrics, being the strict transform of the line in $\P^2$ of equation $x-y=0$.

Now, since $\deg\varphi(X)=4$, we may conclude that
$$
X\simeq \varphi(X)=Q_1\cap Q_2.
$$
In particular, $X$ (which, is a singular Del Pezzo surface of degree $4$ by Proposition \ref{cor:nefantiK}) can be obtained as a degeneration of a smooth Del Pezzo surface, which is the complete intersection of two general quadrics in $\P^4$.
\end{remark}


\subsection{The Mori cone of $X$ and its contractions}\label{sec:birgeomX}

It is well known that the general Del Pezzo surface of degree $4$ has $(\Z/2\Z)^4$ as automorphism group and that its Mori cone is the cone generated by the classes of its sixteen $(-1)$-curves, 
whose group of symmetries is the Weyl group of type $\DD_5$. Some automorphism groups of singular Del Pezzo surfaces are known, particularly in the case in which they have precisely one $\DA_1$ singularity (cf. \cite{HKT13}, \cite{Ho96}). 

In our case, we may describe the Mori cone of $X$ as follows.

\begin{proposition}\label{prop:Moricone}
    The Mori cone of $X$ is generated by the classes of the boundary curves. It is the cone over a square antiprism, which has $8$ vertices and $16$ edges.
\end{proposition}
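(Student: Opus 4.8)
The plan is to prove that the Mori cone $\cNE{X}$ is generated by the eight boundary curves, and then to identify the resulting cone combinatorially as the cone over a square antiprism. The starting point is the observation, recorded just before this statement, that the divisor class group of $X$ is generated by the boundary divisors, so their numerical classes span $\Nu(X)\simeq\R^4$. Since $X$ is a Mori Dream Space (being a del Pezzo surface, cf.\ Theorem~\ref{thm:main} and \cite{BCHM}), its Mori cone is rational polyhedral, so it suffices to identify its extremal rays. First I would show that every boundary curve spans an extremal ray: the four curves $A_1,B_{12},B_{41}$ and (via the $W$-action) their orbit are contracted by the blowups constituting $q:X\to\P^2$, hence have negative self-intersection by Proposition~\ref{prop:int}, so they are extremal; and any curve of negative self-intersection on a surface spans an extremal ray of the Mori cone. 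The curves with nonnegative self-intersection among the boundary divisors can be handled by the $W$-action (Corollary~\ref{cor:autgp}), which permutes the boundary curves transitively within each type, so it is enough to check extremality for one representative of type $A$ and one of type $B$.

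The second task is to prove that these eight rays \emph{generate} the whole cone, i.e.\ that no other curve class lies outside the cone they span. Here I would use the anticanonical polarization: by equation~(\ref{eq:antK2}) every boundary curve has anticanonical degree exactly $1$, and by Corollary~\ref{cor:nefantiK} the surface $X$ is a del Pezzo surface of degree $4$, so $-K_X$ is ample. Any irreducible curve $C$ that is not one of the boundary curves meets the boundary (since the boundary is an ample, connected configuration) and can be pushed forward to $\P^2$ via $q$; writing the class of $C$ in terms of the generators of $\Nu(X)$ furnished by Corollary~\ref{cor:class}, one checks using the intersection form of Corollary~\ref{cor:intform} that the $-K_X$-minimal curve classes are precisely the boundary classes. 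Concretely, one shows that any extremal ray of $\cNE{X}$ either has negative self-intersection — in which case it must be one of the boundary curves, since those are the only negative curves on $X$ — or is nef; but a nef extremal ray would give a nontrivial fibration contradicting the explicit description of $X$ as a complete intersection of two quadrics in $\P^4$ with Picard number $4$. Thus the eight boundary classes exhaust the extremal rays.

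Finally I would identify the combinatorial type of the cone. With the explicit coordinates of Corollary~\ref{cor:class}, the eight boundary classes become eight explicit vectors in $\R^4$; the cone they generate is $4$-dimensional, so the combinatorial structure is read off from its section by a hyperplane, e.g.\ the affine slice cut out by $-K_X=\text{const}$ (which is meaningful since all eight generators have anticanonical degree $1$). That slice is a $3$-dimensional polytope with $8$ vertices, and the incidence pattern of its edges is dictated by which boundary curves meet in $X$: by Proposition~\ref{prop:lines} each line of type $A$ meets exactly one other line of type $A$ and two of type $B$ (and symmetrically), which is exactly the edge structure of a square antiprism — two squares (the four $A$-classes and the four $B$-classes) joined by $8$ slanted edges, giving $8$ vertices and $16$ edges. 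I would verify this by direct inspection of the intersection Table~\ref{tab:int}, reading an edge of the polytope precisely where two boundary curves have positive intersection number.

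The main obstacle I anticipate is the generation step, namely ruling out extremal rays not spanned by boundary curves. On a del Pezzo surface the ``negative curve'' argument controls the boundary classes cleanly, but one must genuinely exclude the possibility of a further extremal contraction; the cleanest route is to exploit the explicit model $X=Q_1\cap Q_2\subset\P^4$ from Remark~\ref{rem:quadrics} together with the $W$-symmetry, which constrains the Mori cone to be $W$-invariant and hence already forces its extremal rays to lie among the $W$-orbit of boundary classes. The combinatorial identification as a square antiprism, by contrast, is a mechanical check once the intersection numbers of Proposition~\ref{prop:int} and the meeting pattern of Proposition~\ref{prop:lines} are in hand.
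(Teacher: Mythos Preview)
Your proposal has two genuine gaps.

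\textbf{The generation step.} You assert that any extremal ray of $\cNE{X}$ is either spanned by a curve of negative self-intersection (hence ``one of the boundary curves, since those are the only negative curves on $X$'') or is nef and leads to a contradiction. But the claim that the boundary curves are the \emph{only} negative curves on $X$ is precisely the content of the proposition, and you give no argument for it. Likewise, a nef extremal ray on a del Pezzo surface of Picard number $4$ simply gives a fibration to $\P^1$; there is nothing about the model $Q_1\cap Q_2\subset\P^4$ that rules this out a priori (indeed, $X$ \emph{does} have such fibrations --- they correspond to the two square facets). The $W$-invariance of $\cNE{X}$ does not force extremal rays to lie in the $W$-orbit of the boundary classes either: a $W$-invariant cone can have many $W$-orbits of extremal rays. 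The paper proceeds differently and more directly: it lets $N$ be the cone on the eight classes, lists the facets of $N$, and for one representative in each $W$-orbit of facets exhibits an explicit contraction of $X$ (to $\P^2$, to $Q^2$, and twice to $\P^1$) whose relative cone is that facet. This gives $N^\vee\subset\Nef(X)$, hence $\cNE{X}\subset N$, and the reverse inclusion is immediate. Equivalently (Remark~\ref{rem:Moricone}), one writes down a nef divisor supporting each facet.

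\textbf{The antiprism edges.} Your criterion ``read an edge of the polytope precisely where two boundary curves have positive intersection number'' is backwards. From Table~\ref{tab:int}, for instance, $A_1\cdot A_3=1$ and $A_1\cdot B_{23}=1$, yet $A_1,A_3$ are diagonal in the $A$-square and $A_1,B_{23}$ are not joined by an edge; conversely $A_1\cdot A_2=A_1\cdot B_{12}=A_1\cdot B_{41}=0$ and these \emph{are} the edges through $A_1$. Two negative curves span an edge of $\cNE{X}$ when they can be contracted simultaneously, which here happens exactly when their intersection number is zero. The incidence pattern of Proposition~\ref{prop:lines} (each $A$ meets one $A$ and two $B$'s) records the \emph{non}-edges, and gives degree $3$ at each vertex, whereas an antiprism vertex has degree $4$. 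The paper instead reads off the facets directly from the explicit coordinates of Corollary~\ref{cor:class} (slicing by $y_1+y_2=1$), and then checks they support contractions as above.
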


\begin{proof}
We denote by $N\subset\NU(X)=\Nu(X)$ the cone generated by the classes of the boundary curves. Using the coordinates in $\NU(X)\simeq\R^4$ provided in Corollary \ref{cor:class} -- that we denote here by $(y_1,y_2,y_3,y_4)$ -- we note that the convex hull of the classes of boundary curves is contained in the hyperplane of equation $y_1+y_2=1$. So this polyhedron is equivalent to its projection via $(y_1,y_2,y_3,y_4)\longmapsto (y_2,y_3,y_4)$,  that we have represented in Figure \ref{fig:cones}. 

In particular, we see that the convex hull of the classes of boundary curves is a square antiprism, that has $8$ triangular faces (divided in $2$ $W$-orbits of $4$ elements) and $2$ ($W$-invariant) square faces, represented in Table \ref{tab:antiprism} below.

\begin{table}[ht!!]
    \centering
    \renewcommand{\arraystretch}{2} 
    \begin{tabular}{|c|c|}
        \hline
        \makecell{$\conv(A_1, B_{41}, B_{12}),\,\, \conv(A_2, B_{12}, B_{23}),$ \\ $\conv(A_3, B_{23}, B_{34}),\,\, \conv(A_4, B_{34}, B_{41})$} & 
        $\conv(A_1, A_2, A_3, A_4)$ \\
        \hline
        \makecell{$\conv(B_{12}, A_1, A_2),\,\, \conv(B_{23}, A_2, A_3),$ \\ $\conv(B_{34}, A_3, A_4), \,\,\conv(B_{41}, A_4, A_1)$} & 
        $\conv(B_{12}, B_{23}, B_{34}, B_{41})$ \\
        \hline
    \end{tabular}
    \vspace{0.3cm}
    \caption{\centering Faces of the square antiprism of boundary curves, grouped in $W$-orbits}\label{tab:antiprism}
\end{table}
\vspace{-0.3cm}

Now consider the corresponding facets of the cone $N$, that we denote by $\langle A_1,B_{41},B_{12}\rangle,\dots, \langle B_{12},B_{23},B_{34},B_{41}\rangle$. In order to show that $N=\cNE{X}$, we will prove that all the facets of $N$ are facets of $\cNE{X}$, by explicitly describing their associated contractions. Taking into account the action of $W$, it is enough to describe these contractions for one element in each $W$-orbit.  

By construction, $\langle A_1,B_{41},B_{12}\rangle$, corresponds to the contraction $q:X\to \P^2$, composition of $\pi_e:X\to X_e$ with the contraction $p:X_e\to\P^2$. In the second orbit we describe $\langle B_{12},A_1,A_2\rangle$, that corresponds to the composition of $\pi_e:X\to X_e$ with the contraction $p':X_e\to Q^2$ (see Remark \ref{rem:combiquot}).

In order to describe the facet $\langle B_{12},B_{23},B_{34},B_{41}\rangle$, we note that the contraction of $B_{12}$, $B_{41}$ sends $X$ to the blowup $\widetilde{P}_1$ of $\P^2$ along $A_1$, that has a contraction onto $\P^1$; since $B_{23},B_{34}\subset\P^2$ are lines passing by $A_1$, it follows that their strict transforms into $\widetilde{P}_1$ are fibers of the contraction to $\P^1$. In other words, the face $\langle B_{12},B_{23},B_{34},B_{41}\rangle$ corresponds to a contraction of $X$ to $\P^1$.

Finally, in the case of the face $\langle A_1,A_2,A_3,A_4\rangle$, we note that the contraction of $A_1$ in $X$ is a conic bundle over $\P^1$. It is indeed the family of conics in $\P^2$ passing by $B_{12}$ and $B_{41}$ tangent to $A_4,A_2$, respectively. Since $A_2+A_4$ and $A_3$ are two conics of this family, it follows that this face corresponds to another contraction of $X$ to $\P^1$. This concludes the proof.
\end{proof}

\begin{remark}\label{rem:Moricone}
    Instead of referring to a explicit description of the contraction of $X$, one may conclude the proof of the above statement by showing nef divisors vanishing on the facets of $N$. This can be done with the following list of divisors, written in correspondence with the faces listed in Table \ref{tab:antiprism}.
    \begin{table}[ht!!]
    \centering
    \renewcommand{\arraystretch}{2} 
    \begin{tabular}{|c|c|}
        \hline
        \makecell{$A_1+A_3+2B_{41}+2B_{12},\,\, A_2+A_4+2B_{12}+2B_{23},$ \\ $A_1+A_3+2B_{23}+2B_{34},\,\, A_2+A_4+2B_{34}+2B_{41}$} & 
        $B_{12}+B_{23}+B_{34}+B_{41}$ \\
        \hline
        \makecell{$ A_1+A_2+B_{23}+B_{41},\,\, A_2+A_3+B_{12}+B_{34},$ \\ $A_3+A_4+B_{23}+B_{41}, \,\,A_4+A_1+B_{12}+B_{34}$} & 
        $A_1+A_2+A_3+A_4$\\
        \hline
    \end{tabular}
\end{table}

    Note that the above divisors are nonnegative linear combinations of the boundary divisors; therefore, as discussed in the Proof of Corollary \ref{cor:nefantiK}, it is enough to check that the intersection of these divisors with the boundary divisors are nonnegative, for which it suffices to use the intersection matrix presented in Proposition \ref{prop:int}. Moreover, this computation also shows that the above divisors support the claimed facets of $N$. The advantage of this approach is that the classes of the divisors above provide a set of generators of the extremal rays of $\Nef(X)$. We have represented (an affine section of) the Nef cone of $X$ in Figure \ref{fig:cones}.
\end{remark}

\begin{figure}[ht!!]
    \centering
    \includegraphics[width=0.4\textwidth]{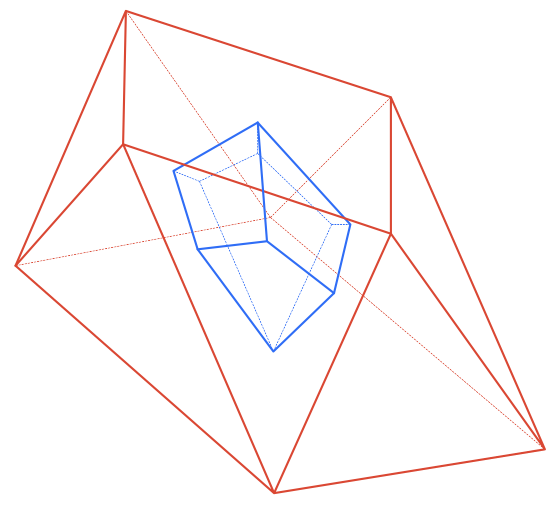}
    \caption{\centering Affine sections of the $\cNE{X}$ and $\Nef(X)$, represented in red and blue, respectively}
    \label{fig:cones}
\end{figure}
We conclude the section by proving that the only automorphisms of $X$ come from elements of the Weyl group $W$.
\begin{corollary}\label{cor:autgp2}
    The automorphism group of $X$ is isomorphic to $W$.
\end{corollary}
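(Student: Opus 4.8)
The plan is to realize $\Aut(X)$ inside $\PGL(5)$ and bound its order by $8=|W|$. Since every automorphism preserves the canonical class, $\Aut(X)$ acts linearly on $\HH^0(X,-K_X)$, and the very ample anticanonical embedding $\varphi\colon X\hookrightarrow\P^4$ of Corollary \ref{cor:anticanembed} is $\Aut(X)$-equivariant; this gives an inclusion $\Aut(X)\hookrightarrow\PGL(5)$ whose image preserves $\varphi(X)=Q_1\cap Q_2$ (Remark \ref{rem:quadrics}). Since $W\subseteq\Aut(X)$ by Corollary \ref{cor:autgp} (acting through the matrices $R_1,R_2$ of Remark \ref{rem:anticanaction}), it suffices to prove the reverse inclusion, for which I would show $|\Aut(X)|\le 8$.

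Every $g\in\Aut(X)$ permutes the eight boundary curves, whose images under $\varphi$ are the eight lines of Proposition \ref{prop:lines}. Moreover $g$ preserves the \emph{type}: the self-intersection pairing on $\Nu(X)$ is $g$-invariant, and by Proposition \ref{prop:int} one has $A_i^2=-1$ while $B_{ij}^2=-1/2$, so an $A$-curve can never be carried to a $B$-curve (equivalently, the $B$-curves are exactly those passing through the two singular points of $X$, the vertices of the cones $Q_1,Q_2$ of Remark \ref{rem:quadrics}, and $g$ preserves $\Sing(X)$). This produces a homomorphism $\rho\colon\Aut(X)\to S$, where $S$ is the group of type-preserving symmetries of the incidence graph of the boundary curves recorded in Proposition \ref{prop:int}.

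First I would bound $|S|$. Reading off Proposition \ref{prop:int}, the $A$–$B$ incidences assemble into a single octagon
\[A_1,\ B_{34},\ A_2,\ B_{41},\ A_3,\ B_{12},\ A_4,\ B_{23},\]
while the four remaining edges $A_1A_3$, $A_2A_4$, $B_{12}B_{34}$, $B_{23}B_{41}$ join the antipodal vertices of this octagon. Hence a type-preserving symmetry is precisely a symmetry of the octagon respecting the $2$-coloring ($A$'s in even positions, $B$'s in odd positions), and any such symmetry automatically permutes the antipodal diagonals; these symmetries form the dihedral group of order $8$, so $|S|=8$. (That $\rho(W)=S$ follows as well, since $W$ acts effectively on the boundary divisors.)

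It remains to prove $\rho$ injective, which I expect to be the main obstacle. If $g\in\Aut(X)$ fixes every boundary line, then it fixes each point where two of them meet; by the table in Proposition \ref{prop:lines} these fixed points include the five coordinate points $[e_0],\dots,[e_4]$ together with $[e_0+e_1]$, $[e_0+e_2]$, $[e_3+e_4]$, $[e_1+e_4]$. Fixing $[e_0],\dots,[e_4]$ forces $g=\diag(\lambda_0,\dots,\lambda_4)$, and the four extra points impose $\lambda_0=\lambda_1$, $\lambda_0=\lambda_2$, $\lambda_3=\lambda_4$, $\lambda_1=\lambda_4$, whence all $\lambda_i$ coincide and $g=\id$. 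Thus $\rho$ is injective, so $|\Aut(X)|=|\rho(\Aut(X))|\le|S|=8=|W|$; together with $W\subseteq\Aut(X)$ this forces $\Aut(X)=W$. The delicate points to get right are the combinatorial identification of the octagonal incidence pattern (pinning $|S|$ down to exactly $8$) and checking that the forced fixed points genuinely constitute a projective frame of $\P^4$, so that no nontrivial projectivity fixes all boundary lines.
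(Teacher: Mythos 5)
Your proof is correct, and it follows the same basic strategy as the paper --- reduce to the permutation action of $\Aut(X)$ on the eight boundary curves, observe that the type ($A$ or $B$) must be preserved, and count exactly $8$ type-preserving symmetries of the resulting configuration --- but it packages the combinatorics differently and, more importantly, it proves one step that the paper's argument leaves implicit. The paper works entirely in $\NU(X)$: an automorphism stabilizes $\cNE{X}$, hence permutes the extremal rays, which form a square antiprism whose base-preserving symmetry group has order $8$. You instead transport everything to $\P^4$ via the anticanonical embedding and read the same order-$8$ count off the incidence graph of the eight lines of Proposition \ref{prop:lines} (your identification of this graph as an octagon with the four antipodal diagonals is consistent with Table \ref{tab:int}, and type-preservation forces the octagon edges --- the $A$--$B$ incidences --- to be preserved, so the bound $|S|=8$ is clean). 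The genuine added value of your version is the injectivity of $\rho:\Aut(X)\to S$: the paper concludes surjectivity of $W\to\Aut(X)$ from the count of antiprism symmetries without explicitly excluding an automorphism that acts trivially on the boundary configuration, whereas you rule this out by noting that such an automorphism, viewed in $\PGL(5)$, fixes the nine listed intersection points, which contain a projective frame of $\P^4$ and hence force the identity. So your argument is a complete and slightly more careful variant of the paper's; the paper's route has the advantage of never leaving $\NU(X)$ and of tying the count directly to the Mori cone computed in Proposition \ref{prop:Moricone}.
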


\begin{proof}
    Note first that we have a natural action of $W$ on $X$, which is faithful by Corollary \ref{cor:autgp}. We will show that the homomorphism $W\to \Aut(X)$ is surjective.

Given an automorphism $g\in\Aut(X)$, we consider the induced linear automorphism of $\NU(X)$, which stabilizes the effective cone $\cNE{X}$, sending extremal rays to extremal rays.

By Proposition \ref{prop:Moricone}, the extremal rays of $\cNE{X}$ are generated by the classes of the boundary curves and, by intersection-theoretical reasons, $g$ must preserve the type ($A$ or $B$) of the rays. It then follows that $g$ induces an automorphism of the square antiprism stabilizing its two bases. We conclude by noting that there exist only $8$ symmetries of the antiprism satisfying this property.
\end{proof}


\bibliographystyle{plain}
\bibliography{bibliomin}


\end{document}